\newtheorem{theorem}{Theorem}[section]
\newtheorem{corollary}[theorem]{Corollary}
\newtheorem{lemma}[theorem]{Lemma}
\newtheorem{proposition}[theorem]{Proposition}
\begin{document}

\title{\bf On the minimum cut-sets of the power graph of a finite cyclic group}
\author{Sanjay Mukherjee \and Kamal Lochan Patra \and Binod Kumar Sahoo}
\maketitle

\begin{abstract}
The power graph $\mathcal{P}(G)$ of a finite group $G$ is the simple graph with vertex set $G$, in which two distinct vertices are adjacent if one of them is a power of the other. For an integer $n\geq 2$, let $C_n$ denote the cyclic group of order $n$ and let $r$ be the number of distinct prime divisors of $n$. The minimum cut-sets of $\mathcal{P}(C_n)$ are characterized in \cite{cps} for $r\leq 3$. In this paper, for $r\geq 4$, we identify certain cut-sets of $\mathcal{P}(C_n)$ such that any minimum cut-set of $\mathcal{P}(C_n)$ must be one of them.\\

\noindent {\bf Key words:} Power graph, Cut-set, Cyclic group, Euler's totient function \\
{\bf AMS subject classification.} 05C25, 05C40
\end{abstract}

\section{Introduction}

Let $\Gamma$ be a simple graph with vertex set $V.$ A subset $X$ of $V$ is called a {\it cut-set} of $\Gamma$ if the induced subgraph of $\Gamma$ with vertex set $V\setminus X$ is  disconnected. A cut-set $X$ of $\Gamma$ is called a {\it minimum cut-set} if $|X|\leq |Y|$ for any cut-set $Y$ of $\Gamma$. If $V=A\cup B$ for two nonempty disjoint subsets $A, B$ of $V$ such that there is no edge of $\Gamma$ containing vertices from both $A$ and $B$, then we say that $A\cup B$ is a {\it separation} of $\Gamma$.
Note that $\Gamma$ is disconnected if and only if there exists a separation of $\Gamma$.

\subsection{Power graphs}

There are various graphs associated with groups which have been studied in the literature, e.g., Cayley graphs, commuting graphs. The notion of directed power graph of a group was introduced by Kelarev and Quinn in \cite{ker1}. The underlying undirected graph, simply termed as the `power graph' of that group, was first considered by Chakrabarty et al. in \cite{ivy}. Several researchers have then studied both the directed and undirected power graphs of groups from different viewpoints. We refer to the survey papers \cite{AKC, KSCC} and the references therein for more on these graphs.

The {\it power graph} of a finite group $G$, denoted by $\mathcal{P}(G)$, is the simple graph with vertex set $G$, in which two distinct vertices are adjacent if one of them is a power of the other. Since the identity element of $G$ is adjacent to all other vertices, $\mathcal{P}(G)$ is a connected graph of diameter at most two. Chakrabarty et al. proved that $\mathcal{P}(G)$ is a complete graph if and only if $G$ is a cyclic group of prime power order \cite[Theorem 2.12]{ivy} .

Cameron and Ghosh \cite{cam-1} and Mirzargar et. al \cite{mir, mir-1} have studied the isomorphism problem associated with finite groups and their power graphs. Feng et al. \cite{FMW-1} described the full automorphism group of the power graph of a finite group. Kirkland et al. \cite{kirkland} derived explicit formulas concerning the complexity (that is, number of spanning trees) of power graphs of various finite groups. Graph parameters such as chromatic number \cite{MaFeng2015,shitov}, metric dimension \cite{FMW}, strong metric dimension \cite{MaFengWang2018, Ma} etc. of power graphs of finite groups have also been studied in the literature. It was proved in \cite[Theorem 5]{dooser} and \cite[Corollary 2.5]{FMW} that the power graph of a finite group is perfect, in particular, the clique number and the chromatic number coincide. Explicit formula for the clique number of the power graph of a finite cyclic group was obtained in \cite[Theorem 2]{mir} and \cite[Theorem 7]{dooser}.

For a given finite group $G$, characterizing the minimum cut-sets of $\mathcal{P}(G)$ is an interesting problem. It was proved in \cite[Theorem 1.3]{cur} and \cite[Corollary 3.4]{cur-1} respectively that, among all finite groups of order $n$, the power graph $\mathcal{P}(C_n)$ of the cyclic group $C_n$ of order $n$ has the maximum number of edges and has the largest clique. It is thus expected that the size of a minimum cut-set of $\mathcal{P}(C_n)$ would be larger comparing to the size of the minimum cut-set of the power graph of a noncyclic group of order $n$. A few cut-sets of $\mathcal{P}(C_n)$ were identified in \cite{cps, cps-2, panda} which appeared to be minimum cut-sets for some values for $n$. In this paper, we identify certain cut-sets of $\mathcal{P}(C_n)$ such that one of them must be a minimum cut-set for any given $n$.

\subsection{Cut-sets of $\mathcal{P}(C_n)$}

Throughout the paper, $n\geq 2$ is an integer and $r$ denotes the number of distinct prime divisors of $n$. We write the prime power factorization of $n$ as
$$n=p_1^{n_1}p_2^{n_2}\cdots p_r^{n_r},$$
where $n_1,n_2,\ldots, n_r$ are positive integers and $p_1,p_2,\ldots,p_r$ are primes with $p_1<p_2<\cdots <p_r$. For a positive integer $m$, we denote by $[m]$ the set $\{1,2,\dots,m\}$.

For every positive divisor $d$ of $n$, there is a unique (cyclic) subgroup of $C_n$ of order $d$. It follows that two distinct elements $x,y$ of $C_n$ are adjacent in $\mathcal{P}(C_n)$ if and only if $o(x)\mid o(y)$ or $o(y)\mid o(x)$, where $o(z)$ denotes the order of an element $z\in C_n$. The lattice of all subgroups of $C_n$ with respect to inclusion is isomorphic to the lattice of all divisors of $n$ with respect to divisibility. If $H_1,H_2,\ldots ,H_k$ are subgroups of $C_n$, then the number of elements in the intersection $H_1\cap H_2\cap\ldots\cap H_k$ is equal to the greatest common divisor of the integers $|H_1|,|H_2|,\ldots ,|H_k|$.

For every positive divisor $d$ of $n$, we denote by $E_d$ the set of all elements of $C_n$ whose order is $d$ and by $S_d$ the set of all elements of $C_n$ whose order divides $d$.
Then $S_d$ is the unique (cyclic) subgroup of $C_n$ of order $d$ and $E_d$ is precisely the set of generators of $S_d$. Thus $\left\vert S_d\right\vert =d$ and $\left\vert E_d\right\vert =\phi(d)$, where $\phi$ is the Euler's totient function. Any two distinct elements of $E_d$ are adjacent in $\mathcal{P}(C_n)$. Any cut-set of $\mathcal{P}(C_n)$ must contain the sets $E_n$ and $E_1$, as each element from these two sets is adjacent with all other vertices of $\mathcal{P}(C_n)$.

\subsubsection*{\underline{Cut-sets of the first type}}

Let $r\geq 2$, $a\in [r]$ and $s\in [n_a]$. We denote by $Q_a^s$ the union of the subgroups $S_{\frac{n}{p_ip_a^s}}$ of $C_n$, where $i\in [r]\setminus\{a\}$. Define the subset $Z_a^s$ of $C_n$ by
$$Z_a^s:=E_n\cup E_{\frac{n}{p_a}}\cup E_{\frac{n}{p_a^2}}\cup\ldots\cup E_{\frac{n}{p_a^{s-1}}}\cup Q_a^s.$$
In Section \ref{sec-Z-a-s}, we prove that $Z_a^s$ is a cut-set of $\mathcal{P}(C_n)$.

\subsubsection*{\underline{Cut-sets of the second type}}

Let $r\geq 3$, $a,b\in [r]$ with $a\neq b$, $s\in [n_a]$ and $t\in [n_b]$. We denote by $K_{a,b}^{s,t}$ the set of all the nongenerators of the cyclic subgroup $S_{\frac{n}{p_a^s p_b^t}}$ of $C_n$ and by $H_{a,b}^{s,t}$ the union of the mutually disjoint sets $E_{\frac{n}{p_a^ip_b^j}}$, where $0\leq i\leq s$, $0\leq j\leq t$ and $(i,j)\neq (s,t)$. Define the subset $X_{a,b}^{s,t}$ of $C_n$ by
$$X_{a,b}^{s,t}:=H_{a,b}^{s,t}\bigcup K_{a,b}^{s,t}.$$
In Section \ref{sec-X-a-b-s-t}, we prove that $X_{a,b}^{s,t}$ is a cut-set of $\mathcal{P}(C_n)$.

\subsection{Main result}

If $r=1$, then $n$ is a prime power and so $\mathcal{P}(C_n)$ is a complete graph. In this case, there is no cut-set of $\mathcal{P}(C_n)$.

Suppose that $r=2$. Then $n=p_1^{n_1}p_2^{n_2}$. If $p_1 \geq 3$, then $2\phi(p_1) >p_1$ and so $Z_2^1$ is the only minimum cut-set of $\mathcal{P}(C_n)$ by \cite[Proposition 4.4]{cps}.
If $p_1=2$, then it follows from the proof of \cite[Theorem 1.3(iii)]{cps} that the number of distinct minimum cut-sets of $\mathcal{P}(C_n)$ is $n_2$, namely the cut-sets $Z_2^s$ for $s\in [n_2]$.

Suppose that $r=3$. Then $n=p_1^{n_1}p_2^{n_2}p_3^{n_3}$. If $p_1\geq 3$, then $2\phi(p_1p_2) >p_1p_2$ and so $Z_3^1$ is the only minimum cut-set of $\mathcal{P}(C_n)$ by \cite[Proposition 4.4]{cps}. If $p_1=2$, then it follows from the proof of \cite[Theorem 1.5]{cps} that $Z_3^{n_3}$ is the only minimum cut-set of $\mathcal{P}(C_n)$.

For $r\geq 4$, we study the possible minimum cut-sets of $\mathcal{P}(C_n)$ and prove the following theorem in Section \ref{sec-main-proof}.

\begin{theorem}\label{main}
Let $r\geq 4$ and $X$ be a minimum cut-set of $\mathcal{P}(C_n)$. Then $X=Z_r^1$; or $X=Z_a^{n_a}$ for some $a\in [r]$ with $n_a\geq 2$; or $X=X_{a,b}^{s,t}$ for some $a,b\in [r]$ with $a\neq b$, $s\in [n_a]$ and $t\in [n_b]$.
\end{theorem}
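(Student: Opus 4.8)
The strategy is to start from an arbitrary minimum cut-set $X$ of $\mathcal{P}(C_n)$ and progressively pin down its structure until it is forced to coincide with one of the listed cut-sets. Since $\mathcal{P}(C_n)\setminus X$ is disconnected, fix a separation $A\cup B$ of the induced subgraph on $G\setminus X$. I would first record the basic constraints: $E_1\cup E_n\subseteq X$; and for each proper divisor $d$ of $n$, the clique $E_d$ either lies entirely in $X$, or meets only one of $A,B$. The key combinatorial object is the set $D$ of proper nontrivial divisors $d$ of $n$ with $E_d\not\subseteq X$; partition $D=D_A\sqcup D_B$ according to which side $E_d$ meets. Adjacency in $\mathcal{P}(C_n)$ is governed by divisibility of orders, so the crucial observation is: if $d\in D_A$ and $e\in D_B$, then $d\nmid e$ and $e\nmid d$, and moreover \emph{every} common multiple and common divisor of $d$ and $e$ that is a proper nontrivial divisor of $n$ must have its $E$-set inside $X$. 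In particular $\gcd(d,e)$ and (when it is a proper divisor of $n$) $\mathrm{lcm}(d,e)$ contribute $\phi(\gcd(d,e))$ etc. to $|X|$. This is the engine that converts "the cut-set is small" into strong divisibility restrictions on $D_A$ and $D_B$.

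Next I would run the optimization. Writing $|X| = n - \sum_{d\in D}\phi(d)$ (using $\sum_{d\mid n}\phi(d)=n$), minimality of $X$ is equivalent to maximality of $\sum_{d\in D}\phi(d)$ subject to the structural constraint above, namely that $D_A$ and $D_B$ are two nonempty "antich-type" families with the gcd/lcm closure landing in the complement. The natural candidates for $D$ are exactly those that produce the cut-sets $Z_r^1$, $Z_a^{n_a}$, and $X_{a,b}^{s,t}$: for $Z_a^s$ one takes $D_A=\{\,n/p_a^i : 1\le i\le s\,\}$-related divisors on one side and the divisors passing through some $n/(p_ip_a^s)$ on the other; for $X_{a,b}^{s,t}$ one takes the two "arms" $\{n/p_a^i\}$ and $\{n/p_b^j\}$ of the grid of divisors of $n/(p_a^s p_b^t)$ refined appropriately. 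I would show, via a sequence of exchange/rearrangement arguments, that any $D$ realizing a cut-set can be modified without decreasing $\sum_{d\in D}\phi(d)$ until it has one of these canonical shapes; the hypothesis $r\ge 4$ enters to rule out degenerate configurations and to guarantee that "spreading out" the divisor sets among at least four primes is never beneficial. Comparing the resulting totals against each other, and using elementary inequalities for $\phi$ on divisors of $n$ (e.g. $\phi(n/p_ip_j)$ versus $\phi(n/p_i^k)$), one sees that the optimum is attained precisely by $Z_r^1$, by some $Z_a^{n_a}$ with $n_a\ge 2$, or by some $X_{a,b}^{s,t}$.

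The main obstacle, as I see it, is the exchange argument in the second stage: showing that an arbitrary legal $D_A\sqcup D_B$ can be transformed into canonical form without loss. The difficulty is that the feasible region is not simply "antichains" — it is governed by the subtle requirement that the gcd- and lcm-closure of $D_A\cup D_B$ (restricted to proper divisors of $n$) be disjoint from $D$, and there are many incomparable ways two divisor-families can interact across $r\ge 4$ primes. I would handle this by first proving that one may assume each of $D_A,D_B$ is of the form "all proper nontrivial divisors of a fixed divisor $m_A$ (resp. $m_B$) of $n$ that are divisible by a fixed divisor $\ell_A$ (resp. $\ell_B$)" — i.e. an interval in the divisor lattice — via a local move that replaces $E_d\subseteq X$ decisions one prime-exponent at a time, each move provably non-decreasing in $\sum\phi$ because of the multiplicativity of $\phi$ and the fact that $2\phi(p)>p$ fails only for $p=2$. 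Once both sides are lattice intervals, the constraint collapses to a clean statement about the "width" of the complementary interval, and the remaining optimization over the finitely many shapes is a direct (if lengthy) computation, which I would organize as a handful of lemmas in Sections \ref{sec-Z-a-s} and \ref{sec-X-a-b-s-t} culminating in the case analysis of Section \ref{sec-main-proof}.
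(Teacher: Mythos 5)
Your setup is sound and matches the paper's starting point: fixing a separation $A\cup B$, using the fact that each clique $E_d$ lies wholly in $X$, $A$ or $B$ (Proposition \ref{dist}), and observing that for $d$ on the $A$-side and $e$ on the $B$-side neither divides the other, so that $\gcd(d,e)$ (indeed the whole subgroup $S_{\gcd(d,e)}$, Proposition \ref{useful}) is absorbed into $X$. Recasting minimality as maximizing $\sum_{d\in D}\phi(d)$ over cross-incomparable families $D_A\sqcup D_B$ is also a correct and equivalent formulation. But the entire quantitative core of the theorem is left as an unproven ``exchange/rearrangement argument,'' and the sketch you give for it has two concrete defects. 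First, the proposed normal form --- both $D_A$ and $D_B$ being intervals in the divisor lattice --- is not satisfied by the extremal configurations the theorem asserts: for $X=X_{a,b}^{s,t}$ one side is the single divisor $\frac{n}{p_a^sp_b^t}$, but the other side is the set of \emph{all} divisors of $n$ incomparable to it, which is the complement of a down-set union an up-set and is not an interval. So a reduction ``to intervals without decreasing $\sum\phi$'' either is false or steers you away from the very optima you must identify. Second, even granting non-decreasing exchange moves, they would only pin down the minimum \emph{value}; Theorem \ref{main} asserts that every minimum cut-set \emph{equals} one of the listed sets, which requires strict inequalities for non-canonical configurations (or an argument that $X$ contains a canonical cut-set, forcing equality by minimality). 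Your plan does not address this, and the blanket justification ``each move is non-decreasing because $2\phi(p)>p$ fails only for $p=2$'' hides exactly the delicate cases where the comparison is tight (e.g.\ the configuration $(p_a,p_b,n_b)=(2,3,1)$, or the need for $n_r=1$ when two of the cliques $E_{n/p_i}$ lie in $X$), which genuinely require separate arguments.

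For comparison, the paper's proof does not optimize over all admissible families $D$ directly. It tracks only the location of the $r$ maximal cliques $E_{n/p_i}$, forming the index sets $U,V,W$, proves $|W|\le 2$ and the rigidity statement that if both $U,V\neq\emptyset$ then $W=\emptyset$ and one of them is $\{r\}$ (by exhibiting explicit cheaper cut-sets such as $X_{r-1,r}^{1,1}$, $Z_r^1$, $Z_s^1$ and deriving contradictions with minimality through the $\phi$-inequalities of Section 2), and then in each of the cases $|W|=0,1,2$ shows that $X$ must \emph{contain} one of $Z_r^1$, $Z_a^{n_a}$, $X_{a,b}^{s,t}$, whence equality. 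If you want to pursue your global-optimization route, you would need to (i) replace the interval normal form by one compatible with the true optima (one side a single divisor or a chain $\{n/p_a^k\}$, the other side maximal among incomparables), and (ii) make every exchange strict unless the configuration is already canonical; at that point you would essentially be reproving the paper's case analysis in different notation.
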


\section{Euler's totient function}

We continue with the notation introduced in the previous section. We have $\phi\left(p^k\right)=p^{k-1}(p-1)=p^{k-1}\phi(p)$ for any prime $p$ and positive integer $k$. Since $\phi$ is a multiplicative function, we have  $\phi(n)= \phi\left(p_1^{n_1}\right)\phi\left(p_2^{n_2}\right) \cdots \left(p_r^{n_r}\right)$. For $i,k\in [r]$ with $i\leq k$, we have
\begin{equation}\label{eqn-1}
\left|E_{\frac{n}{p_i}} \right|=\phi\left(\frac{n}{p_i}\right)\geq \phi\left(\frac{n}{p_k}\right)=\left|E_{\frac{n}{p_k}} \right|.
\end{equation}
Let $I$ be a nonempty subset of $[r]$ with $|I|=t$. Then by \cite[Lemma 3.1]{cps-1}, we have
\begin{equation}\label{eqn-2}
(t+1)\phi\left(\underset{i\in I}\prod p_i\right) \geq \underset{i\in I}\prod p_i,
\end{equation}
where equality holds if and only if $(t,I)=(1,\{1\})$ with $p_1=2$ or $(t,I)=(2,\{1,2\})$ with $p_1=2$, $p_2=3$.
By expanding $\phi\left(\underset{i\in I}\prod p_i \right)=\underset{i\in I}\prod  (p_i -1)$, it follows that
\begin{equation}\label{eqn-3}
\underset{i\in I}\prod p_i -\phi\left(\underset{i\in I}\prod p_i \right)= \underset{i=I}{\sum}\frac{\eta}{p_{i}} - \underset{i<j}{\underset{i,j\in I}{\sum}}\frac{\eta }{p_{i} p_{j}} + \underset{i<j<k}{\underset{i,j,k\in I}{\sum}}\frac{\eta }{p_ip_{j} p_{k}} -\cdots +(-1)^{t-1},
\end{equation}
where $\eta= \underset{i\in I}\prod p_i$. We next calculate the cardinality of certain subsets of $C_n$ which are needed in the subsequent sections.

\begin{lemma}\label{lem-2.1}
Let $a\in [r]$ and $0\leq k\leq s\leq n_a$. Then
$$
\underset{l=k}{\overset{s}{\sum}} \left\vert E_{\frac{n}{p_a^{l}}} \right\vert =\underset{l=k}{\overset{s}{\sum}} \phi\left( \frac{n}{p_a^{l}}\right)=
\begin{cases}
\frac{n}{p_1\cdots p_r}\times \frac{1}{p_a^{k-1}}\times \phi\left(\frac{p_1\cdots p_r}{p_a} \right), &\text{if $s=n_a$}\\
\frac{n}{p_1\cdots p_r}\times \frac{1}{p_a^{k-1}}\times\phi\left(\frac{p_1\cdots p_r}{p_a} \right)\left(1-\frac{1}{p_a^{s-k+1}}\right), &\text{if $s<n_a$}
\end{cases}.
$$
\end{lemma}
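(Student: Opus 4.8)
The plan is to reduce the sum to a finite geometric series in the single prime $p_a$ by peeling off the Euler-factors at the primes $p_i$ with $i\neq a$, which do not depend on the summation index $l$.

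First, since $\frac{n}{p_a^l}=p_a^{n_a-l}\prod_{i\neq a}p_i^{n_i}$ and $\phi$ is multiplicative, I would write $\phi\!\left(\frac{n}{p_a^l}\right)=\phi\!\left(p_a^{n_a-l}\right)\prod_{i\neq a}\phi\!\left(p_i^{n_i}\right)$ for each $l$ with $0\le l\le n_a$. Using $\phi(p_i^{n_i})=p_i^{n_i-1}\phi(p_i)$ for every $i$ together with $\frac{n}{p_1\cdots p_r}=\prod_{i=1}^{r}p_i^{n_i-1}$, the index-free factor simplifies to
$$\prod_{i\neq a}\phi\!\left(p_i^{n_i}\right)=\Bigl(\prod_{i\neq a}p_i^{n_i-1}\Bigr)\,\phi\!\left(\frac{p_1\cdots p_r}{p_a}\right)=\frac{n}{p_1\cdots p_r}\cdot\frac{1}{p_a^{n_a-1}}\cdot\phi\!\left(\frac{p_1\cdots p_r}{p_a}\right),$$
so that the whole sum equals this constant times $\Sigma:=\sum_{l=k}^{s}\phi\!\left(p_a^{n_a-l}\right)$.

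Next I would evaluate $\Sigma$. The substitution $m=n_a-l$ turns it into $\sum_{m=n_a-s}^{n_a-k}\phi(p_a^m)$. When $s<n_a$ every exponent $m$ in this range is positive, so $\phi(p_a^m)=p_a^{m-1}(p_a-1)$ and $\Sigma$ is a finite geometric series with first term $p_a^{n_a-s-1}$, last term $p_a^{n_a-k-1}$, and $s-k+1$ terms, summing to $p_a^{n_a-k}-p_a^{n_a-s-1}=p_a^{n_a-k}\bigl(1-\tfrac{1}{p_a^{s-k+1}}\bigr)$. When $s=n_a$ the term $m=0$ contributes $\phi(1)=1$, which I would absorb either by the telescoping identity $\sum_{m=0}^{N}\phi(p_a^m)=p_a^{N}$ (with $N=n_a-k$) or by adding $1$ to the geometric sum above; either way $\Sigma=p_a^{n_a-k}$. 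Multiplying $\Sigma$ by the constant from the first step and cancelling $p_a^{n_a-1}$ against $p_a^{n_a-k}$, leaving $p_a^{1-k}=1/p_a^{k-1}$, yields the two asserted expressions.

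I do not anticipate a genuine obstacle here; the computation is essentially bookkeeping. The one point requiring care is the discontinuity of the formula $\phi(p_a^m)=p_a^{m-1}(p_a-1)$ at $m=0$ — this is precisely why the statement splits into the cases $s=n_a$ and $s<n_a$ — and one must verify the geometric-series data (first term, last term, and count $s-k+1$ of terms) are correct in each case.
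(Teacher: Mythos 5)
Your proposal is correct and follows essentially the same route as the paper: factor out the index-free part $\prod_{i\neq a}\phi\left(p_i^{n_i}\right)=\phi\left(\frac{n}{p_a^{n_a}}\right)$, rewrite it as $\frac{n}{p_1\cdots p_r}\cdot\frac{1}{p_a^{n_a-1}}\cdot\phi\left(\frac{p_1\cdots p_r}{p_a}\right)$, and evaluate $\sum_{l=k}^{s}\phi\left(p_a^{n_a-l}\right)$ by the substitution $m=n_a-l$, splitting into the cases $s=n_a$ and $s<n_a$. The only difference is that you spell out the geometric/telescoping evaluation that the paper states without detail, so no further comment is needed.
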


\begin{proof}
We have $\underset{l=k}{\overset{s}{\sum}} \phi\left( \frac{n}{p_a^{l}}\right) = \phi\left( \frac{n}{p_a^{n_a}}\right)\left[ \underset{l=k}{\overset{s}{\sum}} \phi\left(p_a^{n_a-l}\right)\right]$. Since
$\phi\left( \frac{n}{p_a^{n_a}}\right)= \frac{n}{p_1p_2\cdots p_r}\times \frac{1}{p_a^{n_a-1}}\times \phi\left(\frac{p_1p_2\cdots p_r}{p_a} \right)$ and
$$\underset{l=k}{\overset{s}{\sum}} \phi\left(p_a^{n_a-l}\right)=\underset{i=n_a-s}{\overset{n_a-k}{\sum}} \phi(p_a^i)= \left\{
\begin{array}{ll}
    p_a^{n_a-k}, & \text{if } s=n_a\\
    p_a^{n_a-k}\left(1-\frac{1}{p_a^{s-k+1}}\right), & \text{if } s<n_a,\\
\end{array}
\right.$$
the lemma follows.
\end{proof}

\begin{lemma}\label{lem-2.4}
Let $a,b\in [r]$ with $a\neq b$, $0\leq s\leq n_a$ and $0\leq t\leq n_b$. Then
$$\underset{k=s}{\overset{n_a}{\sum}}\; \underset{l=t}{\overset{n_b}{\sum}} \left\vert E_{\frac{n}{p_a^{k}p_b^{l}}} \right\vert
=\phi\left( \frac{n}{p_a^{n_a}p_b^{n_b}}\right) p_a^{n_a-s} p_b^{n_b-t}.$$
\end{lemma}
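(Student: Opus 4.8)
The plan is to mimic the structure of the proof of Lemma \ref{lem-2.1}, but now summing over a two-dimensional range of exponents. First I would factor out the ``prime-to-$p_a p_b$'' part of each totient: for $k$ in $[s,n_a]$ and $l$ in $[t,n_b]$ we have $\phi\!\left(\frac{n}{p_a^{k}p_b^{l}}\right)=\phi\!\left(\frac{n}{p_a^{n_a}p_b^{n_b}}\right)\phi\!\left(p_a^{n_a-k}\right)\phi\!\left(p_b^{n_b-l}\right)$, using multiplicativity of $\phi$ and the fact that $\gcd\!\left(\frac{n}{p_a^{n_a}p_b^{n_b}},\,p_a^{n_a-k}p_b^{n_b-l}\right)=1$. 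Since the constant $\phi\!\left(\frac{n}{p_a^{n_a}p_b^{n_b}}\right)$ does not depend on $k$ or $l$, the double sum factors as a product of two one-dimensional sums,
$$
\sum_{k=s}^{n_a}\sum_{l=t}^{n_b}\left|E_{\frac{n}{p_a^{k}p_b^{l}}}\right|
=\phi\!\left(\tfrac{n}{p_a^{n_a}p_b^{n_b}}\right)\left(\sum_{k=s}^{n_a}\phi\!\left(p_a^{n_a-k}\right)\right)\left(\sum_{l=t}^{n_b}\phi\!\left(p_b^{n_b-l}\right)\right).
$$

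Next I would evaluate each inner sum. Re-indexing $i=n_a-k$ turns $\sum_{k=s}^{n_a}\phi\!\left(p_a^{n_a-k}\right)$ into $\sum_{i=0}^{n_a-s}\phi(p_a^i)$, and the standard telescoping identity $\sum_{i=0}^{m}\phi(p^i)=p^{m}$ (which is exactly the $s=n_a$ case already recorded inside the proof of Lemma \ref{lem-2.1}) gives $p_a^{n_a-s}$; similarly the $p_b$-sum equals $p_b^{n_b-t}$. Multiplying the three factors yields $\phi\!\left(\frac{n}{p_a^{n_a}p_b^{n_b}}\right)p_a^{n_a-s}p_b^{n_b-t}$, which is the claimed value. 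The first equality in the displayed statement of the lemma — that each $\left|E_{\frac{n}{p_a^{k}p_b^{l}}}\right|$ equals $\phi\!\left(\frac{n}{p_a^{k}p_b^{l}}\right)$ — is immediate from $|E_d|=\phi(d)$, which is recalled in the preliminaries.

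There is no real obstacle here: the only things to be careful about are the coprimality bookkeeping that licenses splitting off $\phi\!\left(\frac{n}{p_a^{n_a}p_b^{n_b}}\right)$ (one needs $a\neq b$ so that $p_a$ and $p_b$ are genuinely distinct primes, and one needs $k\le n_a$, $l\le n_b$ so the exponents stay nonnegative), and the fact that $\phi(p^0)=\phi(1)=1$ so the telescoping sum includes its correct first term. Since the range conditions $0\le s\le n_a$ and $0\le t\le n_b$ are in the hypothesis, the proof is a short computation and I would present it in essentially the same compact style as Lemma \ref{lem-2.1}.
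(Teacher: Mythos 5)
Your proposal is correct and follows essentially the same route as the paper's proof: factor out $\phi\left(\frac{n}{p_a^{n_a}p_b^{n_b}}\right)$ by multiplicativity, split the double sum into the product $\left[\sum_{k=s}^{n_a}\phi\left(p_a^{n_a-k}\right)\right]\left[\sum_{l=t}^{n_b}\phi\left(p_b^{n_b-l}\right)\right]$, and evaluate each factor as $p_a^{n_a-s}$ and $p_b^{n_b-t}$ via $\sum_{i=0}^{m}\phi(p^i)=p^m$. The extra remarks on coprimality and the $\phi(1)=1$ term are fine but not needed beyond what the paper states.
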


\begin{proof}
We have $\underset{k=s}{\overset{n_a}{\sum}} \;\underset{l=t}{\overset{n_b}{\sum}} \left\vert E_{\frac{n}{p_a^{k}p_b^{l}}} \right\vert = \underset{k=s}{\overset{n_a}{\sum}}\; \underset{l=t}{\overset{n_b}{\sum}} \phi\left( \frac{n}{p_a^{k}p_b^{l}}\right) =\phi\left(\frac{n}{p_a^{n_a}p_b^{n_b}}\right) \left[\underset{k=s}{\overset{n_a}{\sum}}\phi\left( p_a^{n_a -k}\right)\right] \left[\underset{l=t}{\overset{n_b}{\sum}} \phi\left( p_b^{n_b - l}\right) \right]$. Then the lemma follows as $\underset{k=s}{\overset{n_a}{\sum}}\phi\left( p_a^{n_a -k}\right) = p_a^{n_a-s}$ and  $\underset{l=t}{\overset{n_b}{\sum}} \phi\left( p_b^{n_b - l}\right)=p_b^{n_b-t}$.
\end{proof}

\begin{lemma}\label{lem-Q-a-s}
For $a\in [r]$ and $s\in [n_a]$, let $Q_a^s$ be the union of the subgroups $S_{\frac{n}{p_ip_a^s}}$ of $C_n$, where $i\in [r]\setminus\{a\}$. Then
$$\left|Q_a^s\right| = \frac{n}{p_1p_2\cdots p_r}\times\frac{1}{p_a^{s-1}}\times \left[\frac{p_1p_2\cdots p_r}{p_a} - \phi\left(\frac{p_1p_2\cdots p_r}{p_a}\right) \right].$$
\end{lemma}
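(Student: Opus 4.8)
The plan is to compute $|Q_a^s|$ by inclusion–exclusion over the $r-1$ subgroups $S_{\frac{n}{p_i p_a^s}}$, $i \in [r]\setminus\{a\}$. Since the subgroup lattice of $C_n$ is isomorphic to the divisor lattice of $n$, the intersection of any collection of these subgroups is again a cyclic subgroup whose order is the gcd of the orders involved; concretely, for a nonempty subset $J \subseteq [r]\setminus\{a\}$ we have $\bigcap_{i \in J} S_{\frac{n}{p_i p_a^s}} = S_d$ where $d = \gcd\bigl(\frac{n}{p_i p_a^s} : i \in J\bigr) = \frac{n}{p_a^s \prod_{i \in J} p_i}$, so $\bigl|\bigcap_{i \in J} S_{\frac{n}{p_i p_a^s}}\bigr| = \frac{n}{p_a^s \prod_{i \in J} p_i}$.

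First I would write, by inclusion–exclusion,
$$\left|Q_a^s\right| = \sum_{\emptyset \neq J \subseteq [r]\setminus\{a\}} (-1)^{|J|+1} \frac{n}{p_a^s \prod_{i \in J} p_i} = \frac{n}{p_a^s} \sum_{\emptyset \neq J \subseteq [r]\setminus\{a\}} (-1)^{|J|+1} \prod_{i \in J} \frac{1}{p_i}.$$
Next I would factor $\frac{n}{p_a^s} = \frac{n}{p_1 p_2 \cdots p_r} \cdot \frac{1}{p_a^{s-1}} \cdot \frac{p_1 p_2 \cdots p_r}{p_a}$ and recognize that the alternating sum, after multiplying through by $\frac{p_1 \cdots p_r}{p_a} = \prod_{i \in [r]\setminus\{a\}} p_i =: \eta'$, is exactly $\sum_{\emptyset \neq J} (-1)^{|J|+1} \frac{\eta'}{\prod_{i \in J} p_i}$. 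By the expansion in \eqref{eqn-3} applied with the index set $I = [r]\setminus\{a\}$ (so $t = r-1$ and $\eta = \eta'$), this alternating sum equals $\eta' - \phi(\eta') = \frac{p_1 \cdots p_r}{p_a} - \phi\bigl(\frac{p_1 \cdots p_r}{p_a}\bigr)$. Substituting back gives
$$\left|Q_a^s\right| = \frac{n}{p_1 p_2 \cdots p_r} \cdot \frac{1}{p_a^{s-1}} \cdot \left[\frac{p_1 p_2 \cdots p_r}{p_a} - \phi\left(\frac{p_1 p_2 \cdots p_r}{p_a}\right)\right],$$
as claimed.

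The only genuine subtlety is to justify that inclusion–exclusion is being applied correctly, i.e.\ that the sizes of all the multi-way intersections really are given by the gcd formula uniformly; this is immediate from the stated fact that $|H_1 \cap \cdots \cap H_k|$ equals the gcd of the $|H_i|$ for subgroups of $C_n$, so there is no real obstacle here. An equivalent and perhaps cleaner route, which I would mention as an alternative, is to note that $Q_a^s$ is the set of $x \in S_{\frac{n}{p_a^s}}$ whose order is a proper divisor of $\frac{n}{p_a^s}$ that is missing at least one prime $p_i$ with $i \neq a$ — equivalently, $x \in S_{\frac{n}{p_a^s}}$ is \emph{not} in $Q_a^s$ precisely when $p_i \mid o(x)$ for every $i \in [r]\setminus\{a\}$; counting the latter set directly (it has size $\frac{n}{p_a^s} \cdot \prod_{i \neq a}(1 - \frac{1}{p_i}) = \frac{n}{p_a^s} \cdot \frac{\phi(\eta')}{\eta'}$) and subtracting from $\left|S_{\frac{n}{p_a^s}}\right| = \frac{n}{p_a^s}$ yields the same answer after the same factorization.
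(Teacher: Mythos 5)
Your main argument is correct, but it proceeds by a genuinely different route than the paper. You compute $|Q_a^s|$ directly by inclusion--exclusion over the $r-1$ subgroups $S_{\frac{n}{p_ip_a^s}}$, using the gcd description of intersections and then identifying the alternating sum with $\eta'-\phi(\eta')$ via (\ref{eqn-3}); this is essentially the same mechanism the paper uses later in Lemma \ref{none-1} for a different family of unions, so your proof is a legitimate self-contained variant that does not need Lemmas \ref{lem-2.1} or \ref{lem-2.4}. The paper instead works inside the ambient subgroup $S_{\frac{n}{p_a^s}}$ and counts the complement $\widetilde{Q_a^s}$: it identifies $\widetilde{Q_a^s}$ as the disjoint union $E_{\frac{n}{p_a^s}}\cup\cdots\cup E_{\frac{n}{p_a^{n_a}}}$ and sums totients via Lemma \ref{lem-2.1}, which avoids the $2^{r-1}$-term alternating sum and reuses a computation needed elsewhere (e.g.\ in Proposition \ref{size-Z-a-s}). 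One caveat about your ``alternative and cleaner route'', which is the paper's actual route: your membership criterion for the complement is off. An element $x\in S_{\frac{n}{p_a^s}}$ lies outside $Q_a^s$ precisely when $o(x)$ is divisible by $p_i^{n_i}$ for every $i\in[r]\setminus\{a\}$ (equivalently $o(x)=\frac{n}{p_a^k}$ with $s\le k\le n_a$), not merely when $p_i\mid o(x)$ for every such $i$; for example, with $n=12$, $a=2$, $s=1$, the element of order $2$ satisfies your condition but lies in $Q_2^1$. The count $\frac{n}{p_a^s}\prod_{i\neq a}\bigl(1-\frac{1}{p_i}\bigr)$ you wrote happens to equal the size of the true complement, but it does not count the set as you described it, so if you keep that remark you should correct the characterization (or justify the count as the paper does, via the disjoint union of the sets $E_{\frac{n}{p_a^k}}$).
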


\begin{proof}
Consider the subgroup $S_{\frac{n}{p_a^s}}$ of $C_n$ containing $Q_a^s$. Let $\widetilde{Q_a^s}$ denote the complement of $Q_a^s$ in $S_{\frac{n}{p_a^s}}$. If $x\in S_{\frac{n}{p_a^s}}$ and $o(x)=\frac{n}{p_a^k}$ for some $k$ with $s\leq k\leq n_a$, then $x\in\widetilde{Q_a^s}$. Conversely, if $y\in\widetilde{Q_a^s}$, then $o(y)$ must be divisible by $p_i^{n_i}$ for every $i\in [r]\setminus\{a\}$ and so $o(y)$ is of the form $\frac{n}{p_a^k}$, where $s\leq k\leq n_a$.

Thus $\widetilde{Q_a^s}=E_{\frac{n}{p_a^s}}\cup E_{\frac{n}{p_a^{s+1}}}\cup \cdots \cup E_{\frac{n}{p_a^{n_a}}}$, which is a disjoint union. Using Lemma \ref{lem-2.1}, we have
$
|\widetilde{Q_a^s}|=\underset{l=s}{\overset{n_a}{\sum}} \phi\left( \frac{n}{p_a^{l}}\right)= \frac{n}{p_1p_2\cdots p_r}\times \frac{1}{p_a^{s-1}}\times \phi\left(\frac{p_1p_2\cdots p_{r}}{p_a}\right)
$. Hence $|Q_a^s|=\left|S_{\frac{n}{p_a^s}} \right| - |\widetilde{Q_a^s}|= \frac{n}{p_1p_2\cdots p_r}\times\frac{1}{p_a^{s-1}}\times \left[\frac{p_1p_2\cdots p_r}{p_a} - \phi\left(\frac{p_1p_2\cdots p_r}{p_a}\right) \right].$
\end{proof}

\begin{lemma}\label{lem-Q-a-s-1}
Let $r\geq 3$. If  $a,b\in [r]$ with $a<b$, then $\left|Q_a^1\right| > \left|Q_b^1\right|$. As a consequence, $\left|Z_a^1 \right|> \left|Z_b^1 \right|$.
\end{lemma}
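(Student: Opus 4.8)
The plan is to reduce the claim to a one–variable comparison by factoring out of $\left|Q_a^1\right|$ everything that does not depend on the index $a$. Write $P=p_1p_2\cdots p_r$. Applying Lemma \ref{lem-Q-a-s} with $s=1$ gives $\left|Q_a^1\right|=\frac{n}{P}\left[\frac{P}{p_a}-\phi\!\left(\frac{P}{p_a}\right)\right]$, and similarly for $b$. Since $\frac{n}{P}$ is a positive constant independent of the index, it suffices to establish the strict inequality $\frac{P}{p_a}-\phi\!\left(\frac{P}{p_a}\right)>\frac{P}{p_b}-\phi\!\left(\frac{P}{p_b}\right)$.

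For this I would introduce $M=\prod_{i\in[r]\setminus\{a,b\}}p_i=\frac{P}{p_ap_b}$, so that $\frac{P}{p_a}=Mp_b$ and $\frac{P}{p_b}=Mp_a$. Because $p_a$ and $p_b$ are distinct primes not dividing $M$, multiplicativity of $\phi$ yields $\phi\!\left(\frac{P}{p_a}\right)=\phi(M)(p_b-1)$ and $\phi\!\left(\frac{P}{p_b}\right)=\phi(M)(p_a-1)$. Subtracting, the two quantities differ by
$$\left[Mp_b-\phi(M)(p_b-1)\right]-\left[Mp_a-\phi(M)(p_a-1)\right]=(p_b-p_a)\bigl(M-\phi(M)\bigr).$$
Now $a<b$ forces $p_a<p_b$, so $p_b-p_a>0$; and since $r\geq 3$ the set $[r]\setminus\{a,b\}$ is nonempty, so $M$ is a product of at least one prime, whence $M\geq 2$ and $M-\phi(M)>0$. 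Therefore the displayed difference is positive, which gives $\left|Q_a^1\right|>\left|Q_b^1\right|$.

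For the consequence, observe that when $s=1$ the chain $E_{\frac{n}{p_a}}\cup\cdots\cup E_{\frac{n}{p_a^{s-1}}}$ in the definition of $Z_a^s$ is an empty range, so $Z_a^1=E_n\cup Q_a^1$. This union is disjoint: $Q_a^1$ is contained in the proper subgroup $S_{\frac{n}{p_a}}$ of $C_n$, which has no element of order $n$, while $E_n$ is exactly the set of elements of order $n$. Hence $\left|Z_a^1\right|=\phi(n)+\left|Q_a^1\right|$, and likewise $\left|Z_b^1\right|=\phi(n)+\left|Q_b^1\right|$; since $\phi(n)$ is independent of the index, $\left|Z_a^1\right|>\left|Z_b^1\right|$ follows from $\left|Q_a^1\right|>\left|Q_b^1\right|$.

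There is no serious obstacle here beyond choosing the right normalization; the computation is a two–line identity once $M$ is introduced. The only point where the hypothesis $r\geq 3$ is actually used is to guarantee $M>1$ (equivalently $M-\phi(M)\geq 1$): for $r=2$ one would have $M=1$, the difference above would vanish, and $Z_1^1,Z_2^1$ would in general not be comparable in this way.
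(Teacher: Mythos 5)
Your proof is correct and follows essentially the same route as the paper: both apply Lemma \ref{lem-Q-a-s} with $s=1$ and reduce to the identity $\left|Q_a^1\right|-\left|Q_b^1\right|=\frac{n}{p_1\cdots p_r}(p_b-p_a)\left[\frac{p_1\cdots p_r}{p_ap_b}-\phi\!\left(\frac{p_1\cdots p_r}{p_ap_b}\right)\right]$, using $a<b$ and $r\geq 3$ to conclude positivity, and then the disjointness $\left|Z_c^1\right|=\phi(n)+\left|Q_c^1\right|$ for the consequence. You merely spell out the algebra (via $M=\frac{p_1\cdots p_r}{p_ap_b}$) and the disjointness of $E_n$ and $Q_a^1$ that the paper leaves implicit.
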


\begin{proof}
Using Lemma \ref{lem-Q-a-s}, we get $\left|Q_a^1\right|-\left|Q_b^1\right|= \frac{n}{p_1p_2\cdots p_r}\times (p_b -p_a) \left[\frac{p_1p_2\cdots p_r}{p_ap_b} -\phi\left(\frac{p_1p_2\cdots p_r}{p_ap_b}\right)\right]$. Since $a<b$, we have $p_a <p_b$. As $r\geq 3$, it follows that $\left|Q_a^1\right|>\left|Q_b^1\right|$. Then $\left|Z_a^1 \right|=|E_n|+ \left|Q_a^1\right|> |E_n|+ \left|Q_b^1\right| = \left|Z_b^1 \right|$.
\end{proof}

\begin{lemma}\label{lem-to-use}
Let $r\geq 3$ and $a,b\in [r]$ such that $a\neq b$ and $p_b > 2$. If $T$ is the union of the $r-2$ subgroups $S_{\frac{n}{p_bp_j}}$ of $C_n$ with $j\in [r]\setminus\{a,b\}$, then $\left|E_{\frac{n}{p_a}} \right| + |T| > \left|Q_r^1\right|$.
\end{lemma}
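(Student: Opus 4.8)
The plan is to compute $|T|$ exactly, substitute the formula for $\left|Q_r^1\right|$ from Lemma \ref{lem-Q-a-s}, bound $\left|E_{\frac n{p_a}}\right|$ from below, and reduce the claim to an elementary inequality between the three primes $p_a,p_b,p_r$. First I would compute $|T|$ by the complement argument used in Lemma \ref{lem-Q-a-s}: each subgroup $S_{\frac{n}{p_bp_j}}$ occurring in $T$ lies in $S_{\frac n{p_b}}$, so set $\widetilde T:=S_{\frac n{p_b}}\setminus T$. An element $x\in S_{\frac n{p_b}}$ lies in $\widetilde T$ exactly when $o(x)\nmid n/(p_bp_j)$ for every $j\in[r]\setminus\{a,b\}$; comparing prime exponents and using that $o(x)\mid n/p_b$ already holds, this amounts to $p_j^{n_j}\mid o(x)$ for all $j\neq a,b$, that is, $o(x)=n/(p_a^kp_b^l)$ with $0\le k\le n_a$ and $1\le l\le n_b$. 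Hence $\widetilde T=\bigcup_{k=0}^{n_a}\bigcup_{l=1}^{n_b}E_{\frac n{p_a^kp_b^l}}$ is a disjoint union, Lemma \ref{lem-2.4} (applied with $s=0$, $t=1$) gives $\left|\widetilde T\right|=\frac{n}{p_1p_2\cdots p_r}\,p_a\,\phi\!\left(\frac{p_1p_2\cdots p_r}{p_ap_b}\right)$, and subtracting from $\left|S_{\frac n{p_b}}\right|=n/p_b$ yields
$$|T|=\frac{n}{p_1p_2\cdots p_r}\;p_a\left[\frac{p_1p_2\cdots p_r}{p_ap_b}-\phi\!\left(\frac{p_1p_2\cdots p_r}{p_ap_b}\right)\right].$$

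Put $W=\frac{n}{p_1p_2\cdots p_r}$, $P=p_1p_2\cdots p_r$ and $V=P/(p_ap_b)$, so that $|T|=Wp_a(V-\phi(V))$ and $p_aV=P/p_b$. By Lemma \ref{lem-Q-a-s}, $\left|Q_r^1\right|=W\bigl(P/p_r-\phi(P/p_r)\bigr)$, while $\phi(n/p_a)$ equals $\phi(n)/p_a$ if $n_a\ge 2$ and $\phi(n)/(p_a-1)$ if $n_a=1$, so in all cases $\left|E_{\frac n{p_a}}\right|=\phi(n/p_a)\ge W\,\phi(P)/p_a$. Dividing the asserted inequality by $W$, it therefore suffices to prove
$$\frac{\phi(P)}{p_a}+p_a\bigl(V-\phi(V)\bigr)>\frac{P}{p_r}-\phi\!\left(\frac{P}{p_r}\right).$$

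Now I would split into three cases according to the position of $p_r$. If $b=r$, then $P/p_b=P/p_r=p_aV$ and $\phi(P/p_r)=(p_a-1)\phi(V)$ since $\gcd(p_a,V)=1$; after cancellation the displayed inequality becomes $\phi(P)>p_a\phi(V)$, that is, $(p_a-1)(p_r-1)>p_a$, which holds because $p_r\ge 5$. If $a=r$, then $P/p_b=p_rV$ and $P/p_r=p_bV$ with $\phi(P/p_r)=(p_b-1)\phi(V)$; substituting $\phi(P)/p_a=\frac{(p_r-1)(p_b-1)}{p_r}\phi(V)$ and rearranging reduces the claim to
$$(p_r-p_b)\bigl(V-\phi(V)\bigr)>\phi(V)\cdot\frac{p_r-(p_r-1)(p_b-1)}{p_r},$$
whose left side is nonnegative while, because $p_b>2$ forces $p_r(p_b-2)>p_b-1$ and hence $(p_r-1)(p_b-1)>p_r$, its right side is negative. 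If $r\notin\{a,b\}$, set $U=V/p_r$, so $V=p_rU$, $\phi(V)=(p_r-1)\phi(U)$, $P/p_r=p_ap_bU$ and $\phi(P/p_r)=(p_a-1)(p_b-1)\phi(U)$; the same manipulation reduces the claim to
$$p_a(p_r-p_b)\bigl(U-\phi(U)\bigr)>(p_b-1)\phi(U)\cdot\frac{p_a-(p_a-1)(p_r-1)}{p_a},$$
again with nonnegative left side and, since $(p_a-1)(p_r-1)>p_a$, negative right side.

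I expect the main obstacle to be the reduction to the displayed prime inequality and the algebraic rearrangements in the last two cases: these are short but must be done without wasteful estimates, since a crude bound on $\left|E_{\frac n{p_a}}\right|$ or on the difference $|Q_r^1|-|T|$ already fails in small configurations (for instance $r=3$, $\{p_a,p_b\}=\{2,3\}$), whereas keeping the quantities exact makes the right-hand side negative. Everything else is bookkeeping. Note that the hypothesis $p_b>2$ is used only in the case $a=r$, where it is precisely what forces the relevant right-hand side to be negative.
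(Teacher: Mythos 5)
Your proposal is correct: the computation of $|T|$ via the complement $\widetilde T$ in $S_{\frac{n}{p_b}}$ (giving $|T|=\frac{n}{p_1\cdots p_r}\,p_a\bigl[V-\phi(V)\bigr]$ with $V=\frac{p_1\cdots p_r}{p_ap_b}$), the lower bound $\bigl|E_{\frac{n}{p_a}}\bigr|\geq \frac{n}{p_1\cdots p_r}\cdot\frac{\phi(p_1\cdots p_r)}{p_a}$, and the three case reductions (to $(p_a-1)(p_r-1)>p_a$ when $b=r$, and to a right-hand side made negative by $(p_r-1)(p_b-1)>p_r$ resp.\ $(p_a-1)(p_r-1)>p_a$ in the other two cases) all check out, with $p_r\geq 5$ and $p_b>2$ used exactly where you say. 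Your route differs from the paper's in how the comparison with $\bigl|Q_r^1\bigr|$ is organized: the paper observes that $T\cup S_{\frac{n}{p_ap_b}}=Q_b^1$, takes the complement $\widetilde T$ of $T$ inside $Q_b^1$ (of size $\frac{n}{p_1\cdots p_r}\phi(V)$), shows $\bigl|E_{\frac{n}{p_a}}\bigr|\geq |\widetilde T|\cdot\frac{\phi(p_ap_b)}{p_a}\geq |\widetilde T|$ using $p_b>2$, and concludes $\bigl|E_{\frac{n}{p_a}}\bigr|+|T|\geq |Q_b^1|$, finishing by the monotonicity $\bigl|Q_b^1\bigr|>\bigl|Q_r^1\bigr|$ for $b<r$ (Lemma \ref{lem-Q-a-s-1}), with strictness coming instead from $p_b=p_r\geq 5$ when $b=r$. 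So the paper never computes $\bigl|Q_r^1\bigr|$ explicitly and needs only the dichotomy $b=r$ versus $b<r$, at the cost of invoking Lemma \ref{lem-Q-a-s-1}; your argument is self-contained modulo Lemmas \ref{lem-2.1}--\ref{lem-Q-a-s} and keeps all quantities exact, at the cost of a three-way case analysis and more algebra. Both are valid; the exactness you insist on is indeed what makes the boundary configurations (e.g.\ $\{p_a,p_b\}=\{2,3\}$, where the paper has equality $\bigl|E_{\frac{n}{p_a}}\bigr|=|\widetilde T|$ and relies on the strictness of Lemma \ref{lem-Q-a-s-1}) come out strictly.
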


\begin{proof}
Note that the union of $T$ and $S_{\frac{n}{p_ap_b}}$ is $Q_b^1$. Let $\widetilde{T}$ be the complement of $T$ in $Q_b^1$. Then $\widetilde{T}$ is a subset of $S_{\frac{n}{p_ap_b}}$. Further, $\widetilde{T}$ is precisely union of the mutually disjoint sets $E_{\frac{n}{p_a^kp_b^l}}$, where $1\leq k\leq n_a$, $1\leq l\leq n_b$. By Lemma \ref{lem-2.4}, we get $|\widetilde{T}|=\phi\left( \frac{n}{p_a^{n_a}p_b^{n_b}}\right) p_a^{n_a-1} p_b^{n_b-1}=\frac{n}{p_1p_2\cdots p_r}\times\phi\left(\frac{p_1p_2\cdots p_r}{p_ap_b}\right)$.
We have
$$\left|E_{\frac{n}{p_a}} \right|=\phi\left(\frac{n}{p_a} \right)\geq \frac{n}{p_1p_2\cdots p_r} \times\frac{\phi(p_1p_2\cdots p_r)}{p_a}=|\widetilde{T}|\times \frac{\phi(p_ap_b)}{p_a}.$$
If $b=r$, then $p_b\geq 5$ as $r\geq 3$. In this case, $|\widetilde{T}|\times \frac{\phi(p_ap_b)}{p_a}> |\widetilde{T}|$ and so $\left|E_{\frac{n}{p_a}} \right| + |T| > |\widetilde{T}| +|T|=|Q_r^1|$. If $b <r$, then $|\widetilde{T}|\times \frac{\phi(p_ap_b)}{p_a}\geq |\widetilde{T}|$ as $p_b >2$ and so $\left|E_{\frac{n}{p_a}} \right| + |T|\geq |\widetilde{T}| +|T|=|Q_b^1| > \left|Q_r^1\right|$ by Lemma \ref{lem-Q-a-s-1}.
\end{proof}

\begin{lemma}\label{lem-Q-a-b-s}
Let $a,b\in [r]$ with $a\neq b$, $s\in [n_a]$ and let $Q_{a,b}^s$ be the union of the subgroups $S_{\frac{n}{p_ip_a^s}}$ of $C_n$, where $i\in [r]\setminus\{a,b\}$. Then
$$\left|Q_{a,b}^s\right| = \frac{n}{p_1p_2\cdots p_r}\times\frac{p_b}{p_a^{s-1}}\times \left[\frac{p_1p_2\cdots p_r}{p_ap_b} - \phi\left(\frac{p_1p_2\cdots p_r}{p_ap_b}\right) \right].$$
\end{lemma}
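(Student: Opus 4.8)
The plan is to mimic the proof of Lemma~\ref{lem-Q-a-s}, now carrying along the extra excluded prime $p_b$. First note that for each $i\in[r]\setminus\{a,b\}$ the integer $\frac{n}{p_ip_a^s}$ divides $\frac{n}{p_a^s}$, so the subgroup $S_{\frac{n}{p_ip_a^s}}$ is contained in $S_{\frac{n}{p_a^s}}$; hence $Q_{a,b}^s\subseteq S_{\frac{n}{p_a^s}}$ and it suffices to compute the size of the complement $\widetilde{Q_{a,b}^s}$ of $Q_{a,b}^s$ in $S_{\frac{n}{p_a^s}}$ and subtract it from $\left|S_{\frac{n}{p_a^s}}\right|=\frac{n}{p_a^s}$.

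The crux is to identify $\widetilde{Q_{a,b}^s}$. For $y\in S_{\frac{n}{p_a^s}}$ we have $o(y)\mid\frac{n}{p_a^s}$, so the $p_a$-part of $o(y)$ is already under control; comparing $p_i$-adic valuations shows that $o(y)$ divides $\frac{n}{p_ip_a^s}$ if and only if $p_i^{n_i}\nmid o(y)$. Thus $y\in Q_{a,b}^s$ exactly when $p_i^{n_i}\nmid o(y)$ for some $i\in[r]\setminus\{a,b\}$, and therefore $y\in\widetilde{Q_{a,b}^s}$ exactly when $p_i^{n_i}\mid o(y)$ for every $i\in[r]\setminus\{a,b\}$, i.e.\ when $o(y)=\frac{n}{p_a^kp_b^l}$ for some $k,l$ with $s\le k\le n_a$ and $0\le l\le n_b$. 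Consequently
$\widetilde{Q_{a,b}^s}=\bigcup_{k=s}^{n_a}\bigcup_{l=0}^{n_b}E_{\frac{n}{p_a^kp_b^l}}$, a disjoint union.

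Applying Lemma~\ref{lem-2.4} with the pair $(s,0)$ in place of $(s,t)$ gives $\left|\widetilde{Q_{a,b}^s}\right|=\phi\!\left(\frac{n}{p_a^{n_a}p_b^{n_b}}\right)p_a^{n_a-s}p_b^{n_b}$. Since $\phi\!\left(\frac{n}{p_a^{n_a}p_b^{n_b}}\right)=\frac{n}{p_1p_2\cdots p_r}\cdot\frac{1}{p_a^{n_a-1}p_b^{n_b-1}}\cdot\phi\!\left(\frac{p_1p_2\cdots p_r}{p_ap_b}\right)$ by multiplicativity of $\phi$, this simplifies to $\left|\widetilde{Q_{a,b}^s}\right|=\frac{n}{p_1p_2\cdots p_r}\cdot\frac{p_b}{p_a^{s-1}}\cdot\phi\!\left(\frac{p_1p_2\cdots p_r}{p_ap_b}\right)$. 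Finally, writing $\left|S_{\frac{n}{p_a^s}}\right|=\frac{n}{p_a^s}=\frac{n}{p_1p_2\cdots p_r}\cdot\frac{p_b}{p_a^{s-1}}\cdot\frac{p_1p_2\cdots p_r}{p_ap_b}$ and subtracting yields the asserted formula for $\left|Q_{a,b}^s\right|$.

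I do not expect a genuine obstacle here: the only delicate point is the valuation bookkeeping in the second paragraph that pins down $\widetilde{Q_{a,b}^s}$, and this runs exactly parallel to the corresponding step in the proof of Lemma~\ref{lem-Q-a-s} (with Lemma~\ref{lem-2.4} used in place of Lemma~\ref{lem-2.1}); the remaining manipulations are routine applications of multiplicativity of $\phi$. (When $[r]\setminus\{a,b\}=\emptyset$ the union is empty and both sides of the identity vanish, so the statement holds trivially in that degenerate case as well.)
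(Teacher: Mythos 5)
Your proof is correct and follows essentially the same route as the paper's: both compute the complement $\widetilde{Q_{a,b}^s}$ of $Q_{a,b}^s$ inside $S_{\frac{n}{p_a^s}}$, identify it as the disjoint union of the sets $E_{\frac{n}{p_a^kp_b^l}}$ with $s\le k\le n_a$ and $0\le l\le n_b$, evaluate its size via Lemma~\ref{lem-2.4}, and subtract from $\left|S_{\frac{n}{p_a^s}}\right|=\frac{n}{p_a^s}$. Your valuation argument pinning down the complement (and the remark on the degenerate case) simply spells out a step the paper asserts without detail.
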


\begin{proof}
Consider the subgroup $S_{\frac{n}{p_a^s}}$ of $C_n$ containing $Q_{a,b}^s$. Let $\widetilde{Q_{a,b}^s}$ denote the complement of $Q_{a,b}^s$ in $S_{\frac{n}{p_a^s}}$. Then $\widetilde{Q_{a,b}^s}$ is a disjoint union of the sets $E_{\frac{n}{p_a^kp_b^l}}$, where $s\leq k\leq n_a$ and $0\leq l\leq n_b$. Using Lemma \ref{lem-2.4}, we get
\begin{align*}
|\widetilde{Q_{a,b}^s}| = \phi\left(\frac{n}{p_a^{n_a}p_b^{n_b}}\right) p_a^{n_a-s} p_b^{n_b}= \frac{n}{p_1p_2\cdots p_r}\times \frac{p_b}{p_a^{s-1}}\times \phi\left(\frac{p_1p_2\cdots p_r}{p_ap_b}\right).
\end{align*}
Hence $|Q_{a,b}^s|=\left|S_{\frac{n}{p_a^s}} \right| - |\widetilde{Q_{a,b}^s}|= \frac{n}{p_1p_2\cdots p_r}\times\frac{p_b}{p_a^{s-1}}\times \left[\frac{p_1p_2\cdots p_r}{p_ap_b} - \phi\left(\frac{p_1p_2\cdots p_r}{p_ap_b}\right) \right].$
\end{proof}

\begin{lemma}\label{lem-two-atmost}
$\phi\left(\frac{n}{p_j}\right) > \frac{n}{p_jp_r} -\phi\left( \frac{n}{p_jp_r} \right)$ for $j\in [r-1]$.
\end{lemma}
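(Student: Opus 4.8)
The plan is to recast the inequality in the equivalent additive form $\phi\left(\frac{n}{p_j}\right) + \phi\left(\frac{n}{p_jp_r}\right) > \frac{n}{p_jp_r}$ and to abbreviate $M := \frac{n}{p_jp_r}$, so that $\frac{n}{p_j} = Mp_r$ and the goal reads $\phi(Mp_r) + \phi(M) > M$. The point of entry is the identity $\frac{M}{\phi(M)} = \prod_{p \mid M}\frac{p}{p-1}$: since every factor exceeds $1$ and every prime dividing $M$ is one of $p_1,\dots,p_r$, one can bound $\frac{M}{\phi(M)}$ from above by $\frac{p_1\cdots p_k}{\phi(p_1\cdots p_k)}$ for a suitable $k$ and then invoke \eqref{eqn-2}. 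Because $\phi(Mp_r)$ equals $p_r\phi(M)$ or $(p_r-1)\phi(M)$ according as $p_r$ does or does not divide $M$, I would split the argument into the cases $n_r\ge 2$ and $n_r=1$. (Note that $[r-1]$ is empty when $r=1$, so the statement is vacuous there and we may assume $r\ge 2$, hence $p_r\ge 3$.)

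If $n_r\ge 2$, then $p_r\mid M$, so $\phi(Mp_r)=p_r\phi(M)$ and it suffices to prove $(p_r+1)\phi(M)>M$, i.e. $\frac{M}{\phi(M)}<p_r+1$. Every prime divisor of $M$ is among $p_1,\dots,p_r$, so $\frac{M}{\phi(M)}\le\frac{p_1\cdots p_r}{\phi(p_1\cdots p_r)}\le r+1$ by \eqref{eqn-2} with $I=[r]$; and $r+1\le p_r<p_r+1$ because $p_1<\cdots<p_r$ are primes, whence $p_r\ge p_1+(r-1)\ge r+1$. If instead $n_r=1$, then $p_r\nmid M$ (the single factor $p_r$ is removed in passing from $n$ to $M$), so $\phi(Mp_r)=(p_r-1)\phi(M)$ and it suffices to prove $p_r\phi(M)>M$, i.e. $\frac{M}{\phi(M)}<p_r$. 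Now every prime divisor of $M$ lies in $\{p_1,\dots,p_{r-1}\}$, so $\frac{M}{\phi(M)}\le\frac{p_1\cdots p_{r-1}}{\phi(p_1\cdots p_{r-1})}\le r$ by \eqref{eqn-2} with $I=[r-1]$, and $r<p_r$ since $p_r\ge r+1$.

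The one delicate point is keeping every estimate strict. The naive chain $\phi(Mp_r)+\phi(M)\ge(p_r-1)\phi(M)+\phi(M)=p_r\phi(M)\ge p_r\cdot\frac{\phi(p_1\cdots p_r)}{p_1\cdots p_r}\,M$ combined with \eqref{eqn-2} yields only $\ge M$, since equality in \eqref{eqn-2} genuinely occurs (e.g. $r=2$, $p_1=2$, $p_2=3$). The case split is exactly what supplies the missing slack: when $p_r\mid M$ the extra summand $\phi(M)$ promotes $p_r$ to $p_r+1$, and when $p_r\nmid M$ the prime $p_r$ is absent from $M$, which lowers the relevant bound from $r+1$ to $r$. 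In both cases a strict inequality results, which finishes the proof.
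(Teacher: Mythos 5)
Your proof is correct. Both arguments rest on the same two ingredients as the paper's: the additive reformulation $\phi\left(\frac{n}{p_j}\right)+\phi\left(\frac{n}{p_jp_r}\right)>\frac{n}{p_jp_r}$, the inequality (\ref{eqn-2}) for squarefree products, and the bound $p_r\geq r+1$; so in substance you are on the paper's route, but your organization is genuinely leaner. The paper computes the sum $\mu=\phi\left(\frac{n}{p_j}\right)+\phi\left(\frac{n}{p_jp_r}\right)$ exactly, splitting into four cases according to $n_j\geq 2$ or $n_j=1$ and $n_r\geq 2$ or $n_r=1$, and then checks the strict inequality against $\frac{n}{p_jp_r}$ case by case. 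You instead pass to $M=\frac{n}{p_jp_r}$, use $\phi(Mp_r)=p_r\phi(M)$ or $(p_r-1)\phi(M)$ according as $p_r\mid M$ or not, and bound the ratio $\frac{M}{\phi(M)}=\prod_{p\mid M}\frac{p}{p-1}$ by the corresponding radical ratio; this absorbs the $n_j$ distinction entirely and leaves only the two-way split on $n_r$, with (\ref{eqn-2}) applied to $I=[r]$ or $I=[r-1]$. What your version buys is brevity and a transparent account of where strictness comes from (the final step $r+1\leq p_r<p_r+1$, respectively $r<p_r$), exactly the point you flag; what the paper's version buys is explicit closed-form expressions for $\mu$, which are, however, not used elsewhere, so nothing is lost by your shortcut.
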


\begin{proof}
Let $\mu :=\phi\left(\frac{n}{p_j}\right) + \phi\left( \frac{n}{p_jp_r} \right)= \phi\left(\frac{n}{p_j^{n_j}p_r^{n_r}}\right) \phi\left(p_j^{n_j -1}\right) \left[\phi\left(p_r^{n_r}\right) + \phi\left(p_r^{n_r -1}\right) \right]$. We show that $\mu > \frac{n}{p_jp_r}$.
Since $\phi\left(\frac{n}{p_j^{n_j}p_r^{n_r}}\right)= \frac{n}{p_1p_2\cdots p_r}\times \frac{1}{p_j^{n_j -1}p_r^{n_r -1}}\times \phi\left(\frac{p_1p_2\cdots p_r}{p_jp_r}\right)$ and
\begin{align*}
\phi\left(p_j^{n_j -1}\right)  \left[\phi\left(p_r^{n_r}\right) + \phi\left(p_r^{n_r -1}\right) \right]
& = \begin{cases}
p_j^{n_j -2}p_r^{n_r -2}(p_r +1)\phi(p_jp_r), &\text{if $n_j\geq 2$, $n_r\geq 2$}\\
p_j^{n_j -2}p_r\phi(p_j), &\text{if $n_j\geq 2$, $n_r=1$}\\
p_r^{n_r -2}(p_r +1)\phi(p_r), &\text{if $n_j=1$, $n_r\geq 2$}\\
p_r, &\text{if $n_j=1= n_r$},
\end{cases}
\end{align*}
we get
\begin{align*}
\mu
& = \begin{cases}
 \frac{n}{p_1p_2\cdots p_r}\times (p_r +1) \times\frac{\phi\left(p_1p_2\cdots p_r\right)}{p_jp_r}, &\text{if $n_j\geq 2$, $n_r\geq 2$}\\
 \frac{n}{p_1p_2\cdots p_r}\times \frac{p_r}{p_j}\times\phi\left(\frac{p_1p_2\cdots p_r}{p_r}\right), &\text{if $n_j\geq 2$, $n_r=1$}\\
 \frac{n}{p_1p_2\cdots p_r}\times \frac{p_r +1}{p_r}\times\phi\left(\frac{p_1p_2\cdots p_r}{p_j}\right), &\text{if $n_j=1$, $n_r\geq 2$}\\
 \frac{n}{p_1p_2\cdots p_r}\times p_r\times\phi\left(\frac{p_1p_2\cdots p_r}{p_jp_r}\right), &\text{if $n_j=1= n_r$}
\end{cases}\\
& > \frac{n}{p_1p_2\cdots p_r}\times \frac{p_1p_2\cdots p_r}{p_jp_r} = \frac{n}{p_jp_r}.
\end{align*}
The strict inequality in the above follows from (\ref{eqn-2}) using the fact that $p_r \geq r+1$.
\end{proof}

\begin{lemma}\label{none-1}
Let $I_1$ be a nonempty proper subset of $[r]$, $a\in [r]\setminus I_1$ and $I_2=[r]\setminus (I_1\cup\{a\})$. If $K$ is the union of the subgroups $S_{\frac{n}{p_ip_a}}$ of $C_n$ for $i\in I_1$, then
$$|K|=\frac{n}{p_1p_2\cdots p_r}\times \underset{j\in I_2}\prod p_j \times \left[\underset{i\in I_1}\prod p_i - \phi\left(\underset{i\in I_1}\prod p_i\right)\right],$$
where $\underset{j\in I_2}\prod p_j =1$ if $I_2$ is the empty set.
\end{lemma}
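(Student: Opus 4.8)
The plan is to compute $|K|$ by inclusion--exclusion over the $|I_1|$ subgroups $S_{\frac{n}{p_ip_a}}$, $i\in I_1$, which is the natural approach given the expansion (\ref{eqn-3}). Since $C_n$ is cyclic, for any nonempty $J\subseteq I_1$ the intersection $\bigcap_{i\in J}S_{\frac{n}{p_ip_a}}$ is the unique subgroup of $C_n$ of order $\gcd\left(\tfrac{n}{p_ip_a}:i\in J\right)$; because the $p_i$ $(i\in J)$ and $p_a$ are distinct primes, this order equals $\frac{n}{p_a\prod_{i\in J}p_i}$. Hence
\[
|K|=\left|\bigcup_{i\in I_1}S_{\frac{n}{p_ip_a}}\right|=\sum_{\emptyset\neq J\subseteq I_1}(-1)^{|J|+1}\frac{n}{p_a\prod_{i\in J}p_i}=\frac{n}{p_a}\left[1-\prod_{i\in I_1}\left(1-\frac{1}{p_i}\right)\right].
\]
Using $\prod_{i\in I_1}\left(1-\tfrac{1}{p_i}\right)=\frac{1}{\prod_{i\in I_1}p_i}\,\phi\!\left(\prod_{i\in I_1}p_i\right)$ (equivalently, reading off the right-hand side of (\ref{eqn-3}) with $I=I_1$), this becomes $|K|=\frac{n}{p_a\prod_{i\in I_1}p_i}\left[\prod_{i\in I_1}p_i-\phi\!\left(\prod_{i\in I_1}p_i\right)\right]$.

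It then remains to rewrite the prefactor. Because $I_1$, $I_2$ and $\{a\}$ partition $[r]$, we have $p_1p_2\cdots p_r=p_a\cdot\prod_{i\in I_1}p_i\cdot\prod_{j\in I_2}p_j$, so $\frac{n}{p_a\prod_{i\in I_1}p_i}=\frac{n}{p_1p_2\cdots p_r}\prod_{j\in I_2}p_j$, with the empty product equal to $1$ when $I_2=\emptyset$. Substituting yields the claimed formula.

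Alternatively, to stay closer to the method of Lemmas~\ref{lem-Q-a-s} and~\ref{lem-Q-a-b-s}, one can argue via complements: all the subgroups $S_{\frac{n}{p_ip_a}}$ lie in $S_{\frac{n}{p_a}}$, and an element $x\in S_{\frac{n}{p_a}}$ lies in none of them precisely when $p_i^{n_i}\mid o(x)$ for all $i\in I_1$; thus the complement $\widetilde{K}$ of $K$ in $S_{\frac{n}{p_a}}$ is the disjoint union of the sets $E_d$ with $\prod_{i\in I_1}p_i^{n_i}\mid d\mid\frac{n}{p_a}$, so $|\widetilde{K}|=\sum_{d}\phi(d)$. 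Writing $\frac{n}{p_a}=\left(\prod_{i\in I_1}p_i^{n_i}\right)M$ with $M=p_a^{n_a-1}\prod_{j\in I_2}p_j^{n_j}$ coprime to $\prod_{i\in I_1}p_i^{n_i}$, multiplicativity of $\phi$ together with $\sum_{e\mid M}\phi(e)=M$ gives $|\widetilde{K}|=\phi\!\left(\prod_{i\in I_1}p_i^{n_i}\right)M$, and subtracting this from $\left|S_{\frac{n}{p_a}}\right|=\frac{n}{p_a}$ gives the same answer after the same bookkeeping of prime exponents.

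There is no genuine obstacle here: the statement is a routine cardinality count. The only points needing care are the verification that the intersections of the $S_{\frac{n}{p_ip_a}}$ have the stated orders (which uses that $C_n$ is cyclic and the subgroup lattice is the divisor lattice), the elementary identity $1-\prod_{i\in I_1}\left(1-\tfrac1{p_i}\right)=\big(\prod_{i\in I_1}p_i-\phi\!\left(\prod_{i\in I_1}p_i\right)\big)\big/\prod_{i\in I_1}p_i$, and the bookkeeping that turns $\frac{n}{p_a\prod_{i\in I_1}p_i}$ into $\frac{n}{p_1p_2\cdots p_r}\prod_{j\in I_2}p_j$; the empty-product convention for $I_2=\emptyset$ should be stated explicitly.
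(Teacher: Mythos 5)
Your main argument is correct and is essentially the paper's own proof: inclusion--exclusion over the subgroups $S_{\frac{n}{p_ip_a}}$, using that intersections of subgroups of $C_n$ have order the gcd of the orders, followed by the identity $\prod_{i\in I_1}\bigl(1-\tfrac1{p_i}\bigr)=\phi\bigl(\prod_{i\in I_1}p_i\bigr)\big/\prod_{i\in I_1}p_i$ (the paper invokes the same identity in its expanded form (\ref{eqn-3})) and the partition $[r]=I_1\cup I_2\cup\{a\}$ to rewrite the prefactor. Your alternative complement argument (in the style of Lemmas \ref{lem-Q-a-s} and \ref{lem-Q-a-b-s}) is also sound, but it is not needed.
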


\begin{proof}
Let $|I_1|=t$, $\alpha:=\underset{i\in I_1}\prod p_i$ and $\beta:=\underset{k\in I_2}\prod p_k$. Since $K=\underset{i\in I_1}{\bigcup} S_{\frac{n}{p_ip_a}}$, we get
\begin{align*}
\left\vert K\right\vert & = \underset{i\in I_1}{\sum} \left\vert S_{\frac{n}{p_ip_a}}\right\vert  - \underset{i<j}{\underset{i,j\in I_1}{\sum}} \left\vert S_{\frac{n}{p_ip_a}}\bigcap S_{\frac{n}{p_jp_a}}\right\vert + \cdots + (-1)^{t-1} \left\vert \underset{i\in I_1} {\bigcap} S_{\frac{n}{p_ip_a}}\right\vert \\
& = \underset{i\in I_1}{\sum} \frac{n}{p_ip_a} - \underset{i<j}{\underset{i,j\in I_1}{\sum}} \frac{n}{p_ip_jp_a} + \cdots + (-1)^{t-1} \frac{n}{\alpha p_a} \\
 & = \frac{n}{p_1p_2\cdots p_r}\times \beta\times \left[\underset{i\in I_1}{\sum}\frac{\alpha}{p_i} - \underset{i<j}{\underset{i,j\in I_1}{\sum}}\frac{\alpha}{p_ip_j} +\cdots +(-1)^{t-1}\right]\\
 & = \frac{n}{p_1p_2\cdots p_r}\times \beta \times [\alpha - \phi(\alpha)],
\end{align*}
where the last equality holds using (\ref{eqn-3}).
\end{proof}

\section{The cut-sets $Z_a^s$ and $X_{a,b}^{s,t}$}

For a nonempty proper subset $X$ of $C_n$, we denote by $\overline{X}$ the complement of $X$ in $C_n$ and by $\mathcal{P}(\overline{X})$ the induced subgraph of $\mathcal{P}(C_n)$ with vertex set $\overline{X}$.

\subsection{The cut-sets $Z_a^s$}\label{sec-Z-a-s}

Let $r\geq 2$, $a\in [r]$ and $s\in [n_a]$. Recall that $Z_a^s=E_n\cup E_{\frac{n}{p_a}}\cup E_{\frac{n}{p_a^2}}\cup\ldots\cup E_{\frac{n}{p_a^{s-1}}}\cup Q_a^s$, where $Q_a^s$ is the union of the subgroups $S_{\frac{n}{p_ip_a^s}}$ of $C_n$ for $i\in [r]\setminus\{a\}$.

\begin{proposition}\label{prop-Z-a-s}
For $r\geq 2$, $Z_a^s$ is a cut-set of $\mathcal{P}(C_n)$.
\end{proposition}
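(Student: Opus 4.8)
The plan is to exhibit an explicit separation of $\mathcal{P}(\overline{Z_a^s})$, i.e. to partition the vertex set $\overline{Z_a^s} = C_n \setminus Z_a^s$ into two nonempty disjoint sets with no edges between them. First I would identify which elements survive in the complement. An element $x \in C_n$ lies in $\overline{Z_a^s}$ precisely when $o(x)$ is not a divisor of any $\frac{n}{p_i p_a^s}$ (so $x \notin Q_a^s$) and $o(x) \notin \{n, \frac{n}{p_a}, \dots, \frac{n}{p_a^{s-1}}\}$. Writing $o(x) = \frac{n}{p_a^k} \cdot m$ with $\gcd(m, p_a) $ dividing things appropriately, one sees the surviving orders split into two natural families: those whose order is $\frac{n}{p_a^k}$ for some $k$ with $s \le k \le n_a$ (call this set $A$, the "large $p_a$-deficient but otherwise full" elements), and everything else in the complement (call it $B$).

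The key claim is then that $\overline{Z_a^s} = A \sqcup B$ is a separation. For this I would check two things. (i) Both $A$ and $B$ are nonempty: $A$ contains $E_{\frac{n}{p_a^s}}$, which is nonempty since $s \le n_a$; and $B$ is nonempty because, for instance, a generator of $S_{\frac{n}{p_a^s p_j}}$ for $j \ne a$ — wait, that lies in $Q_a^s$; instead take an element whose order is divisible by $p_a^{n_a}$ but not by $p_j^{n_j}$ for exactly one $j \ne a$ and such that it avoids $Z_a^s$, e.g. an element of $E_{\frac{n}{p_j}}$ when $r \ge 2$, which is not in $Q_a^s$ (its order is divisible by $p_a^{n_a}$) and not in any $E_{\frac{n}{p_a^l}}$. (ii) No edge joins $A$ and $B$: if $x \in A$ with $o(x) = \frac{n}{p_a^k}$, $s \le k \le n_a$, and $y \in B$ with $x \sim y$, then $o(x) \mid o(y)$ or $o(y) \mid o(x)$. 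If $o(y) \mid o(x) = \frac{n}{p_a^k}$, then $o(y)$ divides $\frac{n}{p_a^s}$, and since $y \notin Q_a^s$ the only possibility left is $o(y) = \frac{n}{p_a^l}$ for some $s \le l \le n_a$, putting $y \in A$, a contradiction. If $o(x) \mid o(y)$, then $\frac{n}{p_a^k} \mid o(y)$, so $o(y)$ is divisible by $p_i^{n_i}$ for all $i \ne a$, forcing $o(y) = \frac{n}{p_a^l}$ for some $l \le k \le n_a$; if $l \ge s$ then $y \in A$, and if $l < s$ then $y \in E_{\frac{n}{p_a^l}} \subseteq Z_a^s$, so $y \notin \overline{Z_a^s}$ — either way a contradiction. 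This shows there are no edges between $A$ and $B$, hence $\mathcal{P}(\overline{Z_a^s})$ is disconnected and $Z_a^s$ is a cut-set.

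I would organize the write-up by (a) recording the description of $\overline{Z_a^s}$ in terms of element orders, (b) defining $A$ and $B$, (c) verifying nonemptiness of each, and (d) doing the short adjacency case analysis above. The only mildly delicate point — and the place I'd be most careful — is the bookkeeping on orders: correctly arguing that an order divisible by every $p_i^{n_i}$ with $i \ne a$ must be of the form $\frac{n}{p_a^l}$, and tracking the three zones $l < s$ (inside $Z_a^s$ via the $E$-layers), $s \le l \le n_a$ (set $A$), and the "not fully $p_a$-deficient" remainder (set $B$, further cut down by removing $Q_a^s$). No inequalities or cardinality computations are needed for this proposition; those enter only later when comparing sizes of the various cut-sets.
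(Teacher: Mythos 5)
Your proposal is correct and follows essentially the same route as the paper: you take $A=E_{\frac{n}{p_a^{s}}}\cup\cdots\cup E_{\frac{n}{p_a^{n_a}}}$ and $B=\overline{Z_a^s}\setminus A$, and rule out edges between them by the same order-divisibility bookkeeping the paper uses. The only difference is cosmetic (your case analysis on which order divides which, versus the paper's direct description of the orders occurring in $A$ and $B$), and your nonemptiness check via $E_{\frac{n}{p_j}}$, $j\neq a$, is fine.
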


\begin{proof}
Take $A=E_{\frac{n}{p_a^{s}}}\cup E_{\frac{n}{p_a^{s+1}}}\cup\ldots\cup E_{\frac{n}{p_a^{n_a}}}$ and $B=C_n\setminus (Z_a^s\cup A)$. Then $A,B$ are nonempty disjoint subsets of $\overline{Z_a^s}$ and $A\cup B=\overline{Z_a^s}$.

For $x\in \overline{Z_a^s}$, the order of $x$ is of the form $p_1^{l_1}\cdots p_a^{l_a}\cdots p_r^{l_r}$, where $0\leq l_i\leq n_i$ for $i\in [r]$. Note that $x\in A$ if and only if $l_i=n_i$ for every $i\in [r]\setminus\{a\}$ and $l_a\in\{0,1,\dots,n_a -s\}$. On the other hand, $x\in B$ if and only if $l_i\neq n_i$ for at least one $i\in [r]\setminus\{a\}$ and $l_a\in\{n_a- s+1,\ldots, n_a\}$. It follows that the order of an element of $A$ does not divide the order of any element of $B$ and vice versa. So there is no edge of $\mathcal{P}(C_n)$ containing vertices from both $A$ and $B$.
Thus $A\cup B$ is a separation of $\mathcal{P}(\overline{Z_a^s})$ and hence $Z_a^s$ is a cut-set of $\mathcal{P}(C_n)$.
\end{proof}

\noindent The following is a consequence of Proposition \ref{prop-Z-a-s}.

\begin{corollary}\label{cor-Z-a-s}
If $r\geq 2$ and $X$ is a minimum cut-set of $\mathcal{P}(C_n)$, then $|X|\leq |Z_a^s|$ for every $a\in [r]$ and $s\in [n_a]$.
\end{corollary}

\begin{proposition}\label{size-Z-a-s}
$|Z_a^s| =\phi(n) + \frac{n}{p_1p_2\cdots p_r}\times \frac{1}{p_a^{s-1}}\times \left[\frac{p_1p_2\cdots p_{r}}{p_a} +\phi\left(\frac{p_1p_2\cdots p_{r}}{p_a}\right)\left(p_a^{s-1}-2\right) \right]$.
\end{proposition}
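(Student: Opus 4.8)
The plan is to compute $|Z_a^s|$ directly by decomposing it into the pieces appearing in its definition and adding the cardinalities, being careful about disjointness. Recall that
$$Z_a^s = E_n \cup E_{\frac{n}{p_a}} \cup E_{\frac{n}{p_a^2}} \cup \cdots \cup E_{\frac{n}{p_a^{s-1}}} \cup Q_a^s,$$
and observe first that this is a disjoint union: the sets $E_{\frac{n}{p_a^l}}$ for $0 \le l \le s-1$ are pairwise disjoint (distinct element orders), while every element of $Q_a^s$ lies in some $S_{\frac{n}{p_i p_a^s}}$ and hence has order dividing $\frac{n}{p_i p_a^s}$ for some $i \ne a$; in particular its order is a proper divisor of $\frac{n}{p_a^{s-1}}$ that is also not of the form $\frac{n}{p_a^l}$ with $l \le s-1$ (since it misses the full power of $p_i$), so it cannot lie in any $E_{\frac{n}{p_a^l}}$ with $l \le s-1$. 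Therefore $|Z_a^s| = \sum_{l=0}^{s-1} \phi\!\left(\frac{n}{p_a^l}\right) + |Q_a^s|$.

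Next I would evaluate the first sum using Lemma \ref{lem-2.1} with $k = 0$ and upper limit $s-1$. Since $s - 1 < n_a$ (as $s \le n_a$ forces $s - 1 \le n_a - 1 < n_a$), the second case of Lemma \ref{lem-2.1} applies, giving
$$\sum_{l=0}^{s-1} \phi\!\left(\frac{n}{p_a^l}\right) = \frac{n}{p_1 p_2 \cdots p_r} \times p_a \times \phi\!\left(\frac{p_1 p_2 \cdots p_r}{p_a}\right)\left(1 - \frac{1}{p_a^{s}}\right),$$
where I have substituted $k=0$ so that $\frac{1}{p_a^{k-1}} = p_a$ and $s - k + 1 = s + 1$; wait — one must be slightly careful that Lemma \ref{lem-2.1} is stated for $0 \le k$, and $k=0$ gives the factor $p_a^{-(k-1)} = p_a$, which is fine. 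Simplifying, this equals $\frac{n}{p_1\cdots p_r}\times \phi\!\left(\frac{p_1\cdots p_r}{p_a}\right)\left(p_a - \frac{1}{p_a^{s-1}}\right)$.

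Then I would add $|Q_a^s|$ from Lemma \ref{lem-Q-a-s}, namely $\frac{n}{p_1\cdots p_r} \times \frac{1}{p_a^{s-1}} \times \left[\frac{p_1\cdots p_r}{p_a} - \phi\!\left(\frac{p_1\cdots p_r}{p_a}\right)\right]$, and combine. Factoring out $\frac{n}{p_1\cdots p_r} \times \frac{1}{p_a^{s-1}}$ from the sum of the two contributions, the bracketed quantity becomes
$$\phi\!\left(\frac{p_1\cdots p_r}{p_a}\right)\left(p_a^s - 1\right) + \frac{p_1\cdots p_r}{p_a} - \phi\!\left(\frac{p_1\cdots p_r}{p_a}\right) = \frac{p_1\cdots p_r}{p_a} + \phi\!\left(\frac{p_1\cdots p_r}{p_a}\right)\left(p_a^s - 2\right).$$
Hmm, this gives $p_a^s - 2$ rather than the $p_a^{s-1} - 2$ in the statement, so I would recheck the arithmetic: the discrepancy suggests that the correct reading of the first sum is $\sum_{l=0}^{s-1}$ with the telescoping factor producing $p_a^{s-1}$ somewhere, and the cleanest route is to instead apply Lemma \ref{lem-2.1} with $k=1$ together with the separate term $\phi(n) = |E_n|$, i.e. split $Z_a^s = E_n \sqcup \big(\bigcup_{l=1}^{s-1} E_{\frac{n}{p_a^l}}\big) \sqcup Q_a^s$, use Lemma \ref{lem-2.1} with $k=1$ and upper limit $s-1$ to get $\frac{n}{p_1\cdots p_r}\phi\!\left(\frac{p_1\cdots p_r}{p_a}\right)\left(1 - \frac{1}{p_a^{s-1}}\right)$ for the middle block, add $\phi(n)$ and $|Q_a^s|$, and factor out $\frac{n}{p_1\cdots p_r}\times\frac{1}{p_a^{s-1}}$. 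The main (indeed only) obstacle is this bookkeeping: getting the index ranges, the $k$-value fed into Lemma \ref{lem-2.1}, and the common factor exactly right so that the $\phi(n)$ term separates cleanly and the coefficient of $\phi\!\left(\frac{p_1\cdots p_r}{p_a}\right)$ collapses to $p_a^{s-1} - 2$; once the pieces are lined up, the result is immediate.
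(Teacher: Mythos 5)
Your final route is correct and is essentially the paper's own proof: write $Z_a^s$ as the disjoint union of $E_n$, the sets $E_{\frac{n}{p_a^l}}$ for $1\le l\le s-1$, and $Q_a^s$, evaluate the middle block via Lemma \ref{lem-2.1} with $k=1$, and add $|Q_a^s|$ from Lemma \ref{lem-Q-a-s}. The discrepancy that prompted your mid-proof recheck was illusory: your first computation (absorbing $E_n$ into the sum, producing the bracket with $p_a^{s}-2$) already agrees with the stated formula, since $\phi(n)=\frac{n}{p_1p_2\cdots p_r}\times\frac{1}{p_a^{s-1}}\times\phi\left(\frac{p_1p_2\cdots p_r}{p_a}\right)\left(p_a^{s}-p_a^{s-1}\right)$, so keeping $\phi(n)$ outside the bracket exactly converts $p_a^{s}-2$ into $p_a^{s-1}-2$.
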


\begin{proof}
Note that $Z_a^s$ is a disjoint union of the sets $E_n, E_{\frac{n}{p_a}}, E_{\frac{n}{p_a^2}}, \ldots, E_{\frac{n}{p_a^{s-1}}}$ and $Q_a^s$. We have $|E_n|=\phi(n)$. By Lemmas \ref{lem-2.1} and \ref{lem-Q-a-s}, we get
$$\underset{l=1}{\overset{s-1}{\sum}} \left\vert E_{\frac{n}{p_a^{l}}} \right\vert = \frac{n}{p_1p_2\cdots p_r}\times \frac{1}{p_a^{s-1}}\times \phi\left(\frac{p_1\cdots p_r}{p_a} \right) \left(p_a^{s -1}-1 \right)$$
and
$$|Q_a^s| = \frac{n}{p_1p_2\cdots p_r}\times\frac{1}{p_a^{s-1}}\times \left[\frac{p_1\cdots p_r}{p_a} - \phi\left(\frac{p_1\cdots p_r}{p_a}\right) \right].$$
It then follows that $|Z_a^s| =\phi(n) + \frac{n}{p_1p_2\cdots p_r}\times \frac{1}{p_a^{s-1}}\times \left[\frac{p_1p_2\cdots p_{r}}{p_a} +\phi\left(\frac{p_1p_2\cdots p_{r}}{p_a}\right)\left(p_a^{s-1}-2\right) \right]$.
\end{proof}

\begin{proposition}\label{prop-comp-Z-a-s}
Let $r\geq 3$, $a\in [r]$ and $n_a\geq 2$. Then the following hold:
\begin{enumerate}
\item[(i)] If $2\phi\left(\frac{p_1p_2\cdots p_r}{p_a}\right)>\frac{p_1p_2\cdots p_r}{p_a}$, then $\left| Z_a^1\right| <\left| Z_a^2\right| <\cdots < \left| Z_a^{n_a}\right|$.
\item[(ii)] If $2\phi\left(\frac{p_1p_2\cdots p_r}{p_a}\right)<\frac{p_1p_2\cdots p_r}{p_a}$, then $\left| Z_a^1\right|>\left| Z_a^2\right|>\cdots > \left| Z_a^{n_a}\right|$.
\end{enumerate}
\end{proposition}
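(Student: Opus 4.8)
The plan is to start from the closed form for $|Z_a^s|$ established in Proposition \ref{size-Z-a-s} and isolate the dependence on $s$. Writing $N=\frac{n}{p_1p_2\cdots p_r}$, $M=\frac{p_1p_2\cdots p_r}{p_a}$ and $P=\phi\left(\frac{p_1p_2\cdots p_r}{p_a}\right)$, Proposition \ref{size-Z-a-s} reads
$$|Z_a^s|=\phi(n)+N\cdot\frac{1}{p_a^{s-1}}\cdot\bigl[M+P(p_a^{s-1}-2)\bigr]=\phi(n)+NP+N(M-2P)\,p_a^{\,1-s}.$$
Thus the only term depending on $s$ is $N(M-2P)\,p_a^{\,1-s}$, and everything reduces to tracking how this term changes with $s$.

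Next I would compare consecutive values. For $1\le s\le n_a-1$,
$$|Z_a^{s+1}|-|Z_a^s|=N(M-2P)\bigl(p_a^{-s}-p_a^{\,1-s}\bigr)=-N(M-2P)(p_a-1)\,p_a^{-s}.$$
Since $N>0$, $p_a-1>0$ and $p_a^{-s}>0$, the sign of $|Z_a^{s+1}|-|Z_a^s|$ equals the sign of $2P-M=2\phi\left(\frac{p_1p_2\cdots p_r}{p_a}\right)-\frac{p_1p_2\cdots p_r}{p_a}$, which by the hypotheses of the two cases is respectively positive or negative (in particular nonzero).

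Finally, in case (i) we have $2P>M$, so $|Z_a^{s+1}|-|Z_a^s|>0$ for every $s\in[n_a-1]$; since $n_a\ge 2$ this yields the chain $|Z_a^1|<|Z_a^2|<\cdots<|Z_a^{n_a}|$. In case (ii) we have $2P<M$, so each difference is strictly negative and the inequalities are reversed, giving $|Z_a^1|>|Z_a^2|>\cdots>|Z_a^{n_a}|$. The whole argument is elementary algebra; there is no genuine obstacle, and the only step that needs care is the rearrangement in the first paragraph, after which the $p_a^{\,1-s}$ factor carries all the $s$-dependence and the strict monotonicity is immediate from the positivity of $N$, $p_a-1$ and $p_a^{-s}$.
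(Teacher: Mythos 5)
Your proof is correct and follows essentially the same route as the paper: both start from the closed form of $|Z_a^s|$ in Proposition \ref{size-Z-a-s} and compare consecutive values $|Z_a^s|$ and $|Z_a^{s+1}|$, the sign of the difference being governed by $\frac{p_1p_2\cdots p_r}{p_a}-2\phi\left(\frac{p_1p_2\cdots p_r}{p_a}\right)$. Your rearrangement isolating the $s$-dependence in the single term $N(M-2P)p_a^{1-s}$ is just a mild repackaging of the paper's computation of $\left|Z_a^s\right|-\left|Z_a^{s+1}\right|$, so no further comment is needed.
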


\begin{proof}
Since $n_a\geq 2$, we have $n_a-1\geq 1$. For $s\in \{1,2,\ldots,n_a -1\}$, it can be calculated using Proposition \ref{size-Z-a-s} that
\begin{align*}
\left| Z_a^s\right| -\left| Z_a^{s+1}\right|&= \frac{n}{p_1\cdots p_r} \times \left(\frac{1}{p_a^{s-1}}-\frac{1}{p_a^{s}}\right)\left[ \frac{p_1p_2\cdots p_r}{p_a} - 2\phi\left(\frac{p_1p_2\cdots p_r}{p_a}\right)\right].
\end{align*}
Since $r\geq 3$, we have $2\phi\left(\frac{p_1p_2\cdots p_r}{p_a}\right)\neq \frac{p_1p_2\cdots p_r}{p_a}$ and so the proposition follows.
\end{proof}

\subsection{The cut-sets $X_{a,b}^{s,t}$}\label{sec-X-a-b-s-t}

Let $r\geq 3$, $a,b\in [r]$ with $a\neq b$, $s\in [n_a]$ and $t\in [n_b]$. Recall that $X_{a,b}^{s,t}=H_{a,b}^{s,t}\bigcup K_{a,b}^{s,t}$, where $K_{a,b}^{s,t}$ is the set of all the nongenerators of the cyclic subgroup $S_{\frac{n}{p_a^s p_b^t}}$ of $C_n$ and $H_{a,b}^{s,t}$ is the union of the mutually disjoint sets $E_{\frac{n}{p_a^ip_b^j}}$ with $0\leq i\leq s$, $0\leq j\leq t$ and $(i,j)\neq (s,t)$. The set $E_n$ corresponds to $(i,j)=(0,0)$.

\begin{proposition}\label{prop-X-a-b-s-t}
For $r\geq 3$, $X_{a,b}^{s,t}$ is a cut-set of $\mathcal{P}(C_n)$.
\end{proposition}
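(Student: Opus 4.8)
The plan is to mimic the structure of the proof of Proposition~\ref{prop-Z-a-s}: exhibit an explicit separation of the induced subgraph $\mathcal{P}(\overline{X_{a,b}^{s,t}})$ by partitioning the surviving vertices according to the exponents of $p_a$ and $p_b$ appearing in their orders. First I would fix the notation: for $x\in C_n$, write $o(x)=p_1^{l_1}\cdots p_r^{l_r}$ with $0\le l_i\le n_i$, and note that $x\in K_{a,b}^{s,t}$ precisely when $o(x)$ properly divides $\frac{n}{p_a^sp_b^t}$, i.e.\ $l_a\le s$, $l_b\le t$, all other $l_i=n_i$, and $(l_a,l_b)\ne(s,t)$ while not all of $l_i$ ($i\ne a,b$) equal $n_i$ --- actually $K_{a,b}^{s,t}$ is the set of non-generators of $S_{n/(p_a^sp_b^t)}$, so $o(x)\mid \frac{n}{p_a^sp_b^t}$ and $o(x)\ne\frac{n}{p_a^sp_b^t}$. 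Similarly $H_{a,b}^{s,t}$ collects exactly those $x$ with $l_i=n_i$ for all $i\ne a,b$, $l_a\le s$, $l_b\le t$, and $(l_a,l_b)\ne(s,t)$. Unioning, $X_{a,b}^{s,t}$ consists of all $x$ with $l_a\le s$, $l_b\le t$, and $(l_a,l_b)\ne(s,t)$ --- in other words, everything in $S_{n/(p_a^sp_b^t)}$ except its generators $E_{n/(p_a^sp_b^t)}$.

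Next I would define the two blocks of the separation. Set
$$A:=E_{\frac{n}{p_a^sp_b^t}},\qquad B:=\overline{X_{a,b}^{s,t}}\setminus A,$$
so $A\cup B=\overline{X_{a,b}^{s,t}}$ and the two sets are disjoint; $A$ is nonempty since $\phi\bigl(\frac{n}{p_a^sp_b^t}\bigr)\ge 1$, and $B$ is nonempty because, e.g., $E_n\subseteq X_{a,b}^{s,t}$ is removed but $E_{n/p_c}\subseteq B$ for any prime divisor $p_c$ with $c\notin\{a,b\}$ (using $r\ge 3$), or more simply any element of order $\frac{n}{p_a^{s+1}}$ lies in $B$ when $s<n_a$, and a symmetric choice otherwise; I will just name one explicit element of $B$. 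The key point is that $A$ is a clique-like "layer" at order exactly $d:=\frac{n}{p_a^sp_b^t}$, and every vertex of $B$ has order $m$ with the property that $m\nmid d$ and $d\nmid m$: indeed if $x\in B$ then $x\notin X_{a,b}^{s,t}$ forces $o(x)\notin S_d$ properly, so either $l_a>s$, or $l_b>t$, or some $l_i<n_i$ with $i\ne a,b$; in each of these cases $o(x)\nmid d$, and since $x\notin A$ we do not have $o(x)=d$; meanwhile $d\nmid o(x)$ whenever $l_a>s$ or $l_b>t$ (as $d$ carries the full $p_i^{n_i}$ for $i\ne a,b$ but only $p_a^{\,n_a-s}p_b^{\,n_b-t}$... wait, $d=\frac{n}{p_a^sp_b^t}$ has $p_a$-exponent $n_a-s$, so $d\mid o(x)$ would need $l_a\ge n_a-s$, which can certainly happen). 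So the divisibility argument needs the right comparison.

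The main obstacle, then, is verifying cleanly that there is \emph{no} edge between $A$ and $B$, i.e.\ that for $y\in A$ (order exactly $d$) and $z\in B$, neither $o(z)\mid d$ nor $d\mid o(z)$. The first is immediate: $o(z)\mid d$ together with $z\notin X_{a,b}^{s,t}$ would force $o(z)=d$, contradicting $z\notin A$. For the second, $d\mid o(z)$ means $l_a(z)\ge n_a-s$, $l_b(z)\ge n_b-t$, and $l_i(z)=n_i$ for $i\ne a,b$; I claim this is incompatible with $z\notin X_{a,b}^{s,t}$. Here is where I need to be careful, and I suspect the actual statement and the actual definition of $B$ in the paper may differ from my first guess --- possibly $B$ should be the set of $z$ with $l_i(z)=n_i$ for all $i\ne a,b$ \emph{and} $(l_a,l_b)$ lying strictly above $(s,t)$ in the product order, together with a third block, making it a three-part separation, or the roles of "divides $d$" vs.\ "is divisible by $d$" get split. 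I would therefore first pin down, from the structure of $\overline{X_{a,b}^{s,t}}$, exactly which orders survive, draw the $(l_a,l_b)$-grid, and observe that removing the single generator-layer at $(s,t)$ disconnects the "staircase" $\{l_a\le s, l_b\le t\}\setminus\{(s,t)\}$ (whose orders all divide $d$) from the rest; the rest splits further by whether $l_a>s$ or $l_b>t$, but any two such parts may still be joined through a common divisor, so the honest separation is: $A'=\{\,x:\ l_a(x)\le s,\ l_b(x)\le t\,\}\setminus X_{a,b}^{s,t}$ versus $B'=\overline{X_{a,b}^{s,t}}\setminus A'$, and one checks no order in $A'$ is comparable (under $\mid$) to any order in $B'$. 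Concretely: orders in $A'$ all divide $d$ but are not equal to any order in $B'$ (those have some $l_a>s$ or $l_b>t$ or some deficient $l_i$, $i\ne a,b$), and conversely an order $m$ in $B'$ satisfies $d\nmid m$ — because if $d\mid m$ then $l_i(m)=n_i$ for all $i\ne a,b$, so the only way $m\notin X_{a,b}^{s,t}$ and $m\ne d$ is $l_a(m)>s$ or $l_b(m)>t$, which indeed forces $d\mid m$ to \emph{fail}... I will resolve this tension by checking both directions of comparability on the exponent vectors directly. Once the no-edge claim is established, $A'\cup B'$ is a separation of $\mathcal{P}(\overline{X_{a,b}^{s,t}})$, both parts are nonempty (exhibit one element each, using $r\ge 3$ to guarantee a prime outside $\{p_a,p_b\}$), and hence $X_{a,b}^{s,t}$ is a cut-set of $\mathcal{P}(C_n)$, completing the proof.
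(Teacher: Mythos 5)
Your opening move is the paper's: take $A=E_{\frac{n}{p_a^sp_b^t}}$ and $B=\overline{X_{a,b}^{s,t}}\setminus A$, and this separation does work. But your verification breaks down because you mis-translate the definition of $H_{a,b}^{s,t}$ into exponent coordinates, and you never recover. Elements of $E_{\frac{n}{p_a^ip_b^j}}$ have order whose $p_a$-exponent is $n_a-i$ and $p_b$-exponent is $n_b-j$ (with full exponents at all other primes), so $H_{a,b}^{s,t}$ is the set of elements whose order is a \emph{proper multiple} of $d:=\frac{n}{p_a^sp_b^t}$ — not, as you write, the elements with $l_a\le s$, $l_b\le t$. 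Consequently your summary ``$X_{a,b}^{s,t}$ is everything in $S_{n/(p_a^sp_b^t)}$ except its generators'' is false: that set is only $K_{a,b}^{s,t}$, while $X_{a,b}^{s,t}$ also contains $H_{a,b}^{s,t}$ (for instance all of $E_n$). This error is exactly why your check of the second comparability direction ($d\mid o(z)$ for $z\in B$) dissolves into the ``wait\ldots'' digression: with the correct reading it is a one-line argument, namely that $d\mid o(z)$ with $o(z)\ne d$ puts $z$ in $H_{a,b}^{s,t}\subseteq X_{a,b}^{s,t}$, contradicting $z\in B$, and symmetrically $o(z)\mid d$ with $o(z)\ne d$ puts $z$ in $K_{a,b}^{s,t}$. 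That is precisely the paper's proof: $X_{a,b}^{s,t}$ is the set of all vertices outside $E_d$ adjacent to (some, hence all) elements of $E_d$, so no edge joins $A$ and $B$.

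The fallback ``honest separation'' you propose, $A'=\{x: l_a(x)\le s,\ l_b(x)\le t\}\setminus X_{a,b}^{s,t}$, does not repair this: read literally on the exponents of $o(x)$ it is wrong (e.g.\ for $n=p_1p_2p_3p_4$, $a=1$, $b=2$, $s=t=1$ every element satisfies $l_a\le s$, $l_b\le t$, so $B'=\emptyset$ and there is no separation), and you never pin down an alternative reading; the proposal ends by deferring the key comparability check rather than performing it. What is salvageable and correct in your write-up: the choice $A=E_d$, the observation that $o(z)\mid d$ with $z\notin X_{a,b}^{s,t}\cup A$ is impossible, and the nonemptiness of $B$ via $E_{\frac{n}{p_c}}$ for $c\notin\{a,b\}$ (which uses $r\ge 3$ together with $s,t\ge 1$). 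With the corrected description of $H_{a,b}^{s,t}$ the remaining direction closes immediately and your argument becomes the paper's.
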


\begin{proof}
Note that for $x\in C_n$, we have $x\in H_{a,b}^{s,t}$ if and only if $o(x)\neq\frac{n}{p_a^s p_b^t}$ and $o(x)$ is divisible by $\frac{n}{p_a^s p_b^t}$. Further, $x\in K_{a,b}^{s,t}$ if and only if $o(x)\neq\frac{n}{p_a^s p_b^t}$ and $o(x)$ divides $\frac{n}{p_a^s p_b^t}$. Thus $H_{a,b}^{s,t}$ and $K_{a,b}^{s,t}$ are disjoint sets and $X_{a,b}^{s,t}$ is the set of all elements of $C_n\setminus E_{\frac{n}{p_a^s p_b^t}}$ which are adjacent with some (and hence all) elements of $E_{\frac{n}{p_a^s p_b^t}}$. Taking $A= E_{\frac{n}{p_a^s p_b^t}}$ and $B = C_n\setminus \left(A\cup X_{a,b}^{s,t}\right)$, it then follows that $X_{a,b}^{s,t}$ is a cut-set of $\mathcal{P}(C_n)$ with $A\cup B$ a separation of $\mathcal{P}\left(\overline{X_{a,b}^{s,t}}\right)$.
\end{proof}

\noindent The following is a consequence of Proposition \ref{prop-X-a-b-s-t}.

\begin{corollary}\label{cor-X-a-b-s-t}
If $r\geq 3$ and $X$ is a minimum cut-set of $\mathcal{P}(C_n)$, then $|X|\leq \left|X_{a,b}^{s,t}\right|$ for every $a,b\in [r]$ with $a\neq b$, $s\in [n_a]$ and $t\in [n_b]$.
\end{corollary}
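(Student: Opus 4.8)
The plan is to deduce this immediately from Proposition \ref{prop-X-a-b-s-t} together with the definition of a minimum cut-set recalled in the Introduction. Proposition \ref{prop-X-a-b-s-t} already asserts that, for every $a,b\in[r]$ with $a\neq b$, every $s\in[n_a]$ and every $t\in[n_b]$, the set $X_{a,b}^{s,t}$ is a cut-set of $\mathcal{P}(C_n)$; its proof exhibits the explicit separation $\overline{X_{a,b}^{s,t}}=A\cup B$ with $A=E_{\frac{n}{p_a^s p_b^t}}$ and $B=C_n\setminus\left(A\cup X_{a,b}^{s,t}\right)$, the hypothesis $r\geq 3$ ensuring that $B\neq\emptyset$ so that this is a genuine separation.

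Given that, the argument is a single line, exactly parallel to Corollary \ref{cor-Z-a-s}. By definition, if $X$ is a minimum cut-set of $\mathcal{P}(C_n)$ then $|X|\leq|Y|$ for \emph{every} cut-set $Y$ of $\mathcal{P}(C_n)$. First I would apply this with $Y=X_{a,b}^{s,t}$, which is a legitimate choice by Proposition \ref{prop-X-a-b-s-t}, obtaining $|X|\leq\left|X_{a,b}^{s,t}\right|$; since $a,b,s,t$ were arbitrary subject to the stated constraints, the corollary follows.

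There is essentially no obstacle here: all the substantive content has already been discharged in Proposition \ref{prop-X-a-b-s-t}, and what remains is purely definitional. The only point deserving a moment's care is the implicit appeal to the nonemptiness of $B$ in that separation, i.e. that $X_{a,b}^{s,t}$ is genuinely a \emph{proper} cut-set and not all of $C_n$; this is where $r\geq 3$ enters, and granting it the present statement is immediate.
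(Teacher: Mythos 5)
Your proposal is correct and matches the paper exactly: the paper states this corollary as an immediate consequence of Proposition \ref{prop-X-a-b-s-t}, and your argument — that a minimum cut-set $X$ satisfies $|X|\leq |Y|$ for every cut-set $Y$, applied with $Y=X_{a,b}^{s,t}$ — is precisely the intended (purely definitional) deduction.
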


\section{Distribution of the sets $E_{\frac{n}{p_i}},$ $i\in [r]$}\label{distribution}

We start with two results (Propositions \ref{useful} and \ref{dist}) which will be used frequently in the rest of this paper mostly without mention. The first one was proved in \cite[Lemma 2.3]{cps}.

\begin{proposition}[\cite{cps}]\label{dist}
Let $Y$ be a minimum cut-set of $\mathcal{P}(C_n)$. If $d$ is a positive divisor of $n$, then either $E_d$ is a subset of $Y$ or $E_d$ is disjoint from $Y$.
\end{proposition}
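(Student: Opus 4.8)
The plan is to argue by contradiction, using the structure of the "equal-order" cliques $E_d$ together with the diameter-two property of $\mathcal{P}(C_n)$. Suppose $Y$ is a minimum cut-set and that there is a divisor $d$ of $n$ with $E_d \cap Y \neq \emptyset$ but $E_d \not\subseteq Y$. Fix $x \in E_d \setminus Y$ and $y \in E_d \cap Y$. Since $\overline{Y} := C_n \setminus Y$ is disconnected, fix a separation $\overline{Y} = A \cup B$; say $x \in A$. The key observation is that every element of $E_d$ is adjacent to the same set of vertices of $\mathcal{P}(C_n)$ (two elements of the same order in a cyclic group are "twins": their closed neighbourhoods coincide, because adjacency is governed only by divisibility of orders). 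So if I could move $y$ out of $Y$ into $A$, I would produce a strictly smaller cut-set, contradicting minimality — provided the new set still separates.

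The main steps I would carry out are as follows. First, I would make precise the twin property: for $u, v \in E_d$, a vertex $w \in C_n \setminus \{u,v\}$ is adjacent to $u$ if and only if it is adjacent to $v$, since $o(w) \mid d$ or $d \mid o(w)$ in either case; and $u,v$ are themselves adjacent. Second, I would consider the set $Y' := (Y \setminus \{y\}) \cup \{w\}$ where I simultaneously push one carefully chosen vertex the other way if needed — but in fact the cleaner route is: let $Y^{*} := Y \setminus E_d$, and show $Y^{*}$ is still a cut-set, which contradicts $|Y^{*}| < |Y|$. To see $Y^{*}$ separates, note that $x \in E_d \setminus Y \subseteq A$, so all of $E_d$ lies in $\overline{Y^{*}}$; I claim $A^{*} := A \cup (E_d \setminus Y)$ together with $B^{*} := B$ (possibly after removing from $B$ any vertex forced to join $A^{*}$) still gives a separation. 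Here one uses that each newly released vertex of $E_d \cap Y$ has exactly the same neighbours as $x$, hence its neighbours in $\overline{Y^{*}}$ all lie in $A \cup E_d$; so adding it to the $A$-side creates no edge to $B$. One must also check $B^{*} \neq \emptyset$: if every vertex of $B$ were adjacent to some element of $E_d$, then since $x \in A$ is adjacent to every vertex $B$ would have to be joined to $A$ through $x$, contradicting that $A \cup B$ was a separation — so $B$ remains nonempty and untouched.

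The step I expect to be the main obstacle is verifying that after releasing all of $E_d \cap Y$ the graph genuinely stays disconnected, i.e. that no vertex of $B$ becomes adjacent to a vertex of $A \cup (E_d \setminus Y)$. The only candidates for new edges are edges to the released vertices in $E_d \cap Y$, and the twin property reduces these to edges to $x$; since $x \in A$ and $A \cup B$ was already edge-free between the parts, there are none. Thus the only real care is bookkeeping — confirming that $E_d$ does not straddle the original partition in a way that forces $A$ and $B$ together — and this is handled by the observation that all of $E_d$ is mutually adjacent, hence lies entirely on one side of any separation of $\mathcal{P}(\overline{Y^{*}})$. Putting these together yields a cut-set of size $|Y| - |E_d \cap Y| < |Y|$, contradicting the minimality of $Y$; hence $E_d \cap Y = \emptyset$ or $E_d \subseteq Y$.
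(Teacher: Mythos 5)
Your argument is correct: the twin property of $E_d$ (any two elements of order $d$ have the same closed neighbourhood, since adjacency depends only on divisibility of orders) shows that $Y\setminus E_d$ still admits the separation $\bigl(A\cup (E_d\cap Y)\bigr)\cup B$, contradicting minimality; the worry about vertices of $B$ being ``forced to join'' the other side is unnecessary, since $B$ is left untouched and any edge from a released vertex of $E_d\cap Y$ to $B$ would force an edge from $x\in A$ to $B$. The paper itself does not prove this statement but imports it from \cite{cps} (Lemma 2.3 there), where the argument is essentially this same twin-vertex argument, so your approach matches the source.
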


Recall that two distinct elements $x,y$ of $C_n$ are adjacent in $\mathcal{P}(C_n)$ if and only if $o(x)\mid o(y)$ or $o(y)\mid o(x)$. As a consequence, we have the following:
\begin{proposition}\label{useful}
Suppose that $Y$ is a cut-set of $\mathcal{P}(C_n)$ and $A\cup B$ is a separation of $\mathcal{P}(\overline{Y})$ such that $A$ contains an element of order $a$ and $B$ contains an element of order $b$. If $d$ is the greatest common divisor of $a$ and $b$, then $Y$ contains the unique subgroup $S_d$ of $C_n$.
\end{proposition}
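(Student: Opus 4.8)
The plan is to prove the contrapositive-flavoured statement that $S_d$ is disjoint from $\overline{Y}$, which is exactly the assertion $S_d\subseteq Y$. First I would record that the hypothesis makes sense: $a$ and $b$ are the orders of elements of the cyclic group $C_n$, so $a\mid n$ and $b\mid n$, whence $d=\gcd(a,b)$ divides $n$ and the (unique) subgroup $S_d$ of order $d$ indeed exists. Every $z\in S_d$ then satisfies $o(z)\mid d$, and hence $o(z)\mid a$ and $o(z)\mid b$ simultaneously.

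Then I would argue by contradiction. Suppose some $z\in S_d$ lies in $\overline{Y}=A\cup B$, and fix $x\in A$ with $o(x)=a$ and $y\in B$ with $o(y)=b$. If $z\in A$, then $z\neq y$ because $A$ and $B$ are disjoint (being the two parts of a separation); since $o(z)\mid d\mid b=o(y)$, the criterion for adjacency in $\mathcal{P}(C_n)$ gives that $z$ and $y$ are adjacent, contradicting the fact that $A\cup B$ is a separation of $\mathcal{P}(\overline{Y})$. The case $z\in B$ is entirely symmetric, using $x$ and $a$ in place of $y$ and $b$. So no such $z$ exists, and therefore $S_d\subseteq Y$.

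There is essentially no obstacle here; the only points that require (minor) care are confirming that $d\mid n$ so that $S_d$ is a genuine subgroup of $C_n$, and observing that the disjointness of $A$ and $B$ forces the hypothetical witness $z$ to be distinct from $x$ (respectively $y$), so that the divisibility relation between orders actually produces an edge of $\mathcal{P}(C_n)$ rather than being vacuous.
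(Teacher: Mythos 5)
Your proof is correct and follows exactly the argument the paper has in mind: the paper states this proposition as an immediate consequence of the adjacency criterion $o(x)\mid o(y)$ or $o(y)\mid o(x)$, and your contradiction argument (any $z\in S_d$ lying in $A$ or $B$ would be adjacent to the witness in the other part, violating the separation) is precisely that consequence spelled out. The care you take with $d\mid n$ and with $z$ being distinct from the witnesses is appropriate but routine; nothing further is needed.
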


Now let $X$ be a minimum cut-set of $\mathcal{P}(C_n)$ and $A\cup B$ be a separation of $\mathcal{P}(\overline{X})$. Then for every positive divisor $d$ of $n$, there are three possibilities for the set $E_d$: either $E_d\subseteq X$, $E_d\subseteq A$ or $E_d\subseteq B$. This follows as a consequence of Proposition \ref{dist}. We study the distribution of the sets $E_{\frac{n}{p_i}}$, $i\in [r]$, among $A,B$ and $X$.
Define three subsets $U,V, W$ of $[r]$ associated with $A,B,X$ respectively by
$$U:=\left\{i\in [r]: E_{\frac{n}{p_i}}\subseteq A\right\},\; V:=\left\{i\in [r]: E_{\frac{n}{p_i}}\subseteq B\right\},\; W:= \left\{i\in [r]: E_{\frac{n}{p_i}}\subseteq X\right\}.$$
Then $U$, $V$ and $W$ are pairwise disjoint and $|U|+|V|+|W|=r$. We prove the following result using the cut-set $X_{a,b}^{1,1}$ of $\mathcal{P}(C_n)$ for suitable $a,b\in [r]$.

\begin{proposition}\label{prop-two-atmost}
$|W|\in\{0,1,2\}$.
\end{proposition}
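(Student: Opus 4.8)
The plan is to suppose for contradiction that $|W|\geq 3$ and derive that $X$ cannot be a minimum cut-set, by exhibiting a strictly smaller cut-set. Recall that $W$ records which sets $E_{\frac{n}{p_i}}$ lie entirely inside $X$. So if $|W|\geq 3$, pick three indices in $W$; since we may relabel, it is convenient to single out the two of them with the largest primes, say $a,b\in W$ with $a<b$, so that $E_{\frac{n}{p_a}}\cup E_{\frac{n}{p_b}}\subseteq X$, and there is at least one further index $c\in W\setminus\{a,b\}$ with $E_{\frac{n}{p_c}}\subseteq X$. The idea is that $X$ being a cut-set forces it to contain rather a lot: it contains $E_n$ and $E_1$ automatically, it contains all three sets $E_{\frac{n}{p_a}},E_{\frac{n}{p_b}},E_{\frac{n}{p_c}}$, and — via Proposition \ref{useful} applied to the separation $A\cup B$ of $\mathcal{P}(\overline{X})$ — it must also contain various subgroups $S_d$ coming from greatest common divisors of orders realized in $A$ and in $B$.

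Next I would compare $|X|$ with $\bigl|X_{a,b}^{1,1}\bigr|$, using Corollary \ref{cor-X-a-b-s-t}, which tells us $|X|\leq \bigl|X_{a,b}^{1,1}\bigr|$. So it suffices to show that the hypothesis $|W|\geq 3$ forces $|X|>\bigl|X_{a,b}^{1,1}\bigr|$, a contradiction. For this I would estimate $|X|$ from below. Unpacking $X_{a,b}^{1,1}=H_{a,b}^{1,1}\cup K_{a,b}^{1,1}$: the set $H_{a,b}^{1,1}$ is $E_n\cup E_{\frac{n}{p_a}}\cup E_{\frac{n}{p_b}}$ (the three pairs $(i,j)$ with $0\leq i,j\leq 1$ other than $(1,1)$), and $K_{a,b}^{1,1}$ is the set of nongenerators of $S_{\frac{n}{p_ap_b}}$, whose size one computes from Lemma \ref{none-1} (with $I_1=[r]\setminus\{a,b\}$, so that $K_{a,b}^{1,1}$ is the union of the subgroups $S_{\frac{n}{p_ip_ap_b}}$... — more precisely, $K_{a,b}^{1,1}$ consists of all $x$ with $o(x)$ a proper divisor of $\frac{n}{p_ap_b}$, which is the union of the subgroups $S_{\frac{n}{p_jp_ap_b}}$ for $j\in[r]\setminus\{a,b\}$). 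On the other side, $X$ contains $E_n\cup E_{\frac{n}{p_a}}\cup E_{\frac{n}{p_b}}$ plus $E_{\frac{n}{p_c}}$ plus the subgroups forced by Proposition \ref{useful}. The key point is that an element of order $\frac{n}{p_c}$ cannot sit in $A$ or in $B$ unless pairing it against elements on the other side forces large subgroups into $X$; tracking this should yield that $X$ contains, beyond $E_n\cup E_{\frac{n}{p_a}}\cup E_{\frac{n}{p_b}}$, a copy of the nongenerators of $S_{\frac{n}{p_ap_b}}$ together with something at least as big as $E_{\frac{n}{p_c}}$ or an extra subgroup, so that $|X|\geq \bigl|X_{a,b}^{1,1}\bigr| + \bigl|E_{\frac{n}{p_c}}\bigr| > \bigl|X_{a,b}^{1,1}\bigr|$. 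The totient inequalities assembled in Section 2 — especially Lemma \ref{lem-to-use} and Lemma \ref{lem-two-atmost}, which compare a single $\phi(n/p_j)$ against a product-of-subgroups term — are exactly the tools needed to make the counting inequality go through regardless of which primes occur.

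The main obstacle I anticipate is the bookkeeping in the lower bound for $|X|$: one has to argue carefully about \emph{where} the sets $E_{\frac{n}{p_i p_j}}$ and smaller $E_d$'s can go (into $A$, $B$, or $X$) given that three of the top-layer sets $E_{\frac{n}{p_i}}$ are already in $X$, and show that no matter how $A$ and $B$ split the remaining vertices, enough subgroups $S_d$ are dragged into $X$ by Proposition \ref{useful} to beat $\bigl|X_{a,b}^{1,1}\bigr|$. A clean way to organize this is to fix one element $u\in A$ and one element $v\in B$ (these exist since $A,B$ are nonempty) and look at the divisor $\gcd(o(u),o(v))$; since neither $o(u)$ nor $o(v)$ can be $\frac{n}{p_a}$, $\frac{n}{p_b}$, $\frac{n}{p_c}$ or $n$, the gcd is bounded in a way that forces $S_d\subseteq X$ with $|S_d|$ large. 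I would also need the case distinction according to whether $p_a=2$ (so $p_b\geq 3$, etc.) to invoke the strict versions of the inequalities in (\ref{eqn-2}); this is routine but must be handled. Once the lower bound $|X|>\bigl|X_{a,b}^{1,1}\bigr|$ is established, the contradiction with Corollary \ref{cor-X-a-b-s-t} finishes the proof, giving $|W|\leq 2$, and $|W|\geq 0$ is trivial.
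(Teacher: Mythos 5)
Your overall strategy---assume $|W|\geq 3$ and contradict Corollary \ref{cor-X-a-b-s-t} by comparing $|X|$ with a cut-set of the form $X_{\cdot,\cdot}^{1,1}$---matches the paper's, but the way you propose to obtain the lower bound on $|X|$ has a genuine gap. Your central inequality $|X|\geq \bigl|X_{a,b}^{1,1}\bigr|+\bigl|E_{\frac{n}{p_c}}\bigr|$ requires $X$ to contain $K_{a,b}^{1,1}$ (or a set at least that large) in addition to $E_n\cup E_{\frac{n}{p_a}}\cup E_{\frac{n}{p_b}}\cup E_{\frac{n}{p_c}}$, and nothing forces this. Proposition \ref{useful} only places $S_d$ inside $X$ for $d=\gcd(o(u),o(v))$ with $u\in A$, $v\in B$, and such gcds can be small (the orders realized in $A$ and $B$ may be nearly coprime), so no large subgroup is automatically ``dragged into'' $X$; your sketch itself concedes that this bookkeeping is unfinished, and it is exactly the step that would fail.

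The paper avoids the issue entirely: if $E_{\frac{n}{p_j}},E_{\frac{n}{p_k}},E_{\frac{n}{p_l}}\subseteq X$ with $j<k<l$, then together with $E_n$ these four pairwise disjoint sets already give $|X|\geq |E_n|+\bigl|E_{\frac{n}{p_k}}\bigr|+\bigl|E_{\frac{n}{p_k}}\bigr|+\bigl|E_{\frac{n}{p_r}}\bigr|$ by (\ref{eqn-1}), and Lemma \ref{lem-two-atmost} gives $\bigl|E_{\frac{n}{p_k}}\bigr|=\phi\left(\frac{n}{p_k}\right)>\frac{n}{p_kp_r}-\phi\left(\frac{n}{p_kp_r}\right)=\bigl|K_{k,r}^{1,1}\bigr|$, whence $|X|>\bigl|X_{k,r}^{1,1}\bigr|$, contradicting Corollary \ref{cor-X-a-b-s-t}; no subgroup containment in $X$ is needed at all. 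Two details of your setup also matter: Lemma \ref{lem-two-atmost} is stated only for pairs $(j,r)$ with $j\in[r-1]$, so the comparison cut-set must have $r$ as its second index---this is why the paper replaces the largest of the three chosen indices by $r$ via (\ref{eqn-1})---whereas your choice of $a,b$ as the two largest primes of $W$ leaves you needing an inequality the lemma does not directly provide when $b<r$.
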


\begin{proof}
Suppose that $|W|\geq 3$. Then $X$ contains $E_{\frac{n}{p_j}}, E_{\frac{n}{p_k}}$ and $E_{\frac{n}{p_l}}$ for some $j,k,l\in [r]$ with $j<k<l$. Since $X$ contains the set $E_n$ also, we get
\begin{align*}
|X| & \geq |E_n|+\left| E_{\frac{n}{p_j}}\right| + \left| E_{\frac{n}{p_k}}\right| + \left| E_{\frac{n}{p_l}}\right|\geq |E_n|+\left| E_{\frac{n}{p_k}}\right| + \left| E_{\frac{n}{p_k}}\right| + \left| E_{\frac{n}{p_r}}\right|
\end{align*}
using (\ref{eqn-1}). The cut-set $X_{k,r}^{1,1}$ of $\mathcal{P}(C_n)$ is a disjoint union of $E_n$, $E_{\frac{n}{p_k}}$, $E_{\frac{n}{p_r}}$ and $K_{k,r}^{1,1}$. We have $\left|K_{k,r}^{1,1}\right|=\frac{n}{p_kp_r} - \phi\left(\frac{n}{p_kp_r}\right)$, as $K_{k,r}^{1,1}$ is the set of all the nongenerators of the cyclic subgroup $S_{\frac{n}{p_kp_r}}$ of $C_n$. By Lemma \ref{lem-two-atmost}, $\left| E_{\frac{n}{p_k}}\right|=\phi\left(\frac{n}{p_k}\right) > \frac{n}{p_kp_r} - \phi\left(\frac{n}{p_kp_r}\right) = \left|K_{k,r}^{1,1}\right|$. It then follows that $|X| > |E_n|+ \left| E_{\frac{n}{p_k}}\right| + \left| E_{\frac{n}{p_r}}\right| + \left|K_{k,r}^{1,1}\right| = \left|X_{k,r}^{1,1}\right|$, a contradiction to Corollary \ref{cor-X-a-b-s-t}.
\end{proof}

We note that Proposition \ref{prop-two-atmost} was also obtained in \cite[Proposition 3.10]{cps-2} using a different cut-set of $\mathcal{P}(C_n)$. The following proposition is proved by implicitly borrowing some of the arguments used in the proofs of \cite[Propositions 4.2, 4.3]{cps-2}.

\begin{proposition}\label{U-V-W}
If both $U$ and $V$ are nonempty, then $W$ is the empty set and one of $U, V$ must be the singleton set $\{r\}$.
\end{proposition}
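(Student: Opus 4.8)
The plan is to argue by contradiction on two fronts: first that $W \neq \emptyset$ is impossible when both $U$ and $V$ are nonempty, and then that the smaller of $U, V$ (in a sense to be made precise via the prime ordering) must reduce to $\{r\}$. Throughout I would fix the minimum cut-set $X$ and a separation $A \cup B$ of $\mathcal{P}(\overline{X})$ realizing $U, V, W$, and repeatedly compare $|X|$ against the sizes of the cut-sets $Z_a^1$ and $X_{a,b}^{s,t}$ using Corollaries \ref{cor-Z-a-s} and \ref{cor-X-a-b-s-t}.

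\emph{Step 1: $W = \emptyset$.} Suppose $i \in U$, $j \in V$ and $k \in W$. Since $E_{\frac{n}{p_i}} \subseteq A$ and $E_{\frac{n}{p_j}} \subseteq B$, and $\gcd\!\big(\tfrac{n}{p_i}, \tfrac{n}{p_j}\big) = \tfrac{n}{p_ip_j}$, Proposition \ref{useful} forces $S_{\frac{n}{p_ip_j}} \subseteq X$; similarly, because $E_{\frac{n}{p_k}} \subseteq X$ already and $E_n \subseteq X$, I would collect the elements whose orders are ``visible'' from both sides together with those trapped in $X$ to show $X$ must contain a large union of subgroups. The idea is that $X$ contains $E_n$, $E_{\frac{n}{p_k}}$, the subgroup $S_{\frac{n}{p_ip_j}}$, and in fact all $S_{\frac{n}{p_up_v}}$ for $u \in U$, $v \in V$. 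I would then exhibit a specific $X_{a,b}^{1,1}$ or $Z_a^1$ that is strictly smaller: roughly, since the element $E_{\frac{n}{p_k}}$ sitting inside $X$ is ``wasted'' (it need not be there for any separation of the $X_{a,b}^{1,1}$ type), one gets $|X| > |X_{a,b}^{1,1}|$ by an application of Lemma \ref{lem-two-atmost} exactly as in Proposition \ref{prop-two-atmost}. This contradicts Corollary \ref{cor-X-a-b-s-t}, so $W = \emptyset$.

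\emph{Step 2: one of $U, V$ is $\{r\}$.} With $W = \emptyset$ we have $U \cup V = [r]$, a disjoint partition. Now for every $u \in U$ and $v \in V$, Proposition \ref{useful} gives $S_{\frac{n}{p_up_v}} \subseteq X$, so $X$ contains the union $K$ of all these subgroups; also $E_n \subseteq X$. Write $t = |U|$, say, and assume for contradiction that neither $U$ nor $V$ equals $\{r\}$, i.e. both $U$ and $V$ have at least two elements, or one of them is a singleton $\{c\}$ with $c \neq r$. I would estimate $|K|$ from below using Lemma \ref{none-1} (applied with $I_1 = U$ or $I_1 = V$ and $a$ the appropriate index — or an iterated version over all pairs), getting $|X| \geq \phi(n) + |K|$ with $|K|$ essentially $\frac{n}{p_1\cdots p_r}\big[\prod_{u\in U}p_u \cdot \prod_{v \in V} p_v - \text{(cross terms)}\big]$ type quantity; the cleanest route is to note $K \supseteq \bigcup_{v \in V} S_{\frac{n}{p_cp_v}}$ for a fixed $c \in U$ and apply Lemma \ref{none-1} directly. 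Then I would compare against $|Z_r^1| = \phi(n) + |Q_r^1|$, where by Lemma \ref{lem-Q-a-s} the term $|Q_r^1|$ is the minimal one among the $|Q_a^1|$ (Lemma \ref{lem-Q-a-s-1}). Using Lemma \ref{lem-to-use} (with the roles of $a, b$ chosen so that $p_b > 2$, possible since $r \geq 4$ ensures at least two odd primes) I would show $|K| + \phi(n)$ exceeds $|Z_r^1|$ unless $V = \{r\}$ (or symmetrically $U = \{r\}$), contradicting Corollary \ref{cor-Z-a-s}. The point of Lemma \ref{lem-to-use} is precisely that $\big|E_{\frac{n}{p_a}}\big| + |T| > |Q_r^1|$, and $E_{\frac{n}{p_a}} \subseteq A$ while the relevant subgroups $T$ sit inside $X$, so $X$ beats $Z_r^1$ whenever the singleton side is not $\{r\}$.

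\emph{Main obstacle.} The delicate point is Step 2: the bookkeeping of \emph{which} subgroups $S_{\frac{n}{p_up_v}}$ are forced into $X$, and assembling their union's cardinality tightly enough to beat $|Z_r^1|$ in \emph{all} configurations of the partition $U \cup V = [r]$ other than $\{r\}$ being one block. One must be careful that the various $S_{\frac{n}{p_up_v}}$ overlap, so a crude sum over-counts; Lemma \ref{none-1} handles a ``star'' of such subgroups sharing a common index $a$, and the plan is to reduce to that case by fixing a single $c$ in the larger block and using only the subgroups through $c$, which already suffices because Lemma \ref{lem-to-use} was tailored for this. The secondary subtlety is ensuring the prime $p_b$ used in Lemma \ref{lem-to-use} can be chosen odd, which is why the hypothesis $r \geq 4$ (hence at least three odd primes among $p_1, \dots, p_r$, or at least two besides any fixed one) is exactly what is needed and where the argument would genuinely fail for $r = 3$.
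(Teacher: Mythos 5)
Your proposal has two genuine gaps, both at the places you flag as "the delicate point" but then dispose of by assertion. First, Step 1 cannot be settled "exactly as in Proposition \ref{prop-two-atmost}". That argument needs \emph{three} sets $E_{\frac{n}{p_j}}, E_{\frac{n}{p_k}}, E_{\frac{n}{p_l}}$ inside $X$: two of them are matched against the $E$-parts of $X_{k,r}^{1,1}$ and the spare one beats $K_{k,r}^{1,1}$ via Lemma \ref{lem-two-atmost}. When $|W|=1$ you only have $E_n$, one set $E_{\frac{n}{p_k}}$ and the forced subgroups $S_{\frac{n}{p_up_v}}$, and the comparison with $X_{i,j}^{1,1}$ you suggest can simply fail: for $n=2\cdot3\cdot5\cdot7$, $U=\{1\}$, $V=\{2,3\}$, $W=\{4\}$, the guaranteed part of $X$ has size $\phi(210)+\phi(30)+|S_{35}\cup S_{21}|=105$, while $\left|X_{1,2}^{1,1}\right|=131$. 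In the paper, ruling out $|W|=1$ is not a preliminary step at all; it needs the comparison with $Z_r^1$ via Lemma \ref{lem-to-use} (which is why the pair $s\in U$, $t\in V$ with $p_s>p_t$ is fixed at the outset, guaranteeing an odd common prime for the star of subgroups) and, when $|U|\geq 2$, an exchange argument, so it is intertwined with determining $|U|=1$ rather than separable as your Step 1.

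Second, Step 2 misuses Lemma \ref{lem-to-use} and omits the key exchange idea. The lemma bounds $\left|E_{\frac{n}{p_a}}\right|+|T|$, so it only helps when $E_{\frac{n}{p_a}}\subseteq X$, i.e.\ when $a\in W$; in your $W=\emptyset$ situation you place $E_{\frac{n}{p_a}}$ in $A$, and elements of $A$ cannot be charged to $|X|$, so the inequality gives no lower bound on $|X|$. Moreover, the reduction to a single star through a fixed $c$ in "the larger block" is not enough: with $n=2\cdot3\cdot5\cdot7$, $U$ the indices of $\{5,7\}$, $V$ those of $\{2,3\}$ and $c$ the index of $7$, the bound is $\phi(210)+|S_{15}\cup S_{10}|=68<70=\left|Z_4^1\right|$, so no contradiction arises, and you give no rule for choosing a good $c$ nor a proof that one exists in every configuration. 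The paper's proof supplies exactly the missing mechanism: with $p_s>p_t$ it compares the forced union $D\cup D_1$ (through $p_t$ and the elements of $U\setminus\{s\}$) with the hypothetical union $D\cup D_2=Q_s^1$ using Lemma \ref{none-1}, showing $|D\cup D_1|>|Q_s^1|$ when $|U|\geq 2$, and then forces $s=r$ by $\left|Z_s^1\right|>\left|Z_r^1\right|$ (Lemma \ref{lem-Q-a-s-1}); your sketch never establishes that the singleton side must be $\{r\}$ beyond asserting it. (A minor point: Lemma \ref{lem-to-use} already holds for $r\geq 3$; the hypothesis $r\geq 4$ is needed elsewhere, e.g.\ to have two indices of $W$ disjoint from $\{s,t\}$ when $|W|=2$.)
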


\begin{proof}
Fix $s\in U$ and $t\in V$. Without loss of generality, we may assume that $p_s>p_t$. Then $p_s >2$. We show that $s=r$, $U=\{r\}$ and $W=\emptyset$.

Let $D$ be the union of the subgroups $S_{\frac{n}{p_s p_l}}$ of $C_n$, where $l\in V$.
Since $A\cup B$ is a separation of $\mathcal{P}(\overline{X})$, the subgroups involved in $D$ must be contained in $X$ and so $D$ is a subset of $X$. By Proposition \ref{prop-two-atmost}, we have $|W|\leq 2$. We complete the proof in four steps.\medskip

{\bf (I)}: We claim that $|W|\neq 2$. Suppose $W=\{a,b\}$ for some $a,b\in [r]\setminus\{s,t\}$ with $a < b$. Then $a\leq r-1$ and $b\leq r$. So
$\left|E_{\frac{n}{p_a}}\right|\geq \left|E_{\frac{n}{p_{r-1}}}\right|$ and $\left|E_{\frac{n}{p_b}}\right| \geq \left|E_{\frac{n}{p_r}}\right|$ by (\ref{eqn-1}). As $D$ contains $S_{\frac{n}{p_s p_t}}$, we have
$|D|\geq \left|S_{\frac{n}{p_s p_t}}\right|=\frac{n}{p_sp_t}\geq \frac{n}{p_{r-1}p_r}=\left|S_{\frac{n}{p_{r-1} p_r}}\right|>\left|K_{r-1,r}^{1,1}\right|.$
Since $X$ contains the mutually disjoint sets $E_n$, $E_{\frac{n}{p_a}}$, $E_{\frac{n}{p_b}}$ and $D$, we get
\begin{align*}
|X| & \geq |E_n|+\left|E_{\frac{n}{p_a}}\right|+\left|E_{\frac{n}{p_b}}\right|+|D| > |E_n|+\left|E_{\frac{n}{p_{r-1}}}\right|+\left|E_{\frac{n}{p_r}}\right|+\left|K_{r-1,r}^{1,1}\right|.
\end{align*}
The cut-set $X_{r-1,r}^{1,1}$ of $\mathcal{P}(C_n)$ is the union of mutually disjoint sets $E_n$, $E_{\frac{n}{p_{r-1}}}$, $E_{\frac{n}{p_r}}$ and $K_{r-1,r}^{1,1}$. It then follows that $|X|> \left|X_{r-1,r}^{1,1}\right|$, a contradiction to Corollary \ref{cor-X-a-b-s-t}.\medskip

{\bf (II)}: We claim that $(|U|, |W|)\neq (1,1)$. Otherwise, $U=\{s\}$ and $W=\{a\}$ for some $a\in [r]\setminus\{s,t\}$. Then $V=[r]\setminus\{s,a\}$ and so $D$ is the union of the $r-2$ subgroups $S_{\frac{n}{p_sp_j}}$, $j\in [r]\setminus\{s,a\}$, of $C_n$. Since $p_s >2$, we have $\left|E_{\frac{n}{p_a}} \right| + |D| > \left|Q_r^1\right|$ by Lemma \ref{lem-to-use}.
Since $X$ contains the mutually disjoint sets $E_n$, $E_{\frac{n}{p_a}}$ and $D$, we get
$|X| \geq |E_n|+ \left|E_{\frac{n}{p_a}} \right| + |D| >|E_n|+ |Q_r^1|=\left|Z_r^1\right|$, a contradiction to Corollary \ref{cor-Z-a-s}.\medskip

{\bf (III)}: We claim that $|U|=1$. Suppose this is not the case. Then $|U|\geq 2$. Define the sets $D_1$ and $D_2$ by
$$D_1=\underset{k\in U\setminus\{s\}}{\bigcup} S_{\frac{n}{p_k p_t}},\;\; D_2=\underset{k\in U\setminus\{s\}}{\bigcup} S_{\frac{n}{p_sp_k}}.$$
Note that $D_1$ and $D_2$ are nonempty sets as $|U|\geq 2$. Since $A\cup B$ is a separation of $\mathcal{P}(\overline{X})$, the subgroups involved in $D_1$ must be contained in $X$. Thus $D\cup D_1$ is a subset of $X$.
We assert that $|D\cup D_1|> |D\cup D_2|$.

If $x\in D\cap D_1$, then $x\in S_{\frac{n}{p_s p_l}}\bigcap S_{\frac{n}{p_k p_t}}$ for some $l\in V$ and $k\in U\setminus\{s\}$. Since $S_{\frac{n}{p_s p_l}}\bigcap S_{\frac{n}{p_k p_t}}$ is a subgroup of $S_{\frac{n}{p_s p_kp_t}} = S_{\frac{n}{p_s p_t}}\bigcap S_{\frac{n}{p_s p_k}}$, we get $x\in D\cap D_2$. Thus $D\cap D_1$ is a subset of $D\cap D_2$ and hence $|D\cap D_1|\leq |D\cap D_2|$.

Put $R_1=[r]\setminus ((U\setminus\{s\})\cup \{t\})$ and $R_2=[r]\setminus U=[r]\setminus ((U\setminus\{s\})\cup \{s\})$. Observe that $t\notin R_1$, $t\in R_2$, $s\in R_1$, $s\notin R_2$ and $R_1\setminus\{s\}=R_2\setminus\{t\}$. Since $p_s >p_t$ by our assumption, we have $\underset{j\in R_1}\prod p_j > \underset{j\in R_2}\prod p_j$. Then using Lemma \ref{none-1}, we get
\begin{align*}
|D_1| & =\frac{n}{p_1p_2\cdots p_r}\times \underset{j\in R_1}\prod p_j \times \left[\underset{k\in U\setminus\{s\}}\prod p_k - \phi\left(\underset{k\in U\setminus\{s\}}\prod p_k\right)\right]\\
 & > \frac{n}{p_1p_2\cdots p_r}\times \underset{j\in R_2}\prod p_j \times \left[\underset{k\in U\setminus\{s\}}\prod p_k - \phi\left(\underset{k\in U\setminus\{s\}}\prod p_k\right)\right]=|D_2|.
\end{align*}
Therefore, $|D\cup D_1|=|D|+|D_1|-|D\cap D_1|> |D|+|D_2|-|D\cap D_2|= |D\cup D_2|$.

If $W=\emptyset$, then $U\cup V=[r]$ and so $D\cup D_2=Q_s^1$. Since $X$ contains $E_n$ and $D\cup D_1$, we get $|X|\geq |E_n|+ |D\cup D_1|>|E_n|+ |D\cup D_2|= |E_n|+ |Q_s^1|=\left|Z_s^1\right|$, a contradiction to Corollary \ref{cor-Z-a-s}.

If $|W|=1$, then $W=\{a\}$ for some $a\in [r]\setminus\{s,t\}$. In this case, $U\cup V=[r]\setminus\{a\}$ and so $D\cup D_2$ is the union of the $r-2$ subgroups $S_{\frac{n}{p_sp_j}}$, $j\in [r]\setminus\{s,a\}$, of $C_n$. As $p_s >2$, we have $\left|E_{\frac{n}{p_a}} \right| + |D\cup D_2| > \left|Q_r^1\right|$ by Lemma \ref{lem-to-use}.
Since $X$ contains the mutually disjoint sets $E_n$, $E_{\frac{n}{p_a}}$ and $D\cup D_1$, we get
$|X| \geq |E_n|+ \left|E_{\frac{n}{p_a}} \right| + |D\cup D_1|  >|E_n|+ \left|E_{\frac{n}{p_a}} \right| +|D\cup D_2|>|E_n| + |Q_r^1|=\left|Z_r^1\right|$, a contradiction to Corollary \ref{cor-Z-a-s}.\medskip

{\bf (IV)}: We claim that $s=r$. By the previous three steps, we get $U=\{s\}$ and $W=\emptyset$. So $V=[r]\setminus\{s\}$ and hence $D=Q_s^1$. If $s\neq r$, then using Lemma \ref{lem-Q-a-s-1}, we get $|X|\geq |E_n|+ |D|=|E_n|+ |Q_s^1|=\left|Z_s^1\right|>\left|Z_r^1\right|$, a contradiction to Corollary \ref{cor-Z-a-s}.
\end{proof}

It follows from the proof of \cite[Proposition 4.6]{cps-2} that if $n_r\geq 2$ and $|W|= 2$, then $|X|>|Z_r^{n_r}|$, contradicting Corollary \ref{cor-Z-a-s}. We thus have the following:

\begin{proposition}\label{nr-1}
If $|W|=2$, then $n_r = 1$.
\end{proposition}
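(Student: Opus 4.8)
The statement asserts that when $|W| = 2$ we must have $n_r = 1$. The natural strategy is to argue by contradiction: assume $|W| = 2$ and $n_r \geq 2$, and produce a cut-set strictly smaller than the minimum cut-set $X$, contradicting Corollary \ref{cor-Z-a-s}. The excerpt tells us this follows from \cite[Proposition 4.6]{cps-2}, but since we want a self-contained argument, the plan is to import that argument's skeleton. First I would record what $|W| = 2$ forces via the earlier structural results: by Proposition \ref{nr-1}'s companion facts, $|W| = 2$ together with Proposition \ref{U-V-W} (contrapositive) means we cannot have both $U$ and $V$ nonempty, so one of them is empty; since $|U| + |V| + |W| = r \geq 4$, the nonempty one among $U, V$ has size $r - 2 \geq 2$. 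Say $W = \{a, b\}$ with $a < b$, and all remaining indices lie in $U$ (relabel so the nonempty set is $U$).

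Next I would bound $|X|$ from below using the sets known to be inside $X$. Since $X \supseteq E_n \cup E_{\frac{n}{p_a}} \cup E_{\frac{n}{p_b}}$ and, because $A \cup B$ is a separation with $E_{\frac{n}{p_i}} \subseteq A$ for all $i \in U$, every subgroup $S_{\frac{n}{p_ip_j}}$ with $i, j \in U$ distinct is also contained in $X$ (using Proposition \ref{useful}); call the union of these subgroups $D$. These pieces are disjoint from $E_n, E_{\frac{n}{p_a}}, E_{\frac{n}{p_b}}$ (the elements of $D$ have order dividing some $\frac{n}{p_ip_j}$, hence not equal to $n$ or $\frac{n}{p_c}$ for $c \in W$). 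So $|X| \geq |E_n| + |E_{\frac{n}{p_a}}| + |E_{\frac{n}{p_b}}| + |D|$. The target is to compare this with $|Z_r^{n_r}|$, which by Proposition \ref{size-Z-a-s} equals $\phi(n) + \frac{n}{p_1\cdots p_r} \cdot \frac{1}{p_r^{n_r-1}} \cdot \big[\frac{p_1\cdots p_r}{p_r} + \phi(\frac{p_1\cdots p_r}{p_r})(p_r^{n_r-1} - 2)\big]$. The point of the hypothesis $n_r \geq 2$ is that it makes $|Z_r^{n_r}|$ genuinely small: the factor $\frac{1}{p_r^{n_r-1}} \leq \frac 1{p_r}$ shrinks the $Q$-part, and by Proposition \ref{prop-comp-Z-a-s}(ii) (which applies since $2\phi(\frac{p_1\cdots p_r}{p_r}) < \frac{p_1\cdots p_r}{p_r}$ when $r \geq 3$... wait, one must check the direction) $Z_r^{n_r}$ is the smallest among $Z_r^1, \ldots, Z_r^{n_r}$.

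The main estimate, and the main obstacle, is showing $|E_{\frac{n}{p_a}}| + |E_{\frac{n}{p_b}}| + |D| > \frac{n}{p_1\cdots p_r} \cdot \frac{1}{p_r^{n_r-1}} \cdot \big[\frac{p_1\cdots p_r}{p_r} + \phi(\frac{p_1\cdots p_r}{p_r})(p_r^{n_r-1} - 2)\big]$, i.e. that the three mandatory pieces in $X$ already outweigh the non-$\phi(n)$ part of $Z_r^{n_r}$. I would compute $|D|$ by inclusion–exclusion exactly as in Lemma \ref{none-1} with $I_1 = U \setminus \{s\}$ for a chosen $s \in U$ (or more directly: $D$ is the union over a set of primes, and Lemma \ref{none-1}-type bookkeeping gives $|D| = \frac{n}{p_1\cdots p_r} \cdot (\prod_{j \in W \cup \{\text{one }U\text{-index}\}} p_j) \cdot [\prod_{U'} p_i - \phi(\prod_{U'} p_i)]$ for an appropriate index set $U'$), and bound $|E_{\frac{n}{p_a}}|, |E_{\frac{n}{p_b}}|$ from below by $\phi(\frac{n}{p_{r-1}}), \phi(\frac{n}{p_r})$ via \eqref{eqn-1}. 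The inequality should then reduce, after clearing the common factor $\frac{n}{p_1\cdots p_r}$, to a purely number-theoretic statement about products of the primes $p_1 < \cdots < p_r$ and $p_r^{n_r - 1} \geq p_r \geq r + 1$; the $n_r \geq 2$ hypothesis enters precisely to guarantee the $p_r^{n_r-1} \geq p_r$ slack that closes the gap. The delicate point will be that $|W| = 2$ gives only $r - 2$ indices in $U$, so $D$ is built from $\binom{r-2}{2}$ subgroups rather than more; verifying the inequality in the worst case (smallest primes, $r = 4$, $n_r = 2$) is the step I would check by hand, and everything larger follows by monotonicity. Having derived $|X| > |Z_r^{n_r}|$, Corollary \ref{cor-Z-a-s} gives the contradiction, so $n_r = 1$.
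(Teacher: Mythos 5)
Your overall strategy (assume $n_r\geq 2$ and $|W|=2$, force enough of $X$ to conclude $|X|>\left|Z_r^{n_r}\right|$, contradict Corollary \ref{cor-Z-a-s}) is exactly the comparison the paper invokes, via the proof of \cite[Proposition 4.6]{cps-2}. However, your execution has a genuine gap at the central step. You claim that, since $E_{\frac{n}{p_i}}\subseteq A$ for all $i\in U$, ``every subgroup $S_{\frac{n}{p_ip_j}}$ with $i,j\in U$ distinct is also contained in $X$ (using Proposition \ref{useful}).'' Proposition \ref{useful} only forces $S_{\gcd(\alpha,\beta)}$ into the cut-set when an element of order $\alpha$ lies in $A$ and an element of order $\beta$ lies in $B$, i.e.\ when the two orders come from \emph{opposite} sides of the separation. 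By Proposition \ref{U-V-W}, when $|W|=2$ one of $U,V$ is empty, so all the sets $E_{\frac{n}{p_i}}$ with $i\notin W$ sit on the \emph{same} side; an element of $S_{\frac{n}{p_ip_j}}$ with $i,j\in U$ is adjacent to elements of $E_{\frac{n}{p_i}}\subseteq A$, hence cannot lie in $B$, but it may perfectly well lie in $A$ rather than in $X$. So your set $D$, which carries essentially all the weight in your lower bound for $|X|$, is not forced into $X$, and the estimate collapses.

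What is actually forced is different: with $W=\{a,b\}$ and (say) $V=[r]\setminus\{a,b\}$, the elements on the other side must have order of the form $\frac{n}{p_a^up_b^v}$, and Proposition \ref{useful} then puts the subgroups $S_{\frac{n}{p_ip_a^up_b^v}}$, $i\in[r]\setminus\{a,b\}$, into $X$, together with $E_n$, $E_{\frac{n}{p_a}}$, $E_{\frac{n}{p_b}}$ (this is the pattern used in Section \ref{sec-main-proof} and in \cite[Proposition 4.6]{cps-2}). The comparison with $\left|Z_r^{n_r}\right|$ must be made from these sets, using Lemmas \ref{lem-2.1}, \ref{lem-2.4} and \ref{lem-Q-a-b-s}-type computations, and the hypothesis $n_r\geq 2$ enters there. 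A second, smaller issue: even granting your (incorrect) $D$, you never establish the decisive inequality; you defer it to a ``worst case to check by hand,'' so the quantitative core of the argument is missing. (For the record, the paper itself does not reprove this inequality either; it cites the proof of \cite[Proposition 4.6]{cps-2}. A self-contained proof would need the corrected forced sets above and the ensuing estimate carried out in full.)
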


\section{Proof of the Main Theorem}\label{sec-main-proof}

Let $r\geq 4$ and $X$ be a minimum cut-set of $\mathcal{P}(C_n)$. Fix a separation $A\cup B$ of $\mathcal{P}(\overline{X})$. We continue with the sets $U,V$ and $W$ as defined in Section \ref{distribution} which are associated with $A,B$ and $X$ respectively. By Proposition \ref{prop-two-atmost}, we have $|W|\leq 2$.

\subsection{The case $|W|=0$}

Suppose that $W=\emptyset$. Then $U\neq \emptyset$ and $V\neq \emptyset$, as both $A$ and $B$ are nonempty subsets of $\overline{X}$ and $E_n$ is contained in $X$. So $\{U, V\}=\{\{r\},[r-1]\}$ by Proposition \ref{U-V-W}. It then follows that $X$ contains the subgroups $S_{\frac{n}{p_ip_r}}$ for $i\in[r-1]$ and hence contains the cut-set $Z_r^1$ of $\mathcal{P}(C_n)$. Since $X$ is a minimum cut-set of $\mathcal{P}(C_n)$, we must have $X=Z_r^1$. Thus we have the following:

\begin{proposition}\label{exactly-one}
If $W=\emptyset$, then $X=Z_r^1$.
\end{proposition}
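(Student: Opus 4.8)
The plan is to translate the hypothesis $W=\emptyset$ into structural information about how the sets $E_{n/p_i}$ are distributed between the two sides $A$ and $B$ of the separation, and then to recognize that this distribution forces $X$ to contain (hence equal) the cut-set $Z_r^1$. The first step is to observe that if $W=\emptyset$ then every $E_{n/p_i}$ lies entirely in $A$ or entirely in $B$, so $U\cup V=[r]$ with $U,V$ disjoint. Neither $U$ nor $V$ can be empty: since $E_n\subseteq X$, an element of $A$ has order a proper divisor of $n$ missing some prime power $p_i^{n_i}$, and similarly for $B$; more directly, if (say) $V=\emptyset$ then every $E_{n/p_i}\subseteq A$, and one checks that $B$ would then be forced to be empty (any vertex of $\overline{X}$ of order strictly dividing $n$ is adjacent to some $E_{n/p_i}\subseteq A$, via Proposition~\ref{useful}), contradicting that $A\cup B$ is a genuine separation. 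Hence both $U$ and $V$ are nonempty.

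With both $U$ and $V$ nonempty and $W=\emptyset$, Proposition~\ref{U-V-W} applies directly and tells us that one of $U,V$ is the singleton $\{r\}$; since $U\cup V=[r]$, the other is $[r-1]$. So $\{U,V\}=\{\{r\},[r-1]\}$. The next step is the key containment: for each $i\in[r-1]$, the set $E_{n/p_i}$ and the set $E_{n/p_r}$ lie on opposite sides of the separation, so by Proposition~\ref{useful} the subgroup $S_d$ with $d=\gcd(n/p_i,n/p_r)=n/(p_ip_r)$ must be contained in $X$. Taking the union over $i\in[r-1]$ shows $Q_r^1=\bigcup_{i\in[r-1]}S_{n/(p_ip_r)}\subseteq X$, and since $E_n\subseteq X$ we get $Z_r^1=E_n\cup Q_r^1\subseteq X$. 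By Proposition~\ref{prop-Z-a-s}, $Z_r^1$ is itself a cut-set, so minimality of $X$ forces $X=Z_r^1$.

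I do not expect any serious obstacle here: the statement is essentially a bookkeeping consequence of the two general structural results (Propositions~\ref{useful} and~\ref{U-V-W}) already established, together with the definition of $Z_r^1$. The only point requiring a little care is the argument that neither $U$ nor $V$ is empty — one must rule out the degenerate possibility that all of $\overline{X}$ collapses onto one side — but this follows cleanly from the observation that the identity-adjacency structure of $\mathcal{P}(C_n)$ makes every element of $\overline{X}$ whose order properly divides $n$ adjacent to a generator of some $S_{n/p_i}$, so it cannot be isolated from the side containing that $E_{n/p_i}$.

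\begin{proof}
Suppose that $W=\emptyset$. Then for every $i\in [r]$, the set $E_{\frac{n}{p_i}}$ is contained in $A$ or in $B$, so $U\cup V=[r]$ with $U$ and $V$ disjoint. We first check that $U\neq\emptyset$ and $V\neq\emptyset$. Suppose, to the contrary, that $V=\emptyset$; then $E_{\frac{n}{p_i}}\subseteq A$ for every $i\in [r]$. Let $y\in B$. Since $E_n\subseteq X$, the order of $y$ is a proper divisor of $n$, hence divides $\frac{n}{p_i}$ for some $i\in [r]$; but then $y$ is adjacent in $\mathcal{P}(C_n)$ to every element of $E_{\frac{n}{p_i}}\subseteq A$, contradicting that $A\cup B$ is a separation of $\mathcal{P}(\overline{X})$. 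Thus $V\neq\emptyset$, and by symmetry $U\neq\emptyset$.

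Since both $U$ and $V$ are nonempty and $W=\emptyset$, Proposition \ref{U-V-W} gives that one of $U,V$ is the singleton $\{r\}$; as $U\cup V=[r]$, the other equals $[r-1]$. Hence $\{U,V\}=\{\{r\},[r-1]\}$. Now fix $i\in [r-1]$. Then $E_{\frac{n}{p_i}}$ and $E_{\frac{n}{p_r}}$ lie on opposite sides of the separation $A\cup B$, so by Proposition \ref{useful} the subgroup $S_d$ of $C_n$ with $d=\gcd\!\left(\frac{n}{p_i},\frac{n}{p_r}\right)=\frac{n}{p_ip_r}$ is contained in $X$. Taking the union over $i\in [r-1]$, we conclude that $Q_r^1=\underset{i\in [r-1]}{\bigcup}S_{\frac{n}{p_ip_r}}$ is contained in $X$. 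Since $E_n\subseteq X$ as well, it follows that $Z_r^1=E_n\cup Q_r^1\subseteq X$. By Proposition \ref{prop-Z-a-s}, $Z_r^1$ is a cut-set of $\mathcal{P}(C_n)$, and since $X$ is a minimum cut-set, we must have $X=Z_r^1$.
\end{proof}
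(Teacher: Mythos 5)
Your proof is correct and follows essentially the same route as the paper: nonemptiness of $U$ and $V$, Proposition \ref{U-V-W} to force $\{U,V\}=\{\{r\},[r-1]\}$, the gcd argument (Proposition \ref{useful}) to get $Q_r^1\subseteq X$, and minimality to conclude $X=Z_r^1$. The only difference is that you spell out the nonemptiness step in more detail than the paper's one-line justification, which is a harmless elaboration.
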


\subsection{The case $|W|=1$}

\begin{proposition}\label{exactly-one}
If $W=\{a\}$ for some $a\in [r]$, then $n_a\geq 2$, $2\phi\left(\frac{p_1p_2\cdots p_r}{p_a}\right) < \frac{p_1p_2\cdots p_r}{p_a}$ and $X=Z_a^{n_a}$.
\end{proposition}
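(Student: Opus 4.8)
The plan is to show that when $|W| = 1$, say $W = \{a\}$, the minimum cut-set $X$ is forced to coincide with $Z_a^{n_a}$, and along the way that $n_a \geq 2$ and the inequality $2\phi\left(\frac{p_1p_2\cdots p_r}{p_a}\right) < \frac{p_1p_2\cdots p_r}{p_a}$ must hold. Since $|W| = 1$ and $E_n \subseteq X$ with $U, V$ partitioning $[r]\setminus\{a\}$, I first observe that $U$ and $V$ cannot both be nonempty: if they were, Proposition \ref{U-V-W} would force $W = \emptyset$. Hence one of $U, V$ is empty; without loss of generality $V = \emptyset$, so $U = [r]\setminus\{a\}$. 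This means every set $E_d$ with $d \mid n$ and $d$ not involving all of $p_a^{n_a}$... more precisely, I need to track which $E_d$ land in $A$ versus $X$: the separation $A \cup B$ forces $B \neq \emptyset$, and $B$ must consist of elements whose orders are incomparable (in the divisibility order) with the orders of elements of $A$.

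The key structural step is to identify $A$ and $B$ precisely. Since $E_{\frac{n}{p_i}} \subseteq A$ for all $i \neq a$ and $E_{\frac{n}{p_a}} \subseteq X$, the set $A$ contains elements of order $\frac{n}{p_i}$ for each $i \neq a$; any element in $B$ must have order not divisible by any $\frac{n}{p_i}$ and not dividing any such order. I expect this to force every element of $B$ to have order dividing $\frac{n}{p_a}$ but not dividing $\frac{n}{p_a^2}$ — or more generally, that the ``cut'' happens along powers of $p_a$. Following the pattern of Proposition \ref{prop-Z-a-s}, the natural separation is $A' = $ (elements whose order, written $p_1^{l_1}\cdots p_r^{l_r}$, has $l_i = n_i$ for some $i \neq a$) and $B' = $ (the rest, i.e.\ $l_i < n_i$ for all $i \neq a$), with the cut lying between them. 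Using Proposition \ref{useful} applied to an element of order $\frac{n}{p_i}$ in $A$ and any element $y \in B$, I get that $X$ contains $S_{\gcd(\frac{n}{p_i}, o(y))}$ for every $i \neq a$; running over all $i \neq a$ and taking unions, $X$ must contain $Q_a^s$ for the appropriate $s$, and since $E_{\frac{n}{p_a}}, \ldots, E_{\frac{n}{p_a^{s-1}}}$ are also squeezed into $X$ (as they are adjacent to the relevant vertices on both sides), $X \supseteq Z_a^s$ for some $s \in [n_a]$. Minimality then gives $X = Z_a^s$.

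It remains to pin down $s = n_a$ and derive the two asserted inequalities. Since $X = Z_a^s$ is a \emph{minimum} cut-set, Proposition \ref{prop-comp-Z-a-s} applies: if $2\phi\left(\frac{p_1\cdots p_r}{p_a}\right) > \frac{p_1\cdots p_r}{p_a}$ then $|Z_a^1| < \cdots < |Z_a^{n_a}|$, so the minimum among the $Z_a^j$ is $Z_a^1$, which has $|W| = 0$ for its natural separation, not $|W| = 1$ — this case cannot produce our $X$ with $W = \{a\}$ unless $n_a = 1$, but if $n_a = 1$ then $Z_a^1 = Z_a^{n_a}$ and moreover $W = \{a\}$ with $n_a = 1$ would need separate handling (I expect it to be ruled out by comparison with $Z_r^1$ via Lemma \ref{lem-Q-a-s-1}, forcing a contradiction since having $E_{\frac{n}{p_a}} \subseteq X$ with $n_a = 1$ makes $X$ strictly larger than $Z_r^1$). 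Thus $n_a \geq 2$, and we are in case (ii) of Proposition \ref{prop-comp-Z-a-s}, i.e.\ $2\phi\left(\frac{p_1\cdots p_r}{p_a}\right) < \frac{p_1\cdots p_r}{p_a}$ (equality is impossible since $r \geq 3$), which gives $|Z_a^1| > \cdots > |Z_a^{n_a}|$; since $X = Z_a^s$ must be minimum, $s = n_a$.

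The main obstacle I anticipate is the middle step: showing rigorously that the configuration $W = \{a\}$, $U = [r]\setminus\{a\}$, $V = \emptyset$ forces the separation to be exactly the $p_a$-power cut (so that $X \supseteq Z_a^s$), rather than merely bounding $|X|$ from below. One must argue that $B$ cannot contain an element whose order involves a prime $p_i$ ($i \neq a$) to less than its full power \emph{and} is incomparable to everything in $A$ — but since $A$ already contains $E_{\frac{n}{p_j}}$ for all $j \neq a$, any such order would be comparable to $\frac{n}{p_i}$ or would drag another subgroup into $X$, and a careful bookkeeping (in the spirit of the proof of Proposition \ref{U-V-W}, steps III–IV) shows the only way to keep $|X|$ at the minimum is the clean $p_a$-power separation. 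This is where I'd need to be most careful, and I'd lean on Corollary \ref{cor-Z-a-s} repeatedly to close off the non-minimal alternatives.
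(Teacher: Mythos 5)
Your overall route is the paper's: use Proposition \ref{U-V-W} to make one of $U,V$ empty, show that the side of the separation opposite the sets $E_{\frac{n}{p_i}}$, $i\neq a$, consists only of elements of order $\frac{n}{p_a^k}$, take $s$ minimal among the exponents occurring there, force $Z_a^s\subseteq X$, conclude $X=Z_a^s$ by minimality, and then invoke Proposition \ref{prop-comp-Z-a-s} together with Corollary \ref{cor-Z-a-s} to get the inequality and $s=n_a$. However, two steps are not right as written. First, your stated reason for why $E_{\frac{n}{p_a}},\ldots,E_{\frac{n}{p_a^{s-1}}}$ must lie in $X$ (``adjacent to the relevant vertices on both sides'') fails: an element of order $\frac{n}{p_a^j}$ is \emph{not} adjacent to any element of order $\frac{n}{p_i}$ with $i\neq a$, since neither order divides the other. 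The correct argument is the one the paper gives: such an element cannot lie on the far side by the minimality of $s$, and cannot lie on the near side because it is adjacent to the elements of order $\frac{n}{p_a^s}$ sitting on the far side; hence it lies in $X$.

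Second, and more seriously, your derivation of $n_a\geq 2$ (and of $s\geq 2$, which you need before you can rule out the case $2\phi\left(\frac{p_1\cdots p_r}{p_a}\right)>\frac{p_1\cdots p_r}{p_a}$, since Proposition \ref{prop-comp-Z-a-s} assumes $n_a\geq 2$) is deferred to a size comparison with $Z_r^1$ that does not work: if $n_a=1$, knowing only that $E_n\cup E_{\frac{n}{p_a}}\subseteq X$ does not give $|X|>\left|Z_r^1\right|$; for instance, when $a=r$ and $n$ is squarefree one has $\phi\left(\frac{n}{p_r}\right)<\left|Q_r^1\right|$ whenever $2\phi(p_1\cdots p_{r-1})<p_1\cdots p_{r-1}$, which already happens for $p_1p_2p_3=30$. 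The clean argument, which your own middle step essentially contains, is order-theoretic rather than size-theoretic: every element of the far side has order of the form $\frac{n}{p_a^k}$, and since $E_n$ and $E_{\frac{n}{p_a}}$ lie in $X$ (because $W=\{a\}$) while the far side is nonempty, some $k\geq 2$ must occur; this gives $n_a\geq 2$ and forces the minimal $s$ to be at least $2$, after which your concluding comparison of the $\left|Z_a^j\right|$ goes through exactly as in the paper.
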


\begin{proof}
Without loss of generality, we may assume by Proposition \ref{U-V-W} that $U=\emptyset$. Then $V=[r]\setminus \{a\}$. Since $B$ contains the sets $E_{\frac{n}{p_i}}$ for $i\in [r]\setminus \{a\}$ and there is no edge of $\mathcal{P}(C_n)$ containing vertices from both $A$ and $B$, the order of an element in $A$ must be divisible by $p_i^{n_i}$ for every $i\in [r]\setminus \{a\}$ and hence must be of the form $\frac{n}{p_a^k}$, where $0\leq k\leq n_a$. The elements of $C_n$ of order $n$ and $\frac{n}{p_a}$ correspond to the elements contained in $E_n$ and $E_{\frac{n}{p_a}}$ respectively. Since $E_n$ and $E_{\frac{n}{p_a}}$ are contained in $X$ and $A$ is nonempty, it follows that $k\geq 2$ and hence $n_a\geq 2$.

Let $s\in\{2,3,\ldots,n_a\}$ be the smallest integer such that $A$ has an element of order $\frac{n}{p_a^s}$. Then the sets $E_{\frac{n}{p_a^2}},\dots,E_{\frac{n}{p_a^{s-1}}}$ and the subgroups $S_{\frac{n}{p_ip_a^s}}$, $i\in [r]\setminus\{a\}$, must be contained in $X$. It follows that $X$ contains the cut-set $Z_a^s$ of $\mathcal{P}(C_n)$. Since $X$ is a minimum cut-set of $\mathcal{P}(C_n)$, we must have $X=Z_a^s$.

Since $r\geq 4$, we have $2\phi\left(\frac{p_1p_2\cdots p_r}{p_a}\right)\neq \frac{p_1p_2\cdots p_r}{p_a}$. If $2\phi\left(\frac{p_1p_2\cdots p_r}{p_a}\right) > \frac{p_1p_2\cdots p_r}{p_a}$, then $s\geq 2$ implies $\left|Z_a^s \right|> \left|Z_a^1 \right|$ by Proposition \ref{prop-comp-Z-a-s}. This gives $|X|>\left|Z_a^1 \right|$, contradicting Corollary \ref{cor-Z-a-s}.

Thus $2\phi\left(\frac{p_1p_2\cdots p_r}{p_a}\right)< \frac{p_1p_2\cdots p_r}{p_a} $. In this case, by Proposition \ref{prop-comp-Z-a-s}, $\left|Z_a^s \right|\geq \left|Z_a^{n_a} \right|$ and equality holds if and only if $s=n_a$. Since $X=Z_a^s$ is a minimum cut-set of $\mathcal{P}(C_n)$, it follows that $s=n_a$ and $X=Z_a^{n_a}$.
\end{proof}

\subsection{The case $|W|=2$}

In the rest of this section, we consider that $W=\{a,b\}$ for some $a,b\in [r]$ with $a\neq b$.
Without loss of generality, we may assume by Proposition \ref{U-V-W} that $U=\emptyset$. Then $V=[r]\setminus \{a,b\}$ and so $B$ contains the sets $E_{\frac{n}{p_i}}$ for $i\in [r]\setminus \{a,b\}$.

\begin{proposition}\label{two-2-1}
If $x\in A$, then $o(x)$ cannot be of the form $\frac{n}{p_c^k}$, where $c\in\{a,b\}$ and $k\in [n_c]$.
\end{proposition}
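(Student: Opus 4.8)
\textbf{Proof proposal for Proposition \ref{two-2-1}.}

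The plan is to argue by contradiction using the fact that $W=\{a,b\}$ and $U=\emptyset$, so that $V=[r]\setminus\{a,b\}$ and $B$ contains $E_{\frac{n}{p_i}}$ for every $i\in[r]\setminus\{a,b\}$. Suppose some $x\in A$ has $o(x)=\frac{n}{p_c^k}$ with $c\in\{a,b\}$ and $k\in[n_c]$; say $c=a$ (the case $c=b$ is symmetric since $a,b$ play interchangeable roles given $W=\{a,b\}$). First I would use Proposition \ref{useful}: for each $i\in[r]\setminus\{a,b\}$, the set $B$ contains an element of order $\frac{n}{p_i}$ and $A$ contains $x$ of order $\frac{n}{p_a^k}$, so $X$ contains the subgroup $S_{d_i}$ where $d_i=\gcd\!\left(\frac{n}{p_a^k},\frac{n}{p_i}\right)=\frac{n}{p_a^k p_i}$ (since $p_a\neq p_i$, the $p_a$-part of the gcd is $p_a^{n_a-k}$ and the $p_i$-part is $p_i^{n_i-1}$). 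Hence $X$ contains the union of the subgroups $S_{\frac{n}{p_i p_a^k}}$ over $i\in[r]\setminus\{a,b\}$, which is exactly $Q_{a,b}^{k}$ in the notation of Lemma \ref{lem-Q-a-b-s}.

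Next I would assemble a lower bound on $|X|$. Besides $Q_{a,b}^k$, the cut-set $X$ contains $E_n$, $E_{\frac{n}{p_a}}$, $E_{\frac{n}{p_b}}$ (because $a,b\in W$), and in fact all of $E_{\frac{n}{p_a^j}}$ for $1\le j\le k-1$ and $E_{\frac{n}{p_b}}$ wedged appropriately — more carefully, since $s$ was chosen minimal one must track which $E_{\frac{n}{p_a^j}}$ are forced into $X$. The cleanest route is: $X$ contains $E_n$, the sets $E_{\frac{n}{p_a^j}}$ for $j\in[k-1]$ (these are forced by minimality of $k$, as the elements of those orders cannot lie in $A$ or in $B$), $E_{\frac{n}{p_b}}$, and $Q_{a,b}^{k}$, and these are pairwise disjoint. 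Using Lemma \ref{lem-2.1} and Lemma \ref{lem-Q-a-b-s} one computes a closed form for this sum. The goal is to show it strictly exceeds $\left|Z_r^1\right|$ (or one of the other reference cut-sets from Corollaries \ref{cor-Z-a-s}, \ref{cor-X-a-b-s-t}), giving the contradiction. Since by Proposition \ref{nr-1} we have $n_r=1$ here, $\left|Z_r^1\right|=\phi(n)+\frac{n}{p_1\cdots p_r}\left[\frac{p_1\cdots p_r}{p_r}-\phi\left(\frac{p_1\cdots p_r}{p_r}\right)\right]$, and one compares term by term: the $E_{\frac{n}{p_b}}=\phi(n/p_b)$ contribution already dominates $\left|Q_r^1\right|$ via an argument parallel to Lemma \ref{lem-to-use} (with $p_a>2$ or $p_b>2$ available since $r\ge4$ forces at least one of $p_a,p_b$ to exceed $2$, actually $p_b\ge p_2\ge 3$ unless $\{a,b\}=\{1,2\}$, which needs separate handling).

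I expect the main obstacle to be the bookkeeping in the smallest-index argument and the term-by-term comparison: one must correctly identify exactly which sets $E_{\frac{n}{p_a^j}}$ and $E_{\frac{n}{p_b^j}}$ are forced into $X$ (this depends on whether $A$ also contains elements of order $\frac{n}{p_b^l}$, which is precisely what the proposition is trying to rule out, so care is needed to avoid circularity — the fix is to handle the two claims $c=a$ and $c=b$ simultaneously, assuming $A$ contains an element of order $\frac{n}{p_a^k}$ with $k$ minimal and showing this alone forces $|X|$ too large). A secondary difficulty is the edge case $\{a,b\}=\{1,2\}$ with $p_1=2,p_2=3$, where the inequalities from (\ref{eqn-2}) degenerate to equalities; there one must use $r\ge 4$ to pick up a strict inequality from a third prime, or invoke Lemma \ref{lem-to-use} with the pair relabelled so that the larger prime plays the role of $p_b$. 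Once the correct closed-form lower bound for $|X|$ is in hand, the comparison with $\left|Z_r^1\right|$ should reduce, after cancelling the common factor $\frac{n}{p_1p_2\cdots p_r}$ and the $\phi(n)$ term, to an inequality among products and totients of squarefree numbers that follows from (\ref{eqn-2}) and (\ref{eqn-3}).
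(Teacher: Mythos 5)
Your first half matches the paper's argument: assuming (WLOG) $c=a$, noting $k\geq 2$ because $E_{\frac{n}{p_a}}\subseteq X$, taking the minimal exponent $s$, and concluding that $X$ contains the mutually disjoint sets $E_n$, $E_{\frac{n}{p_a}},\ldots,E_{\frac{n}{p_a^{s-1}}}$, $E_{\frac{n}{p_b}}$ and $Q_{a,b}^{s}$ (the union of the $r-2$ subgroups $S_{\frac{n}{p_ip_a^s}}$, $i\in[r]\setminus\{a,b\}$) is exactly what the paper does; your worry about circularity is unfounded, since the minimality of $s$ plus adjacency with the order-$\frac{n}{p_a^s}$ element of $A$ already forces those $E_{\frac{n}{p_a^j}}$ into $X$, and no simultaneous treatment of $c=a$ and $c=b$ is needed.

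The genuine gap is in the contradiction step: you compare the resulting lower bound with $\left|Z_r^1\right|$ and assert that the $\phi\left(\frac{n}{p_b}\right)$ term ``already dominates'' $\left|Q_r^1\right|$ by an argument parallel to Lemma \ref{lem-to-use}. That parallel does not exist here: Lemma \ref{lem-to-use} needs the union $T$ of the subgroups $S_{\frac{n}{p_bp_j}}$ at exponent $1$, whereas you only have the much smaller set $Q_{a,b}^{s}$ with $s\geq 2$, and the comparison with $Z_r^1$ can simply fail. Concretely, take $n=2\cdot 3\cdot 5^3\cdot 7$ (so $n_r=1$, consistent with Proposition \ref{nr-1}), $W=\{3,4\}$, $p_a=5$, $p_b=7$, $s=3$: writing $m=\frac{n}{p_1p_2p_3p_4}=25$, your lower bound is $|E_n|+\phi\left(\frac{n}{5}\right)+\phi\left(\frac{n}{25}\right)+\phi\left(\frac{n}{7}\right)+\left|Q_{3,4}^{3}\right|=|E_n|+288+200+28=|E_n|+516$, while $\left|Z_r^1\right|=|E_n|+m\left[30-\phi(30)\right]=|E_n|+550$, so no contradiction with Corollary \ref{cor-Z-a-s} arises along your route. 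The missing idea is the choice of reference cut-set: the paper compares with $Z_a^{s}$ for the \emph{same} $a$ and the \emph{same} minimal $s$, so that the terms $E_n,E_{\frac{n}{p_a}},\ldots,E_{\frac{n}{p_a^{s-1}}}$ cancel and everything reduces to the single inequality $\left|E_{\frac{n}{p_b}}\right|+\left|Q_{a,b}^{s}\right|>\left|Q_a^{s}\right|$, i.e.\ $\phi\left(\frac{n}{p_b}\right)>\frac{n}{p_1\cdots p_r}\times\frac{1}{p_a^{s-1}}\times\phi\left(\frac{p_1\cdots p_r}{p_ap_b}\right)$, which follows from $\frac{\phi(p_ap_b)}{p_b}>\frac{1}{p_a^{s-1}}$ because $s\geq 2$ (in the example above: $516>|Z_3^3|-|E_n|=318$). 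In particular the edge case $\{p_a,p_b\}=\{2,3\}$ that you anticipate needs no separate handling in the correct comparison.
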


\begin{proof}
Suppose that $A$ contains an element of order of the form $\frac{n}{p_c^k}$ for some $c\in\{a,b\}$ and $k\in [n_c]$. Without loss of generality, we may assume that $c=a$. Then $k\geq 2$ as $E_{\frac{n}{p_a}}$ is contained in $X$.

Let $s\in\{2,3,\ldots,n_a\}$ be the smallest integer such that $A$ contains an element of order $\frac{n}{p_a^s}$. Then the sets $E_{\frac{n}{p_a}},\ldots, E_{\frac{n}{p_a^{s-1}}}$ and the subgroups $S_{\frac{n}{p_ip_a^s}}$, $i\in [r]\setminus\{a,b\}$, must be contained in $X$. Let $Q_{a,b}^s$ denote the union of the $r-2$ subgroups $S_{\frac{n}{p_ip_a^s}}$, $i\in [r]\setminus\{a,b\}$. Since $Q_{a,b}^s$ is a subset of $Q_a^s$, we get
$$\left|Q_a^s\setminus Q_{a,b}^s\right|=\left|Q_a^s\right|-\left|Q_{a,b}^s\right|=\frac{n}{p_1p_2\cdots p_r}\times\frac{1}{p_a^{s-1}}\times \phi\left(\frac{p_1p_2\cdots p_r}{p_ap_b}\right)$$
using Lemmas \ref{lem-Q-a-s} and \ref{lem-Q-a-b-s}. As $s\geq 2$, we have $\frac{\phi(p_ap_b)}{p_b}>\frac{1}{p_a^{s-1}}$ and so
$$\left|E_{\frac{n}{p_b}}\right|= \phi\left(\frac{n}{p_b}\right)\geq \frac{n}{p_1\cdots p_r}\times \frac{\phi(p_1\cdots p_{r})}{p_b} > \frac{n}{p_1\cdots p_r}\times \frac{1}{p_a^{s-1}}\times \phi\left(\frac{p_1\cdots p_r}{p_ap_b}\right)=\left|Q_a^s\right|-\left|Q_{a,b}^s\right|.$$
This gives $\left|E_{\frac{n}{p_b}}\right| + \left|Q_{a,b}^s\right| > \left|Q_a^s\right|.$ The sets $E_n$, $E_{\frac{n}{p_a}}, \ldots, E_{\frac{n}{p_a^{s-1}}}$, $E_{\frac{n}{p_b}}$ and $Q_{a,b}^s$ are mutually disjoint and contained in $X$. So
\begin{align*}
|X|&\geq |E_n|+\left|E_{\frac{n}{p_a}}\right|+\cdots +\left|E_{\frac{n}{p_a^{s-1}}}\right|+\left|E_{\frac{n}{p_b}}\right|+\left|Q_{a,b}^s\right|\\
 & > |E_n|+\left|E_{\frac{n}{p_a}}\right|+\cdots +\left|E_{\frac{n}{p_a^{s-1}}}\right|+\left|Q_{a}^s\right|.
\end{align*}
Since the cut-set $Z_a^s$ of $\mathcal{P}(C_n)$ is the union of mutually disjoint sets $E_n$, $E_{\frac{n}{p_a}}$, $E_{\frac{n}{p_a^2}},\ldots,E_{\frac{n}{p_a^{s-1}}}$ and $Q_a^s$, it follows that
$|X| > \left|Z_a^s\right|$, a contradiction to Corollary \ref{cor-Z-a-s}.
\end{proof}

\begin{corollary}\label{elements-in-A}
If $x\in A$, then $o(x)$ is of the form $\frac{n}{p_a^{u}p_b^{v}}$ for some $u,v$ with $1\leq u\leq n_a$ and $1\leq v\leq n_b$.
\end{corollary}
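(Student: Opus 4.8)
\textbf{Proof proposal for Corollary \ref{elements-in-A}.}

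The plan is to combine Proposition \ref{two-2-1} with the structural constraints already imposed on elements of $A$ by the case hypothesis $W=\{a,b\}$, $U=\emptyset$. First I would recall that since $V=[r]\setminus\{a,b\}$, the set $B$ contains $E_{\frac{n}{p_i}}$ for every $i\in[r]\setminus\{a,b\}$, and since $A\cup B$ is a separation of $\mathcal{P}(\overline{X})$ there is no edge of $\mathcal{P}(C_n)$ joining a vertex of $A$ to a vertex of $B$. Take $x\in A$ and write $o(x)=p_1^{l_1}p_2^{l_2}\cdots p_r^{l_r}$ with $0\le l_i\le n_i$. For each $i\in[r]\setminus\{a,b\}$, pick $y_i\in E_{\frac{n}{p_i}}\subseteq B$; non-adjacency of $x$ and $y_i$ forces, in particular, $o(x)\nmid o(y_i)$, i.e.\ $o(x)$ does not divide $\frac{n}{p_i}$. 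Since $\frac{n}{p_i}$ is exactly $n$ with the exponent of $p_i$ lowered by one, this says $l_i=n_i$. Hence $l_i=n_i$ for every $i\in[r]\setminus\{a,b\}$, so $o(x)$ is of the form $\frac{n}{p_a^{u}p_b^{v}}$ with $0\le u\le n_a$ and $0\le v\le n_b$.

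It remains to rule out $u=0$ and $v=0$. If $u=0$ and $v=0$ then $o(x)=n$, so $x\in E_n\subseteq X$, contradicting $x\in\overline{X}$. If $u\ge 1$ but $v=0$, then $o(x)=\frac{n}{p_a^{u}}$ with $1\le u\le n_a$, which is precisely a forbidden form (take $c=a$, $k=u$ in Proposition \ref{two-2-1}); similarly $u=0$, $v\ge1$ gives $o(x)=\frac{n}{p_b^{v}}$ with $1\le v\le n_b$, forbidden by Proposition \ref{two-2-1} with $c=b$, $k=v$. Therefore $u\ge 1$ and $v\ge 1$, which is exactly the assertion.

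There is essentially no obstacle here: the corollary is a direct bookkeeping consequence of Proposition \ref{two-2-1} together with the already-established fact that elements of $A$ must have order divisible by $p_i^{n_i}$ for $i\notin\{a,b\}$ (the latter being the same non-adjacency argument that was used verbatim in the opening paragraph of this subsection and in Proposition \ref{exactly-one}). The only mild point to be careful about is the degenerate cases $u=0$ or $v=0$, where one must separately invoke $E_n\subseteq X$ and $E_{\frac{n}{p_a}},E_{\frac{n}{p_b}}\subseteq X$ (the latter because $a,b\in W$), so that none of these orders can actually be realized by a vertex of $\overline{X}$, let alone of $A$. I would state the argument in two or three sentences and cite Proposition \ref{two-2-1} for the main exclusion.
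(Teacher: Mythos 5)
Your argument is correct and is essentially the paper's own proof: the same non-adjacency argument with the sets $E_{\frac{n}{p_i}}\subseteq B$, $i\in[r]\setminus\{a,b\}$, forces $p_i^{n_i}\mid o(x)$, and Proposition \ref{two-2-1} (together with $E_n\subseteq X$) rules out $u=0$ or $v=0$. Your closing remark that one must also invoke $E_{\frac{n}{p_a}},E_{\frac{n}{p_b}}\subseteq X$ is redundant, since Proposition \ref{two-2-1} already covers $k=1$, but this does not affect correctness.
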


\begin{proof}
By Proposition \ref{two-2-1}, $A$ has no element whose order is of the form $\frac{n}{p_c^k}$, where $c\in\{a,b\}$ and $k\in [n_c]$. Since $B$ contains the sets $E_{\frac{n}{p_i}}$ for $i\in [r]\setminus \{a,b\}$ and there is no edge of $\mathcal{P}(C_n)$ containing vertices from both $A$ and $B$, $o(x)$ must be divisible by $p_i^{n_i}$ for every $i\in [r]\setminus \{a,b\}$. It then follows that $o(x)$ is of the form $\frac{n}{p_a^{u}p_b^{v}}$ with $1\leq u\leq n_a$ and $1\leq v\leq n_b$.
\end{proof}

{\it Now on we assume that $a<b$}. Then $p_a<p_b$ and so $p_b\geq 3$. By Corollary \ref{elements-in-A}, let $t\in [n_b]$ be the smallest integer such that $A$ contains $E_{\frac{n}{p_a^up_b^t}}$ for some $u\in [n_a]$. For such $t$, let $s\in[n_a]$ be the smallest integer for which $E_{\frac{n}{p_a^sp_b^t}}$ is contained in $A$. The rest of this section is devoted to prove that $X=X_{a,b}^{s,t}$. 

Since the sets $E_{\frac{n}{p_i}}$, $i\in [r]\setminus\{a,b\}$, are contained in $B$ and there is no edge of $\mathcal{P}(C_n)$ containing vertices from both $A$ and $B$, it follows that the sets $E_{\frac{n}{p_a^ip_b^j}}$, where $0\leq i\leq s$, $0\leq j\leq t$ with $(i,j)\neq (s,t)$, and the subgroups $S_{\frac{n}{p_ip_a^sp_b^t}}$ of $\mathcal{P}(C_n)$, where $i\in [r]\setminus\{a,b\}$, must be contained in $X$. Thus we have the following:

\begin{proposition}\label{elements-in-X}
$X$ contains $H_{a,b}^{s,t}$ and the subgroups $S_{\frac{n}{p_ip_a^sp_b^t}}$ of $C_n$, where $i\in [r]\setminus\{a,b\}$.
\end{proposition}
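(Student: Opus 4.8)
The plan is to prove the two assertions separately, in both cases using that $A\cup B$ is a separation of $\mathcal{P}(\overline{X})$ together with Proposition \ref{dist} and the minimality built into the choice of $s$ and $t$.

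First I would handle the subgroups $S_{\frac{n}{p_ip_a^sp_b^t}}$. By the choice of $s$ and $t$, the set $E_{\frac{n}{p_a^sp_b^t}}$ is contained in $A$, so $A$ contains an element of order $\frac{n}{p_a^sp_b^t}$; and since $U=\emptyset$ and $V=[r]\setminus\{a,b\}$, the set $E_{\frac{n}{p_i}}$ is contained in $B$ for every $i\in[r]\setminus\{a,b\}$, so $B$ contains an element of order $\frac{n}{p_i}$. As $p_i$ is coprime to $p_ap_b$, the greatest common divisor of $\frac{n}{p_a^sp_b^t}$ and $\frac{n}{p_i}$ equals $\frac{n}{p_ip_a^sp_b^t}$, and Proposition \ref{useful} then forces $S_{\frac{n}{p_ip_a^sp_b^t}}\subseteq X$ for each such $i$.

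Next I would show $H_{a,b}^{s,t}\subseteq X$, i.e. $E_{\frac{n}{p_a^ip_b^j}}\subseteq X$ for every pair $(i,j)$ with $0\le i\le s$, $0\le j\le t$ and $(i,j)\neq(s,t)$. Fix such a pair. By Proposition \ref{dist}, $E_{\frac{n}{p_a^ip_b^j}}$ lies entirely inside one of $X$, $A$, $B$, so it is enough to exclude $A$ and $B$. To exclude $A$: if $i=0$ or $j=0$, then $\frac{n}{p_a^ip_b^j}$ is not of the form $\frac{n}{p_a^up_b^v}$ with $u,v\ge1$, so $E_{\frac{n}{p_a^ip_b^j}}$ cannot meet $A$ by Corollary \ref{elements-in-A}; if $i,j\ge1$, then $E_{\frac{n}{p_a^ip_b^j}}\subseteq A$ would contradict the minimality of $t$ when $j<t$, and the minimality of $s$ when $j=t$ and $i<s$. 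To exclude $B$: since $p_a^ip_b^j\mid p_a^sp_b^t$ we have $\frac{n}{p_a^sp_b^t}\mid\frac{n}{p_a^ip_b^j}$, so any element of order $\frac{n}{p_a^ip_b^j}$ is adjacent in $\mathcal{P}(C_n)$ to every element of the nonempty set $E_{\frac{n}{p_a^sp_b^t}}$, which is contained in $A$; as there is no edge of $\mathcal{P}(C_n)$ between $A$ and $B$, the set $E_{\frac{n}{p_a^ip_b^j}}$ cannot be contained in $B$. Hence $E_{\frac{n}{p_a^ip_b^j}}\subseteq X$, and taking the union over all admissible $(i,j)$ gives $H_{a,b}^{s,t}\subseteq X$.

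I do not expect a genuine obstacle here; the argument is careful bookkeeping over the exponents. The one point deserving attention is the two-stage minimality of the pair $(s,t)$ — the $p_b$-exponent $t$ is minimized first, and only afterwards the $p_a$-exponent $s$ — which is precisely why the exclusion of $A$ in the case $i,j\ge1$ splits into ``$j<t$'' and ``$j=t$ and $i<s$''. It is also worth noting in passing that $E_{\frac{n}{p_a^sp_b^t}}\neq\emptyset$, so it genuinely witnesses an element of $A$, and that the orders $\frac{n}{p_a^ip_b^j}$ and $\frac{n}{p_a^sp_b^t}$ differ when $(i,j)\neq(s,t)$, so the adjacencies invoked are between distinct vertices.
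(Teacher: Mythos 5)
Your proposal is correct and follows essentially the same route as the paper, which justifies Proposition \ref{elements-in-X} in one brief sentence using exactly these ingredients: Proposition \ref{useful} applied to an element of order $\frac{n}{p_a^sp_b^t}$ in $A$ and one of order $\frac{n}{p_i}$ in $B$ for the subgroups, and Proposition \ref{dist}, Corollary \ref{elements-in-A}, the two-stage minimality of $(s,t)$, and the absence of $A$--$B$ edges for the sets $E_{\frac{n}{p_a^ip_b^j}}$. You have merely written out the bookkeeping the paper leaves implicit, so no further changes are needed.
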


\begin{corollary}\label{s=n-a t=n-b}
If $(s,t)=(n_a, n_b)$, then $X=X_{a,b}^{s,t}=X_{a,b}^{n_a,n_b}$. 
\end{corollary}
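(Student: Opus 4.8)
The plan is to observe that for $(s,t)=(n_a,n_b)$ the ``nongenerator part'' $K_{a,b}^{s,t}$ of the cut-set $X_{a,b}^{s,t}$ is exactly the union of the subgroups already forced to lie in $X$ by Proposition~\ref{elements-in-X}, so that $X$ contains the cut-set $X_{a,b}^{n_a,n_b}$, and then to invoke minimality. The point is that $\frac{n}{p_a^{n_a}p_b^{n_b}}=\underset{i\in[r]\setminus\{a,b\}}\prod p_i^{n_i}$, so a divisor of $\frac{n}{p_a^{n_a}p_b^{n_b}}$ is a \emph{proper} divisor precisely when it fails to be divisible by $p_i^{n_i}$ for at least one $i\in[r]\setminus\{a,b\}$, i.e.\ precisely when it divides $\frac{n}{p_ip_a^{n_a}p_b^{n_b}}$ for some such $i$. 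Hence every nongenerator of $S_{\frac{n}{p_a^{n_a}p_b^{n_b}}}$ lies in one of the subgroups $S_{\frac{n}{p_ip_a^{n_a}p_b^{n_b}}}$, and conversely each of these subgroups consists of nongenerators; therefore
$$
K_{a,b}^{n_a,n_b}=\underset{i\in[r]\setminus\{a,b\}}{\bigcup} S_{\frac{n}{p_ip_a^{n_a}p_b^{n_b}}}.
$$

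First I would record this identity. Next, since $(s,t)=(n_a,n_b)$, Proposition~\ref{elements-in-X} tells us that $X$ contains $H_{a,b}^{n_a,n_b}$ as well as every subgroup $S_{\frac{n}{p_ip_a^{n_a}p_b^{n_b}}}$ with $i\in[r]\setminus\{a,b\}$; combining with the displayed identity gives $K_{a,b}^{n_a,n_b}\subseteq X$, and therefore $X_{a,b}^{n_a,n_b}=H_{a,b}^{n_a,n_b}\cup K_{a,b}^{n_a,n_b}\subseteq X$. Finally, by Proposition~\ref{prop-X-a-b-s-t} the set $X_{a,b}^{n_a,n_b}$ is itself a cut-set of $\mathcal{P}(C_n)$; since $X$ is a minimum cut-set with $X_{a,b}^{n_a,n_b}\subseteq X$, we conclude $X=X_{a,b}^{n_a,n_b}$.

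I do not expect a genuine obstacle here: the argument is a short containment-plus-minimality argument. The only place that needs a line of care is the verification that the set of nongenerators of $S_{\frac{n}{p_a^{n_a}p_b^{n_b}}}$ is exactly the union of the ``maximal'' subgroups $S_{\frac{n}{p_ip_a^{n_a}p_b^{n_b}}}$, $i\in[r]\setminus\{a,b\}$ — which amounts to the elementary fact that a proper divisor of a product of distinct prime powers must omit one of those prime powers and hence divides one of the corresponding maximal divisors. Everything else is immediate from Propositions~\ref{prop-X-a-b-s-t} and~\ref{elements-in-X} together with the definition of $X_{a,b}^{s,t}$.
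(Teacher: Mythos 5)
Your proof is correct and follows essentially the same route as the paper: Proposition \ref{elements-in-X} forces $H_{a,b}^{n_a,n_b}$ and the subgroups $S_{\frac{n}{p_ip_a^{n_a}p_b^{n_b}}}$ into $X$, hence $X_{a,b}^{n_a,n_b}\subseteq X$, and minimality finishes. The only difference is that you make explicit the (correct) identification of $K_{a,b}^{n_a,n_b}$ with the union of those maximal subgroups, which the paper leaves implicit at this point.
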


\begin{proof}
Since $(s,t)=(n_a, n_b)$, Proposition \ref{elements-in-X} implies that $X$ contains the set $K_{a,b}^{n_a,n_b}$ and hence the cut-set $X_{a,b}^{n_a,n_b}$ of $\mathcal{P}(C_n)$. Then the corollary follows by the minimality of $X$.
\end{proof}

By Corollary \ref{s=n-a t=n-b}, we shall assume that $(s,t)\neq (n_a, n_b)$. Our aim is to show that $X$ contains the subgroup $S_{\frac{n}{p_a^{s+1}p_b^t}}$ if $s<n_a$ and the subgroup $S_{\frac{n}{p_a^sp_b^{t+1}}}$ if $t<n_b$. For this, it is enough to show that $B$ contains the set $E_{\frac{n}{p_a^{s+1}}}$ if $s<n_a$ and the set $E_{\frac{n}{p_b^{t+1}}}$ if $t<n_b$. 

\begin{proposition}\label{not-(2,1)}
If $t<n_b$, then $(p_a, n_a)\neq (2,1)$.
\end{proposition}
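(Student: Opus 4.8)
The plan is to argue by contradiction: assume $t < n_b$ and $(p_a, n_a) = (2,1)$, i.e. $p_a = 2$ (so $a = 1$) and $n_a = n_1 = 1$. I want to derive a cut-set of $\mathcal{P}(C_n)$ strictly smaller than $X$, contradicting the minimality of $X$ (via Corollary \ref{cor-Z-a-s} or Corollary \ref{cor-X-a-b-s-t}). Since $n_a = 1$, the constraint from Corollary \ref{elements-in-A} forces $u = 1$, so $s = n_a = 1$ automatically; thus $(s,t)$ with $s = 1$, and since we are in the case $(s,t) \neq (n_a,n_b)$ this is consistent with $t < n_b$. By Proposition \ref{elements-in-X}, $X$ contains $H_{a,b}^{s,t} = H_{1,b}^{1,t}$ and the $r-2$ subgroups $S_{\frac{n}{p_i p_1 p_b^t}}$ for $i \in [r]\setminus\{1,b\}$; call the union of these subgroups $Q$. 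The idea is to compare the forced contribution $|H_{1,b}^{1,t}| + |Q|$ sitting inside $X$ against the size of a known cut-set.

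First I would make the size bookkeeping explicit. Using Lemma \ref{lem-2.4} (with $a = 1$, summing over the appropriate ranges of exponents of $p_1$ — which is trivial since $n_1 = 1$ — and $p_b$), compute $|H_{1,b}^{1,t}|$, and using an analogue of Lemma \ref{none-1} (the union $Q$ is exactly of the shape handled there, with $a$ replaced by the pair $p_1 p_b^t$), compute $|Q|$. Then I would compare $X$ against the cut-set $Z_b^{t+1}$, or against $Z_b^1$, or against $X_{1,b}^{1,t+1}$ — whichever gives the cleanest strict inequality. The natural candidate is $X_{1,b}^{1,t+1}$: it consists of $H_{1,b}^{1,t+1} = H_{1,b}^{1,t} \cup E_{\frac{n}{p_b^{t+1}}}$ together with $K_{1,b}^{1,t+1}$, the nongenerators of $S_{\frac{n}{p_1 p_b^{t+1}}}$. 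So it suffices to show
$$\left|E_{\frac{n}{p_b^{t+1}}}\right| + \left|K_{1,b}^{1,t+1}\right| < \left|Q\right|$$
would be the \emph{wrong} direction; rather I want $|Q| + (\text{other forced sets}) $ to exceed $|X_{1,b}^{1,t+1}|$. The cleanest route: $X$ contains $E_n \cup H_{1,b}^{1,t} \cup Q$ as a disjoint union, while $X_{1,b}^{1,t+1}$ is the disjoint union of $E_n$, $H_{1,b}^{1,t}$, $E_{\frac{n}{p_b^{t+1}}}$ and $K_{1,b}^{1,t+1}$. Hence it is enough to prove $|Q| > |E_{\frac{n}{p_b^{t+1}}}| + |K_{1,b}^{1,t+1}|$. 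Here $p_1 = 2$ makes $\phi(p_1) = 1$ small, which is precisely what should make $|Q|$ large enough: $|Q|$ carries a factor $\frac{p_1 \cdots p_r}{p_1 p_b} - \phi(\frac{p_1\cdots p_r}{p_1 p_b}) = \frac{p_1\cdots p_r}{p_1 p_b} - \phi(\frac{p_1\cdots p_r}{p_1 p_b})$ while the competing terms involve $\phi$ of comparable quantities; with $p_1 = 2$ the inclusion–exclusion sum for $\prod p_i - \phi(\prod p_i)$ is dominated by the $\frac{\eta}{2}$ term.

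The main obstacle I anticipate is getting the inequality to come out \emph{strict} and in the right direction while $p_b \geq 3$ is possibly as small as $3$ and $t$ can be as large as $n_b - 1$: the factor $\frac{1}{p_b^{t}}$ in $|Q|$ (coming from Lemma \ref{lem-Q-a-b-s}-type estimates, with $p_a^{s-1}$ replaced by $p_b^{t}$ since $s = 1$) shrinks $|Q|$ as $t$ grows, so I must check the comparison survives at the worst case $t = n_b - 1$, $p_b = 3$. The key leverage is that $(p_a,n_a)=(2,1)$ forces $p_1 = 2$, so $\frac{\phi(p_1\cdots p_r)}{p_1} = \frac{\phi(p_1\cdots p_r)}{2}$ and the quantity $\frac{p_1\cdots p_r}{p_1 p_b} - \phi(\frac{p_1\cdots p_r}{p_1 p_b})$ is at least roughly half of $\frac{p_1\cdots p_r}{p_1 p_b}$; combined with $r \geq 4$ (so the product $p_1\cdots p_r$ has at least two prime factors besides $p_1, p_b$) this slack should be more than enough. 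If the direct comparison with $X_{1,b}^{1,t+1}$ turns out to be tight, the fallback is to compare instead with $Z_b^{t+1}$ or to invoke Lemma \ref{lem-to-use}/Lemma \ref{lem-two-atmost}-style bounds to absorb the $E_{\frac{n}{p_b^{t+1}}}$ term; in all cases the punchline is a strict inequality $|X| > |X_{1,b}^{1,t+1}|$ (or $> |Z_b^{t+1}|$), contradicting minimality.
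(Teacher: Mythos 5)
Your reduction to the single inequality $|Q| > |E_{\frac{n}{p_b^{t+1}}}| + |K_{1,b}^{1,t+1}|$ is where the argument breaks, for two reasons. First, the bookkeeping is off: $H_{1,b}^{1,t+1}$ is the disjoint union of $H_{1,b}^{1,t}$, $E_{\frac{n}{2p_b^{t}}}$ and $E_{\frac{n}{p_b^{t+1}}}$ (and $E_n$ is already inside $H_{1,b}^{1,t}$, so listing it separately is redundant). The block $E_{\frac{n}{2p_b^{t}}}=E_{\frac{n}{p_a^{s}p_b^{t}}}$ lies in $A$, so it can never be charged to $X$, and the comparison you actually need is $|Q| > |E_{\frac{n}{2p_b^{t}}}| + |E_{\frac{n}{p_b^{t+1}}}| + |K_{1,b}^{1,t+1}|$. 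Second, and fatally, even your weaker inequality is false: take $n=2\cdot 3^{2}\cdot 5\cdot 7=630$, $a=1$, $b=2$, $s=t=1$. Then $Q=S_{21}\cup S_{15}$ has $21+15-3=33$ elements, while $|E_{70}|+|K_{1,2}^{1,2}|=\phi(70)+\bigl(35-\phi(35)\bigr)=24+11=35>33$, and the corrected right-hand side is $48+24+11=83$. So the comparison with $X_{1,b}^{1,t+1}$ cannot be closed from the sets forced into $X$; the shrinking factor $1/p_b^{t}$ you flagged as a worry really does defeat the inequality already at $p_b=3$, $t=1$, and no rearrangement of the $\phi$-estimates will rescue it.

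The paper's proof takes a genuinely different comparison, and the difference is essential. It first invokes Proposition \ref{nr-1} to get $n_r=1$; since $t<n_b$ forces $n_b\geq 2$, this gives $b<r$ and $2<p_b<p_r$. It then compares $X$ not with $X_{1,b}^{1,t+1}$ but with $X_{r,b}^{1,t}$, i.e.\ it keeps the exponent pair $(1,t)$ and replaces the prime $2$ by the largest prime $p_r$. The forced sets are matched block by block: $|H_{1,b}^{1,t}|\geq |H_{r,b}^{1,t}|$ because $\phi\bigl(\frac{n}{2^ip_b^j}\bigr)\geq\phi\bigl(\frac{n}{p_r^ip_b^j}\bigr)$, and the union $G$ of the subgroups $S_{\frac{n}{2p_ip_b^{t}}}$ (your $Q$) satisfies $|G|>|K_{r,b}^{1,t}|$, the strict inequality coming from choosing $k\in[r]\setminus\{1,b,r\}$ (this is where $r\geq 4$ enters) and using $p_r-p_k\geq 2$. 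Your proposal uses neither $n_r=1$ nor the replacement of $2$ by $p_r$, and your fallback targets ($Z_b^{t+1}$, $Z_b^{1}$, Lemma \ref{lem-to-use}) are only named, not argued, so as written the proof has a genuine gap.
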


\begin{proof}
Suppose that $(p_a,n_a)=(2,1)$. Then $a=1$ and $s=n_1=1$. We have $n_r=1$ by Proposition \ref{nr-1}. So $1<b < r$ (as $n_b>t\geq 1$) and hence $2<p_b <p_r$. We show that $|X| > \left|X_{r,b}^{1,t}\right|$, which would contradict Corollary \ref{cor-X-a-b-s-t}.

The cut-set $X_{r,b}^{1,t}$ of $\mathcal{P}(C_n)$ is a disjoint union of $H_{r,b}^{1,t}$ and $K_{r,b}^{1,t}$. By Proposition \ref{elements-in-X}, $X$ contains $H_{1,b}^{1,t}$ and the subgroups $S_{\frac{n}{2p_b^tp_i}}$ of $C_n$, where $i\in [r]\setminus\{1,b\}$. We know that $H_{r,b}^{1,t}$ is a disjoint union of the sets $E_{\frac{n}{p_r^i p_b^j}}$ and $H_{1,b}^{1,t}$ is a disjoint union of the sets $E_{\frac{n}{2^i p_b^j}}$, where $0\leq i\leq 1$, $0\leq j\leq t$ and $(i,j)\neq (1,t)$. For such $i$ and $j$, $p_r >2$ implies that $\left|E_{\frac{n}{2^i p_b^j}}\right| = \phi\left(\frac{n}{2^i p_b^j}\right)\geq \phi\left(\frac{n}{p_r^i p_b^j}\right)=\left|E_{\frac{n}{p_r^i p_b^j}}\right|$. It then follows that $\left| H_{1,b}^{1,t}\right| \geq \left| H_{r,b}^{1,t}\right|$.

Let $G$ be union of the subgroups $S_{\frac{n}{2p_b^t p_i}}$ of $C_n$, where $i\in [r]\setminus\{1,b\}$. Since each of these subgroups is disjoint from $H_{1,b}^{1,t}$ and $K_{r,b}^{1,t}$ is disjoint from $H_{r,b}^{1,t}$, it is enough to show that $|G| > \left| K_{r,b}^{1,t}\right|$. Since the set $K_{r,b}^{1,t}$ is the set of nongenerators of the cyclic subgroup $S_{\frac{n}{p_r p_b^t}}$ of $C_n$, we have
$$\left| K_{r,b}^{1,t}\right|=\frac{n}{p_rp_b^{t}}-\phi\left(\frac{n}{p_rp_b^{t}}\right)=\frac{n}{2p_2\cdots p_r}\times \frac{1}{p_b^{t}}\times \left[2p_2\cdots p_{r-1} - \phi\left(2p_2\cdots p_{r-1}\right)\right].$$
Consider the subgroup $S_{\frac{n}{2p_b^t}}$ of $C_n$ containing $G$. Let $\widetilde{G}$ denote the complement of $G$ in $S_{\frac{n}{2p_b^t}}$. Then $\widetilde{G}=E_{\frac{n}{2p_b^t}}\cup E_{\frac{n}{2p_b^{t+1}}}\cup \cdots \cup E_{\frac{n}{2 p_b^{n_b}}}$, a disjoint union. So
$$
|\widetilde{G}|=\underset{l=t}{\overset{n_b}{\sum}} \left\vert E_{\frac{n}{2p_b^{l}}} \right\vert =\underset{l=t}{\overset{n_b}{\sum}} \phi\left( \frac{n}{2p_b^{l}}\right)= \phi\left( \frac{n}{2p_b^{n_b}}\right)\times p_b^{n_b-t} = \frac{n}{2p_2\cdots p_r}\times \frac{1}{p_b^{t-1}}\times \phi\left(\frac{p_2p_3\cdots p_{r}}{p_b}\right).
$$
and hence
$$|G|=\left|S_{\frac{n}{2p_b^t}} \right| - |\widetilde{G}|= \frac{n}{2p_2\cdots p_r}\times \frac{1}{p_b^{t-1}}\times \left[\frac{p_2p_3\cdots p_{r}}{p_b}-\phi\left(\frac{p_2p_3\cdots p_{r}}{p_b}\right)\right].$$
Writing $\mu=\frac{n}{2p_2\cdots p_r}\times \frac{1}{p_b^{t-1}}$, we get
\begin{align*}
|G|- \left| K_{r,b}^{1,t}\right| &= \mu\times \left[\frac{p_2p_3\cdots p_{r}}{p_b} - \frac{2p_2\cdots p_{r-1}}{p_b} - \phi\left(\frac{p_2p_3\cdots p_{r}}{p_b}\right)+\frac{1}{p_b}\phi\left(2p_2\cdots p_{r-1}\right) \right]\\
 & = \mu\times \left[\frac{p_2p_3\cdots p_{r-1}}{p_b}\times(p_r-2) - \phi\left(\frac{p_2p_3\cdots p_{r-1}}{p_b}\right) \left(\phi(p_r)-\frac{\phi(p_b)}{p_b}\right)\right]\\
 & > \mu\times \left[\frac{p_2p_3\cdots p_{r-1}}{p_b}\times(p_r-2) - \phi\left(\frac{p_2p_3\cdots p_{r-1}}{p_b}\right) (p_r -1)\right].
\end{align*}
Let $k\in [r]\setminus\{1,b,r\}$. Such a $k$ exists as $r\geq 4$. Then $p_r-p_k\geq 2$ (both being odd primes) and so $p_k(p_r-2)-\phi(p_k)(p_r-1)=p_r-p_k-1 >0$. Hence it follows that
$$\left[\frac{p_2p_3\cdots p_{r-1}}{p_b}\times(p_r-2) - \phi\left(\frac{p_2p_3\cdots p_{r-1}}{p_b}\right) (p_r -1)\right] >0$$
and therefore, $|G| > \left| K_{r,b}^{1,t}\right|$. This completes the proof.
\end{proof}

Recall that the cut-set $X_{a,b}^{s,t}$ of $\mathcal{P}(C_n)$ is a disjoint union of $H_{a,b}^{s,t}$ and $K_{a,b}^{s,t}$. Let $x\in X_{a,b}^{s,t}$. Then $x\in H_{a,b}^{s,t}$ if and only if $o(x)$ is divisible by $\frac{n}{p_a^s p_b^t}$ with $o(x)\neq \frac{n}{p_a^s p_b^t}$. Further, $x\in K_{a,b}^{s,t}$ if and only if $o(x)$ divides $\frac{n}{p_a^s p_b^t}$ with $o(x)\neq \frac{n}{p_a^s p_b^t}$ if and only if $x\in S_{\frac{n}{p_ip_a^sp_b^t}}$ for some $i\in J$, where
$$J:=
\begin{cases}
[r]\setminus\{a\}, & \text{if } s=n_a,\; t<n_b\\
[r]\setminus\{b\}, & \text{if } s<n_a,\; t=n_b\\
[r], &\text{if } s<n_a,\; t<n_b
\end{cases}$$
as $(s,t)\neq (n_a, n_b)$. Thus $K_{a,b}^{s,t}$ is the union of the subgroups $S_{\frac{n}{p_ip_a^{s}p_b^{t}}}$ with $i\in J$.

Let $L_{a,b}^{s,t}$ denote the set of all elements of $K_{a,b}^{s,t}$ which do not belong to any of the subgroups $S_{\frac{n}{p_ip_a^sp_b^t}}$, where $i\in J\setminus\{a,b\}=[r]\setminus\{a,b\}$. Since $(s,t)\neq (n_a,n_b)$, we have $L_{a,b}^{s,t}\neq \emptyset$. Further, $L_{a,b}^{s,t}$ is precisely union of the mutually disjoint sets $E_{\frac{n}{p_a^kp_b^l}}$, where $s\leq k\leq n_a$, $t\leq l\leq n_b$ with $(k,l)\neq (s,t)$.

\begin{proposition}\label{L-a-b-s-t}
$\left|L_{a,b}^{s,t}\right|=\phi\left(\frac{n}{p_a^{n_a}p_b^{n_b}}\right)\left[p_a^{n_a-s}p_b^{n_b-t}-\phi\left(p_a^{n_a-s}p_b^{n_b-t}\right)\right]$.
\end{proposition}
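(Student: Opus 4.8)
The plan is to compute $\left|L_{a,b}^{s,t}\right|$ directly from its description as a disjoint union of the sets $E_{\frac{n}{p_a^k p_b^l}}$ with $s\leq k\leq n_a$, $t\leq l\leq n_b$ and $(k,l)\neq(s,t)$. Since this union is disjoint, its cardinality is $\sum_{k=s}^{n_a}\sum_{l=t}^{n_b}\left|E_{\frac{n}{p_a^k p_b^l}}\right|$ minus the one term corresponding to $(k,l)=(s,t)$, namely $\left|E_{\frac{n}{p_a^s p_b^t}}\right|$.

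For the double sum, I would invoke Lemma \ref{lem-2.4} verbatim, which gives
$$\underset{k=s}{\overset{n_a}{\sum}}\;\underset{l=t}{\overset{n_b}{\sum}}\left\vert E_{\frac{n}{p_a^{k}p_b^{l}}}\right\vert =\phi\left(\frac{n}{p_a^{n_a}p_b^{n_b}}\right) p_a^{n_a-s} p_b^{n_b-t}.$$
For the subtracted term, $\left|E_{\frac{n}{p_a^s p_b^t}}\right|=\phi\left(\frac{n}{p_a^s p_b^t}\right)$, and I would factor this as $\phi\left(\frac{n}{p_a^{n_a}p_b^{n_b}}\right)\phi\left(p_a^{n_a-s}\right)\phi\left(p_b^{n_b-t}\right)$ using that $\phi$ is multiplicative and that $\frac{n}{p_a^s p_b^t}=\frac{n}{p_a^{n_a}p_b^{n_b}}\cdot p_a^{n_a-s}\cdot p_b^{n_b-t}$ is a product of pairwise coprime factors. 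A further application of multiplicativity (the factors $p_a^{n_a-s}$ and $p_b^{n_b-t}$ are coprime, including the degenerate cases where an exponent is $0$, since $\phi(1)=1$) rewrites this as $\phi\left(\frac{n}{p_a^{n_a}p_b^{n_b}}\right)\phi\left(p_a^{n_a-s}p_b^{n_b-t}\right)$.

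Subtracting and factoring out $\phi\left(\frac{n}{p_a^{n_a}p_b^{n_b}}\right)$ yields the claimed formula
$$\left|L_{a,b}^{s,t}\right|=\phi\left(\frac{n}{p_a^{n_a}p_b^{n_b}}\right)\left[p_a^{n_a-s}p_b^{n_b-t}-\phi\left(p_a^{n_a-s}p_b^{n_b-t}\right)\right].$$
There is essentially no obstacle here; the only points requiring a word of care are that the union really is disjoint (already established in the text preceding the statement) and that the $\phi$-factorizations remain valid when $s=n_a$ or $t=n_b$, which is handled by the convention $\phi(1)=1$ and $p^0=1$. I would keep the proof to these few lines.
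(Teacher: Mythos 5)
Your proposal is correct and matches the paper's own argument essentially verbatim: the paper also writes $\left|L_{a,b}^{s,t}\right|$ as the full double sum over $s\leq k\leq n_a$, $t\leq l\leq n_b$ evaluated via Lemma \ref{lem-2.4}, minus the single term $\left|E_{\frac{n}{p_a^{s}p_b^{t}}}\right|=\phi\left(\frac{n}{p_a^{n_a}p_b^{n_b}}\right)\phi\left(p_a^{n_a-s}p_b^{n_b-t}\right)$ obtained by multiplicativity of $\phi$. Your additional remarks about disjointness and the degenerate cases $s=n_a$ or $t=n_b$ are fine but not needed beyond what the paper already records.
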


\begin{proof}
We have $\left|L_{a,b}^{s,t}\right|=\sum \left| E_{\frac{n}{p_a^kp_b^l}}\right|$, where the sum runs over all $k,l$ satisfying $s\leq k\leq n_a$, $t\leq l\leq n_b$ and $(k,l)\neq (s,t)$. Since $\underset{k=s}{\overset{n_a}{\sum}}\; \underset{l=t}{\overset{n_b}{\sum}} \left\vert E_{\frac{n}{p_a^{k}p_b^{l}}} \right\vert =\phi\left( \frac{n}{p_a^{n_a}p_b^{n_b}}\right) p_a^{n_a-s} p_b^{n_b-t}$ by Lemma \ref{lem-2.4}, and $\left| E_{\frac{n}{p_a^{s}p_b^{t}}}\right|=\phi\left(\frac{n}{p_a^{s}p_b^{t}}\right)=\phi\left( \frac{n}{p_a^{n_a}p_b^{n_b}}\right) \phi\left(p_a^{n_a-s} p_b^{n_b-t}\right)$, the proposition follows.
\end{proof}

\begin{proposition}\label{at-most-L}
The number of elements of $X$ which do not belong to $H_{a,b}^{s,t}$ or $K_{a,b}^{s,t}\setminus L_{a,b}^{s,t}$ is at most $\left|L_{a,b}^{s,t}\right|$.
\end{proposition}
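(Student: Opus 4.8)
The plan is to derive the bound directly from the minimality of $X$, using Corollary~\ref{cor-X-a-b-s-t} and the membership facts already established in Proposition~\ref{elements-in-X}. The key observation is that the cut-set $X_{a,b}^{s,t}=H_{a,b}^{s,t}\cup K_{a,b}^{s,t}$ is a disjoint union (as noted in the proof of Proposition~\ref{prop-X-a-b-s-t}), and that $K_{a,b}^{s,t}$ itself is the disjoint union of $L_{a,b}^{s,t}$ and $K_{a,b}^{s,t}\setminus L_{a,b}^{s,t}$, where, by the very definition of $L_{a,b}^{s,t}$, the complement $K_{a,b}^{s,t}\setminus L_{a,b}^{s,t}$ is exactly the union of the subgroups $S_{\frac{n}{p_ip_a^sp_b^t}}$ with $i\in[r]\setminus\{a,b\}$.

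First I would note that, by Proposition~\ref{elements-in-X}, $X$ contains $H_{a,b}^{s,t}$ together with all the subgroups $S_{\frac{n}{p_ip_a^sp_b^t}}$, $i\in[r]\setminus\{a,b\}$; hence $X$ contains the set $H_{a,b}^{s,t}\cup(K_{a,b}^{s,t}\setminus L_{a,b}^{s,t})$, which is a disjoint union because $H_{a,b}^{s,t}$ and $K_{a,b}^{s,t}$ are disjoint. Writing $R$ for the set of those elements of $X$ that lie in neither $H_{a,b}^{s,t}$ nor $K_{a,b}^{s,t}\setminus L_{a,b}^{s,t}$, the set $R$ is disjoint from $H_{a,b}^{s,t}\cup(K_{a,b}^{s,t}\setminus L_{a,b}^{s,t})$ by construction, and $X$ is the disjoint union of these three pieces. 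Therefore $|X|=|H_{a,b}^{s,t}|+|K_{a,b}^{s,t}\setminus L_{a,b}^{s,t}|+|R|=|H_{a,b}^{s,t}|+|K_{a,b}^{s,t}|-|L_{a,b}^{s,t}|+|R|$.

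Finally I would invoke Corollary~\ref{cor-X-a-b-s-t}, which gives $|X|\le|X_{a,b}^{s,t}|=|H_{a,b}^{s,t}|+|K_{a,b}^{s,t}|$. Comparing this with the expression for $|X|$ obtained above yields $|R|\le|L_{a,b}^{s,t}|$, and since $R$ is precisely the set of elements of $X$ not belonging to $H_{a,b}^{s,t}$ or to $K_{a,b}^{s,t}\setminus L_{a,b}^{s,t}$, this is the claimed bound. This argument is purely a counting step: there is no genuine obstacle, and the only point requiring a little care is making the disjointness of $H_{a,b}^{s,t}$, $K_{a,b}^{s,t}\setminus L_{a,b}^{s,t}$ and $R$ explicit and correctly identifying $K_{a,b}^{s,t}\setminus L_{a,b}^{s,t}$ with the union of the subgroups covered by Proposition~\ref{elements-in-X}.
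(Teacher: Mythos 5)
Your proof is correct and follows essentially the same route as the paper: decompose $X_{a,b}^{s,t}$ into the disjoint pieces $H_{a,b}^{s,t}$, $L_{a,b}^{s,t}$ and $K_{a,b}^{s,t}\setminus L_{a,b}^{s,t}$, note via Proposition \ref{elements-in-X} that $X$ contains the first and third pieces, and conclude by minimality of $X$ (Corollary \ref{cor-X-a-b-s-t}). You merely spell out the counting step that the paper leaves implicit, including the correct identification of $K_{a,b}^{s,t}\setminus L_{a,b}^{s,t}$ with the union of the subgroups $S_{\frac{n}{p_ip_a^sp_b^t}}$, $i\in[r]\setminus\{a,b\}$.
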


\begin{proof}
The cut-set $X_{a,b}^{s,t}$ of $\mathcal{P}(C_n)$ is a disjoint union of $H_{a,b}^{s,t}$, $L_{a,b}^{s,t}$ and $K_{a,b}^{s,t}\setminus L_{a,b}^{s,t}$. By Proposition \ref{elements-in-X}, $X$ contains the sets $H_{a,b}^{s,t}$ and $K_{a,b}^{s,t}\setminus L_{a,b}^{s,t}$. Then the proposition follows as $X$ is a minimum cut-set of $\mathcal{P}(C_n)$.
\end{proof}

\begin{proposition}\label{s-t-less}
If $s<n_a$ and $t<n_b$, then at least one of the sets $E_{\frac{n}{p_a^{s+1}}}$ and $E_{\frac{n}{p_b^{t+1}}}$ is contained in $B$.
\end{proposition}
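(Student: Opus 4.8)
The plan is to argue by contradiction, using the ``counting'' bound provided by Proposition~\ref{at-most-L}. Since $s<n_a$ and $t<n_b$, the integers $\frac{n}{p_a^{s+1}}$ and $\frac{n}{p_b^{t+1}}$ are orders of elements of $C_n$ of the form $\frac{n}{p_c^k}$ with $c\in\{a,b\}$ and $k\in[n_c]$, so Proposition~\ref{two-2-1} shows that neither $E_{\frac{n}{p_a^{s+1}}}$ nor $E_{\frac{n}{p_b^{t+1}}}$ is contained in $A$; hence each of them is contained in $B$ or in $X$. Assume, towards a contradiction, that both are contained in $X$. An element of order $\frac{n}{p_a^{s+1}}$ lies neither in $H_{a,b}^{s,t}$ (the exponent of $p_a$ in its index is $s+1>s$) nor in $K_{a,b}^{s,t}$ (because $\frac{n}{p_a^{s+1}}$ does not divide $\frac{n}{p_a^sp_b^t}$), and the same holds for an element of order $\frac{n}{p_b^{t+1}}$; since $E_{\frac{n}{p_a^{s+1}}}$ and $E_{\frac{n}{p_b^{t+1}}}$ are disjoint, Proposition~\ref{at-most-L} gives
$$\phi\left(\frac{n}{p_a^{s+1}}\right)+\phi\left(\frac{n}{p_b^{t+1}}\right)\le\left|L_{a,b}^{s,t}\right|.$$

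Next I would rewrite each term as the common factor $\phi\left(\frac{n}{p_a^{n_a}p_b^{n_b}}\right)$ times a function of $p_a,p_b$ and the exponents. By multiplicativity of $\phi$,
$$\phi\left(\frac{n}{p_a^{s+1}}\right)=\phi\left(\frac{n}{p_a^{n_a}p_b^{n_b}}\right)\phi\left(p_a^{n_a-s-1}\right)\phi\left(p_b^{n_b}\right),$$
and symmetrically for $\phi\left(\frac{n}{p_b^{t+1}}\right)$, while Proposition~\ref{L-a-b-s-t} gives $\left|L_{a,b}^{s,t}\right|=\phi\left(\frac{n}{p_a^{n_a}p_b^{n_b}}\right)\left[p_a^{n_a-s}p_b^{n_b-t}-\phi\left(p_a^{n_a-s}\right)\phi\left(p_b^{n_b-t}\right)\right]$. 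Writing $\alpha=n_a-s\ge1$ and $\beta=n_b-t\ge1$ and using the identity $p_a^{\alpha}p_b^{\beta}-\phi\left(p_a^{\alpha}\right)\phi\left(p_b^{\beta}\right)=p_a^{\alpha-1}p_b^{\beta-1}(p_a+p_b-1)$, cancelling the (positive) common factor reduces the displayed inequality to
$$\phi\left(p_a^{\alpha-1}\right)\phi\left(p_b^{n_b}\right)+\phi\left(p_a^{n_a}\right)\phi\left(p_b^{\beta-1}\right)\le p_a^{\alpha-1}p_b^{\beta-1}(p_a+p_b-1).$$

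The remaining task, and the only genuine computation, is to show that this last inequality is impossible. Since $s,t\ge1$ we have $n_a\ge\alpha+1$ and $n_b\ge\beta+1$, so $\phi\left(p_a^{n_a}\right)\ge p_a^{\alpha}(p_a-1)$ and $\phi\left(p_b^{n_b}\right)\ge p_b^{\beta}(p_b-1)$, whence the left-hand side is at least $\phi\left(p_a^{\alpha-1}\right)p_b^{\beta}(p_b-1)+p_a^{\alpha}(p_a-1)\phi\left(p_b^{\beta-1}\right)$. I would then split into four cases according to whether $\alpha=1$ or $\alpha\ge2$ and whether $\beta=1$ or $\beta\ge2$, using $\phi\left(p^{0}\right)=1$ and $\phi\left(p^{k-1}\right)=p^{k-2}(p-1)$ for $k\ge2$. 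In each case, after dividing out the appropriate powers of $p_a$ and $p_b$, one reduces to a polynomial inequality in the two primes: in the generic case $\alpha,\beta\ge2$ it is $(p_a-1)(p_b-1)(p_a^2+p_b^2)>p_ap_b(p_a+p_b-1)$, which holds for all primes $p_a<p_b$ (the closest instance being $(p_a,p_b)=(2,3)$, where it reads $26>24$), and the degenerate cases reduce to strictly easier inequalities such as $(p_a-1)^2+(p_b-1)^2>1$. These contradictions force at least one of $E_{\frac{n}{p_a^{s+1}}},E_{\frac{n}{p_b^{t+1}}}$ to be contained in $B$, as claimed. The main obstacle is purely organizational: in the degenerate sub-cases ($\alpha=1$ or $\beta=1$) a bound using only one of the two summands on the left is too weak, so one must retain both; once this is observed, each case is a one-line estimate using only $p_a<p_b$ and $p_a\ge2$.
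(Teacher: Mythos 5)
Your proposal is correct and follows essentially the same route as the paper's proof: both argue by contradiction from the bound $\left|E_{\frac{n}{p_a^{s+1}}}\right|+\left|E_{\frac{n}{p_b^{t+1}}}\right|\leq\left|L_{a,b}^{s,t}\right|$ supplied by Propositions \ref{at-most-L} and \ref{L-a-b-s-t}, factor out $\phi\left(\frac{n}{p_a^{n_a}p_b^{n_b}}\right)$, and refute the resulting prime-power inequality by the same four-way case split (your $\alpha=1$ or $\alpha\geq 2$, $\beta=1$ or $\beta\geq 2$ is exactly the paper's $s+1=n_a$ or $s+1<n_a$, $t+1=n_b$ or $t+1<n_b$). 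The only difference is cosmetic algebra — you simplify $p_a^{\alpha}p_b^{\beta}-\phi\bigl(p_a^{\alpha}p_b^{\beta}\bigr)=p_a^{\alpha-1}p_b^{\beta-1}(p_a+p_b-1)$ and bound $\phi\left(p_a^{n_a}\right),\phi\left(p_b^{n_b}\right)$ from below, where the paper bounds $\frac{p_b^{t+1}+p_a^{s+1}}{p_ap_b}>2$ — and your stated case inequalities check out.
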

\begin{proof}
By Proposition \ref{two-2-1}, each of $E_{\frac{n}{p_a^{s+1}}}$ and $E_{\frac{n}{p_b^{t+1}}}$ is contained in either $B$ or $X$. If possible, suppose that both $E_{\frac{n}{p_a^{s+1}}}$ and $E_{\frac{n}{p_b^{t+1}}}$  are contained in $X$. Note that $E_{\frac{n}{p_a^{s+1}}}$ and $E_{\frac{n}{p_b^{t+1}}}$ are disjoint and each of them is disjoint from the cut-set $X_{a,b}^{s,t}$ of $\mathcal{P}(C_n)$. By proposition \ref{at-most-L}, we have
$\left|E_{\frac{n}{p_a^{s+1}}}\right|+ \left|E_{\frac{n}{p_b^{t+1}}}\right|\leq \left|L_{a,b}^{s,t}\right|$,
that is,
\begin{equation}\label{ineq-E-L}
\xi:=\left|E_{\frac{n}{p_a^{s+1}}}\right| + \left|E_{\frac{n}{p_b^{t+1}}}\right| -\left|L_{a,b}^{s,t}\right|\leq 0.
\end{equation}
By Proposition \ref{L-a-b-s-t}, we have
$\left|L_{a,b}^{s,t}\right|=\phi\left(\frac{n}{p_a^{n_a}p_b^{n_b}}\right)\left[p_a^{n_a-s}p_b^{n_b-t}-\phi\left(p_a^{n_a-s}p_b^{n_b-t}\right)\right].$
Further,
$$\left|E_{\frac{n}{p_a^{s+1}}}\right|=\phi\left( \frac{n}{p_a^{s+1}}\right)=\phi\left(\frac{n}{p_a^{n_a}p_b^{n_b}}\right)\phi\left(p_a^{n_a-s-1}p_b^{n_b}\right)$$
and 
$$\left|E_{\frac{n}{p_b^{t+1}}}\right|=\phi\left( \frac{n}{p_b^{t+1}}\right)=\phi\left(\frac{n}{p_a^{n_a}p_b^{n_b}}\right)\phi\left(p_a^{n_a}p_b^{n_b-t-1}\right).$$
So
$$\xi =\phi\left(\frac{n}{p_a^{n_a}p_b^{n_b}}\right) \left[\phi\left(p_a^{n_a-s-1}p_b^{n_b}\right) + \phi\left(p_a^{n_a}p_b^{n_b-t-1}\right) +\phi\left(p_a^{n_a-s}p_b^{n_b-t}\right)-p_a^{n_a-s}p_b^{n_b-t}\right].$$
We show that $\xi >0$, thus getting a contradiction to (\ref{ineq-E-L}). We consider four different cases depending on $s$ and $t$.\medskip

\noindent (i) If $s+1 < n_a$ and $t+1 < n_b$, then
\begin{align*}
\xi &=\phi\left(\frac{n}{p_a^{n_a}p_b^{n_b}}\right)\times\frac{p_a^{n_a}p_b^{n_b}}{p_a^{s+1}p_b^{t+1}}\times\left[\phi(p_ap_b)\left(\frac{p_b^t}{p_a} + \frac{p_a^s}{p_b} +1\right)-p_ap_b\right].
\end{align*}
Since $\frac{p_b^t}{p_a} + \frac{p_a^s}{p_b}=\frac{p_b^{t+1}+p_a^{s+1}}{p_ap_b}\geq \frac{p_b^2+p_a^2}{p_ap_b}> 2$, it follows that $\xi >0$.\medskip

\noindent (ii) If $s+1 = n_a$ and $t+1 < n_b$, then
\begin{align*}
\xi & =\phi\left(\frac{n}{p_a^{n_a}p_b^{n_b}}\right) \left[\phi\left(p_b^{n_b}\right) + \phi\left(p_a^{n_a}p_b^{n_b-t-1}\right) +\phi\left(p_ap_b^{n_b-t}\right)-p_ap_b^{n_b-t}\right]\\
&=\phi\left(\frac{n}{p_a^{n_a}p_b^{n_b}}\right)\times\frac{p_b^{n_b}}{p_b^{t+1}}\times\left[\phi(p_ap_b)\left(\frac{p_b^t}{\phi(p_a)} + \frac{p_a^{n_a-1}}{p_b} +1\right)-p_ap_b\right].
\end{align*}
Since $\frac{p_b^t}{\phi(p_a)} + \frac{p_a^{n_a-1}}{p_b} > \frac{p_b^t}{p_a} + \frac{p_a^{n_a-1}}{p_b}=\frac{p_b^{t+1}+p_a^{n_a}}{p_ap_b}\geq \frac{p_b^2+p_a^2}{p_ap_b} > 2$, it follows that $\xi >0$.\medskip

\noindent (iii) If $s+1 < n_a$ and $t+1 = n_b$, then
\begin{align*}
\xi & =\phi\left(\frac{n}{p_a^{n_a}p_b^{n_b}}\right) \left[\phi\left(p_a^{n_a-s-1}p_b^{n_b}\right) + \phi\left(p_a^{n_a}\right) + \phi\left(p_a^{n_a-s}p_b\right) -p_a^{n_a-s}p_b\right]\\
&=\phi\left(\frac{n}{p_a^{n_a}p_b^{n_b}}\right)\times\frac{p_a^{n_a}}{p_a^{s+1}}\times\left[\phi(p_ap_b)\left(\frac{p_b^{n_b-1}}{p_a} + \frac{p_a^{s}}{\phi(p_b)} +1\right)-p_ap_b\right].
\end{align*}
Since $\frac{p_b^{n_b-1}}{p_a} + \frac{p_a^{s}}{\phi(p_b)} > \frac{p_b^{n_b-1}}{p_a} + \frac{p_a^{s}}{p_b}=\frac{p_b^{n_b}+p_a^{s+1}}{p_ap_b}\geq \frac{p_b^2+p_a^2}{p_ap_b} > 2$, it follows that $\xi >0$.\medskip

\noindent (iv) If $s+1 = n_a$ and $t+1 = n_b$, then
\begin{align*}
\xi & =\phi\left(\frac{n}{p_a^{n_a}p_b^{n_b}}\right) \left[\phi\left(p_b^{n_b}\right) + \phi\left(p_a^{n_a}\right) + \phi\left(p_ap_b\right) -p_ap_b\right]\\
&=\phi\left(\frac{n}{p_a^{n_a}p_b^{n_b}}\right)\left[\phi(p_ap_b)\left(\frac{p_b^{n_b-1}}{\phi(p_a)} + \frac{p_a^{n_a-1}}{\phi(p_b)} +1\right)-p_ap_b\right].
\end{align*}
Since $\frac{p_b^{n_b-1}}{\phi(p_a)} + \frac{p_a^{n_a-1}}{\phi(p_b)} > \frac{p_b^{n_b-1}}{p_a} + \frac{p_a^{n_a-1}}{p_b}=\frac{p_b^{n_b}+p_a^{n_a}}{p_ap_b}\geq \frac{p_b^2+p_a^2}{p_ap_b} > 2$, it follows that $\xi >0$.
\end{proof}

Let $M_{a,b}^{s,t}$ denote the set of elements of $K_{a,b}^{s,t}$ which do not belong to any of the subgroups $S_{\frac{n}{p_ip_a^sp_b^t}}$, where $i\in J\setminus\{a\}$. We have $M_{a,b}^{s,t}=\emptyset$ if and only if $s=n_a$. If $s < n_a$, then $M_{a,b}^{s,t}$ is a subset of $S_{\frac{n}{p_a^{s+1}p_b^t}}$ and is precisely union of the mutually disjoint sets $E_{\frac{n}{p_a^kp_b^t}}$, where $s+1\leq k\leq n_a$.

Similarly, let $N_{a,b}^{s,t}$ denote the set of elements of $K_{a,b}^{s,t}$ which do not belong to any of the subgroups $S_{\frac{n}{p_ip_a^sp_b^t}}$, where $i\in J\setminus\{b\}$. We have $N_{a,b}^{s,t}=\emptyset$ if and only if $t=n_b$. If $t < n_b$, then $N_{a,b}^{s,t}$ is a subset of $S_{\frac{n}{p_a^{s}p_b^{t+1}}}$ and is precisely union of the mutually disjoint sets $E_{\frac{n}{p_a^sp_b^l}}$, where $t+1\leq l\leq n_b$.

The proof of the following proposition is straightforward.

\begin{proposition}\label{M-N-a-b-s-t}
The following hold:
\begin{enumerate}
\item[(i)] If $s<n_a$, then $\left|M_{a,b}^{s,t}\right|=\phi\left(\frac{n}{p_a^{n_a}p_b^{t}}\right) p_a^{n_a-s-1}$.

\item[(ii)] If $t<n_b$, then $\left|N_{a,b}^{s,t}\right|=\phi\left(\frac{n}{p_a^{s}p_b^{n_b}}\right) p_b^{n_b-t-1}$.
\end{enumerate}
\end{proposition}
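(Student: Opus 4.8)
The plan is to read off both cardinalities directly from the explicit descriptions of $M_{a,b}^{s,t}$ and $N_{a,b}^{s,t}$ given just before the statement, and then to evaluate the resulting single sums using multiplicativity of $\phi$ together with the elementary identity $\underset{i=0}{\overset{m}{\sum}}\phi\left(p^i\right)=p^m$ for a prime $p$ and an integer $m\geq 0$ (the same telescoping identity underlying the proof of Lemma \ref{lem-2.1}).

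For (i), suppose $s<n_a$. Since $M_{a,b}^{s,t}$ is the disjoint union of the sets $E_{\frac{n}{p_a^kp_b^t}}$ with $s+1\leq k\leq n_a$, I would first write
\[
\left|M_{a,b}^{s,t}\right|=\underset{k=s+1}{\overset{n_a}{\sum}}\left|E_{\frac{n}{p_a^kp_b^t}}\right|=\underset{k=s+1}{\overset{n_a}{\sum}}\phi\left(\frac{n}{p_a^kp_b^t}\right).
\]
Next, because $a\neq b$, the prime $p_a$ does not divide $\frac{n}{p_a^{n_a}p_b^t}$, so multiplicativity of $\phi$ gives $\phi\left(\frac{n}{p_a^kp_b^t}\right)=\phi\left(\frac{n}{p_a^{n_a}p_b^t}\right)\phi\left(p_a^{n_a-k}\right)$ for each $k$ in the range (with $\phi(1)=1$ at $k=n_a$). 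Factoring this constant out and substituting $i=n_a-k$ turns the sum into $\phi\left(\frac{n}{p_a^{n_a}p_b^t}\right)\underset{i=0}{\overset{n_a-s-1}{\sum}}\phi\left(p_a^{i}\right)=\phi\left(\frac{n}{p_a^{n_a}p_b^t}\right)p_a^{n_a-s-1}$, which is the claim. For (ii) I would argue in exactly the same way, now using that $N_{a,b}^{s,t}$ is the disjoint union of the sets $E_{\frac{n}{p_a^sp_b^l}}$ with $t+1\leq l\leq n_b$ and interchanging the roles of $(p_a,s,n_a)$ and $(p_b,t,n_b)$.

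I do not expect any genuine obstacle; the statement is a short bookkeeping computation. The only points requiring a moment of care are verifying that the factorization of $\phi$ is legitimate — immediate from $p_a\neq p_b$ — and correctly reindexing the sum so that it becomes $\underset{i=0}{\overset{n_a-s-1}{\sum}}\phi(p_a^i)$, whose value is $p_a^{n_a-s-1}$.
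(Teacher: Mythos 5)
Your proof is correct and is essentially the argument the paper has in mind: the paper declares the proposition ``straightforward'' and gives no details, precisely because the disjoint-union descriptions of $M_{a,b}^{s,t}$ and $N_{a,b}^{s,t}$ stated just before it reduce everything to the same telescoping computation $\sum_{i=0}^{m}\phi(p^i)=p^m$ used in Lemmas \ref{lem-2.1} and \ref{lem-2.4}, which is exactly what you carried out.
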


\begin{proposition}\label{s-less}
If $s<n_a$, then the set $E_{\frac{n}{p_a^{s+1}}}$ is contained in $B$.
\end{proposition}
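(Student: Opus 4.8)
The plan is to argue by contradiction. By Proposition \ref{two-2-1} the set $E_{\frac{n}{p_a^{s+1}}}$ is contained either in $B$ or in $X$, so I assume $E_{\frac{n}{p_a^{s+1}}}\subseteq X$ and look for a contradiction. First I locate enough subsets of $X$. If $t<n_b$, then Proposition \ref{s-t-less} (applicable since $s<n_a$ and $t<n_b$) forces $E_{\frac{n}{p_b^{t+1}}}\subseteq B$; as $E_{\frac{n}{p_a^sp_b^t}}\subseteq A$ by the choice of $s,t$, and the greatest common divisor of $\frac{n}{p_a^sp_b^t}$ and $\frac{n}{p_b^{t+1}}$ equals $\frac{n}{p_a^sp_b^{t+1}}$, Proposition \ref{useful} gives $S_{\frac{n}{p_a^sp_b^{t+1}}}\subseteq X$ and hence $N_{a,b}^{s,t}\subseteq X$. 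If $t=n_b$, then $N_{a,b}^{s,t}=\emptyset$. In either case $X$ contains the pairwise disjoint sets $H_{a,b}^{s,t}$, $K_{a,b}^{s,t}\setminus L_{a,b}^{s,t}$, $N_{a,b}^{s,t}$ and $E_{\frac{n}{p_a^{s+1}}}$; since $N_{a,b}^{s,t}\subseteq L_{a,b}^{s,t}$ while the order $\frac{n}{p_a^{s+1}}$ is incomparable with $\frac{n}{p_a^sp_b^t}$ (so $E_{\frac{n}{p_a^{s+1}}}$ is disjoint from $X_{a,b}^{s,t}$), the sets $N_{a,b}^{s,t}$ and $E_{\frac{n}{p_a^{s+1}}}$ contribute $\left|N_{a,b}^{s,t}\right|+\left|E_{\frac{n}{p_a^{s+1}}}\right|$ elements of $X$ lying outside $H_{a,b}^{s,t}$ and $K_{a,b}^{s,t}\setminus L_{a,b}^{s,t}$. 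Proposition \ref{at-most-L} then yields $$\left|N_{a,b}^{s,t}\right|+\left|E_{\frac{n}{p_a^{s+1}}}\right|\leq\left|L_{a,b}^{s,t}\right|.$$

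Next I would compute both sides. Using Proposition \ref{L-a-b-s-t}, Proposition \ref{M-N-a-b-s-t}(ii), the identity $\left|E_{\frac{n}{p_a^{s+1}}}\right|=\phi\left(\frac{n}{p_a^{n_a}p_b^{n_b}}\right)\phi\left(p_a^{n_a-s-1}\right)\phi\left(p_b^{n_b}\right)$, and (when $t<n_b$) the elementary identity $p_b^{\,n_b-t-1}+\phi\left(p_b^{\,n_b-t}\right)=p_b^{\,n_b-t}$, the inequality above simplifies — after cancelling the common factor $\phi\left(\frac{n}{p_a^{n_a}p_b^{n_b}}\right)$ and writing $\alpha=n_a-s\ (\geq 1)$ — to $\phi\left(p_a^{\alpha-1}\right)\phi\left(p_b^{n_b}\right)\leq p_a^{\alpha-1}p_b^{\,n_b-t}$. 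But $\phi\left(p_b^{n_b}\right)=p_b^{\,n_b-1}\phi(p_b)$, and a short case analysis (treating $\alpha=1$ on its own and, for $\alpha\geq 2$, reducing to $(p_a-1)(p_b-1)p_b^{\,t-1}\leq p_a$ and using $p_a<p_b$, hence $p_b\geq 3$) shows that the reverse inequality $\phi\left(p_a^{\alpha-1}\right)\phi\left(p_b^{n_b}\right)\geq p_a^{\alpha-1}p_b^{\,n_b-t}$ always holds, with equality if and only if $\alpha\geq 2$, $p_a=2$, $p_b=3$ and $t=1$. Hence, unless we are in this exceptional configuration, we already have a contradiction.

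In the exceptional configuration, equality must hold throughout; in particular $|X|=\left|X_{a,b}^{s,t}\right|$ and, by the sharpness just used in Proposition \ref{at-most-L}, $$X=H_{a,b}^{s,t}\cup\left(K_{a,b}^{s,t}\setminus L_{a,b}^{s,t}\right)\cup N_{a,b}^{s,t}\cup E_{\frac{n}{p_a^{s+1}}},$$ where $p_a=2$ (so $a=1$), $p_b=3$, $t=1$ and $s+2\leq n_a$. I then \emph{walk up} the $p_a$-tower inside $A$. Since $\frac{n}{p_a^{s+2}p_b^t}$ divides $\frac{n}{p_a^sp_b^t}$, the set $E_{\frac{n}{p_a^{s+2}p_b^t}}$ is adjacent in $\mathcal{P}(C_n)$ to $E_{\frac{n}{p_a^sp_b^t}}\subseteq A$; a direct comparison of orders with the explicit form of $X$ above gives $E_{\frac{n}{p_a^{s+2}p_b^t}}\not\subseteq X$, and because $A\cup B$ is a separation of $\mathcal{P}(\overline{X})$ this forces $E_{\frac{n}{p_a^{s+2}p_b^t}}\subseteq A$. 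Likewise $\frac{n}{p_a^{s+2}p_b^t}$ divides $\frac{n}{p_a^{s+2}}$, so $E_{\frac{n}{p_a^{s+2}}}$ is adjacent to $E_{\frac{n}{p_a^{s+2}p_b^t}}\subseteq A$ and, being again disjoint from $X$, must lie in $A$. This contradicts Corollary \ref{elements-in-A}, since the common order $\frac{n}{p_a^{s+2}}$ of the elements of $E_{\frac{n}{p_a^{s+2}}}$ has full $p_b$-part.

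The step I expect to be the main obstacle is exactly this exceptional equality case $p_a=2,\ p_b=3,\ t=1$: there the counting estimate from Proposition \ref{at-most-L} is tight and gives nothing by itself, so one must use the rigidity forced by that equality to pin down $X$ completely and then manufacture, from the separation $A\cup B$, an element of $A$ whose order violates Corollary \ref{elements-in-A}.
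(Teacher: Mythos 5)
Your proposal is correct, and its skeleton matches the paper's: assume $E_{\frac{n}{p_a^{s+1}}}\subseteq X$, count against the minimum cut-set $X_{a,b}^{s,t}$, and kill the tight case by climbing the $p_a$-tower. The difference is in the counting step. The paper works with the finer set $M_{a,b}^{s,t}$: it shows $X\supseteq K_{a,b}^{s,t}\setminus M_{a,b}^{s,t}$ and deduces the sharper bound $\left|E_{\frac{n}{p_a^{s+1}}}\right|\leq\left|M_{a,b}^{s,t}\right|$, which fails strictly in every case except $s+1<n_a$, $t=n_b$, $(p_a,p_b,n_b)=(2,3,1)$. You instead combine Proposition \ref{at-most-L} with $N_{a,b}^{s,t}\subseteq L_{a,b}^{s,t}$ to get $\left|N_{a,b}^{s,t}\right|+\left|E_{\frac{n}{p_a^{s+1}}}\right|\leq\left|L_{a,b}^{s,t}\right|$; since $L_{a,b}^{s,t}$ also contains the sets $E_{\frac{n}{p_a^kp_b^l}}$ with $k\geq s+1$, $l\geq t+1$, this is a strictly weaker inequality, and accordingly your equality case is larger: $p_a=2$, $p_b=3$, $t=1$, $s+1<n_a$ with $n_b$ arbitrary, including configurations $t=1<n_b$ where the paper's sharper count already gives a strict contradiction ($\xi_1>0$). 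Your rigidity argument absorbs this extra case uniformly, and your arithmetic (reduction to $(p_a-1)(p_b-1)p_b^{t-1}\geq p_a$ with the stated equality condition) checks out. The endgame is also phrased differently: the paper notes $E_{\frac{n}{p_a^{s+2}}}$ must lie in $X$ or $B$ and in either case $X$ acquires elements beyond its pinned-down description, whereas you push $E_{\frac{n}{p_a^{s+2}p_b^t}}$ and then $E_{\frac{n}{p_a^{s+2}}}$ into $A$ and contradict Corollary \ref{elements-in-A}; these are mirror-image uses of the same separation facts, and both are valid. What the paper's choice of $M_{a,b}^{s,t}$ buys is a smaller exceptional case; what yours buys is that it reuses the already-stated Proposition \ref{at-most-L} rather than introducing a second decomposition, at the cost of a slightly longer tight-case analysis.
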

\begin{proof}
We have $p_a<p_b$ as $a<b$ by our assumption. By Proposition \ref{two-2-1}, the set $E_{\frac{n}{p_a^{s+1}}}$ is contained in either $B$ or $X$. If possible, suppose that $E_{\frac{n}{p_a^{s+1}}}$ is contained in $X$.

If $t=n_b$, then $J=[r]\setminus\{b\}$ and so $J\setminus\{a\}=[r]\setminus\{a,b\}$. In this case, the subgroups $S_{\frac{n}{p_ip_a^sp_b^t}}$, $i\in J\setminus\{a\}$, are contained in $X$ by Proposition \ref{elements-in-X}. If $t<n_b$, then the set $E_{\frac{n}{p_b^{t+1}}}$ is contained in $B$ by Proposition \ref{s-t-less}. Since $E_{\frac{n}{p_a^sp_b^t}}$ is contained in $A$, the subgroup $S_{\frac{n}{p_a^sp_b^{t+1}}}$ must be contained in $X$. In this case also, the subgroups $S_{\frac{n}{p_ip_a^sp_b^t}}$, $i\in J\setminus\{a\}$, are contained in $X$. Thus $X$ contains $K_{a,b}^{s,t}\setminus M_{a,b}^{s,t}$ in both cases.

The cut-set $X_{a,b}^{s,t}$ of $\mathcal{P}(C_n)$ is a disjoint union of $H_{a,b}^{s,t}$, $M_{a,b}^{s,t}$ and $K_{a,b}^{s,t}\setminus M_{a,b}^{s,t}$. Since $X$ is a minimum cut-set of $\mathcal{P}(C_n)$ containing  $H_{a,b}^{s,t}$ and $K_{a,b}^{s,t}\setminus M_{a,b}^{s,t}$, it follows that the number of elements of $X$ which do not belong to $H_{a,b}^{s,t}$ or $K_{a,b}^{s,t}\setminus M_{a,b}^{s,t}$ is at most $\left|M_{a,b}^{s,t}\right|$. In particular, as $E_{\frac{n}{p_a^{s+1}}}$ is disjoint from the cut-set $X_{a,b}^{s,t}$, we must have $\left|E_{\frac{n}{p_a^{s+1}}}\right|\leq \left|M_{a,b}^{s,t}\right|$, that is,
\begin{equation}\label{ineq-E-M}
\xi_1:=\left|E_{\frac{n}{p_a^{s+1}}}\right| - \left|M_{a,b}^{s,t}\right|\leq 0.
\end{equation}
We have $\left|M_{a,b}^{s,t}\right|=\phi\left(\frac{n}{p_a^{n_a}p_b^{t}}\right) p_a^{n_a-s-1}$ by Proposition \ref{M-N-a-b-s-t}(i) and $\left|E_{\frac{n}{p_a^{s+1}}}\right|=\phi\left( \frac{n}{p_a^{s+1}}\right)=\phi\left(\frac{n}{p_a^{n_a}p_b^{n_b}}\right)\phi\left(p_a^{n_a-s-1}p_b^{n_b}\right)$. So
$$\xi_1 =\phi\left(\frac{n}{p_a^{n_a}p_b^{n_b}}\right) \left[\phi\left(p_a^{n_a-s-1}p_b^{n_b}\right) - p_a^{n_a-s-1}\phi\left(p_b^{n_b-t}\right)\right].$$

We consider four different cases depending on $s+1=n_a$ or $s+1<n_a$, and $t=n_b$ or $t<n_b$.
If $s+1 < n_a$ and $t< n_b$, then $\xi_1 =\phi\left(\frac{n}{p_a^{n_a}p_b^{n_b}}\right) \times p_a^{n_a-s-2}p_b^{n_b-t-1}\times \phi(p_b)\left[p_b^t\phi(p_a)-p_a\right]>0$.
If $s+1 = n_a$ and $t< n_b$, then $\xi_1 =\phi\left(\frac{n}{p_a^{n_a}p_b^{n_b}}\right) \left[\phi\left(p_b^{n_b}\right) - \phi\left(p_b^{n_b-t}\right)\right]>0$.
If $s+1 = n_a$ and $t =n_b$, then $p_b\geq 3$ gives $\xi_1 =\phi\left(\frac{n}{p_a^{n_a}p_b^{n_b}}\right) \left[\phi\left(p_b^{n_b}\right) - 1 \right]>0$. In these three cases, we get a contradiction to (\ref{ineq-E-M}).

Now consider $s+1 < n_a$ and $t= n_b$. Then $\xi_1 =\phi\left(\frac{n}{p_a^{n_a}p_b^{n_b}}\right) \left[\phi\left(p_a^{n_a-s-1}p_b^{n_b}\right) - p_a^{n_a-s-1}\right]\geq 0$,
with equality if and only if $(p_a,p_b,t)=(p_a,p_b,n_b)=(2,3,1)$. Thus we get a contradiction to (\ref{ineq-E-M}) if $(p_a,p_b,n_b)\neq (2,3,1)$. Assume that $(p_a,p_b,n_b) =(2,3,1)$. Then $\xi_1=0$, giving $\left|E_{\frac{n}{p_a^{s+1}}}\right| = \left|M_{a,b}^{s,t}\right|$. So
\begin{align*}
|X| & \geq \left|H_{a,b}^{s,t}\right| + \left|K_{a,b}^{s,t}\setminus M_{a,b}^{s,t}\right|+\left|E_{\frac{n}{p_a^{s+1}}}\right|=\left|H_{a,b}^{s,t}\right| + \left|K_{a,b}^{s,t}\setminus M_{a,b}^{s,t}\right|+\left|M_{a,b}^{s,t}\right| =\left|X_{a,b}^{s,t}\right|.
\end{align*}
By the minimality of $|X|$, we must have $X =H_{a,b}^{s,t} \bigcup \left(K_{a,b}^{s,t}\setminus M_{a,b}^{s,t}\right) \bigcup E_{\frac{n}{p_a^{s+1}}}$.

On the other hand, as $s+2\leq n_a$, the set $E_{\frac{n}{p_a^{s+2}}}$ is contained in either $X$ or $B$ by Proposition \ref{two-2-1}. If the latter holds, then the subgroup $S_{\frac{n}{p_a^{s+2}p_b}}$ would be contained in $X$. In both cases, we find that $X$ contains additional elements of order $\frac{n}{p_a^{s+2}}$ or $\frac{n}{p_a^{s+2}p_b}$ not belonging to any of the sets $H_{a,b}^{s,t}$, $K_{a,b}^{s,t}\setminus M_{a,b}^{s,t}$ and $E_{\frac{n}{p_a^{s+1}}}$, a final contradiction.
\end{proof}

\begin{proposition}\label{t-less}
If $t<n_b$, then the set $E_{\frac{n}{p_b^{t+1}}}$ is contained in $B$.
\end{proposition}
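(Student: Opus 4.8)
The plan is to run exactly the argument of Proposition~\ref{s-less}, but with the set $N_{a,b}^{s,t}$ playing the role of $M_{a,b}^{s,t}$; here the inequality $p_a<p_b$ (valid since $a<b$) works in our favour. By Proposition~\ref{two-2-1}, the set $E_{\frac{n}{p_b^{t+1}}}$ is contained in $B$ or in $X$, so suppose for contradiction that $E_{\frac{n}{p_b^{t+1}}}\subseteq X$.

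First I would check that $X$ contains $K_{a,b}^{s,t}\setminus N_{a,b}^{s,t}$, that is, the subgroups $S_{\frac{n}{p_ip_a^sp_b^t}}$ for all $i\in J\setminus\{b\}$. If $s=n_a$, then $J\setminus\{b\}=[r]\setminus\{a,b\}$ and these subgroups lie in $X$ by Proposition~\ref{elements-in-X}. If $s<n_a$, then $J\setminus\{b\}=[r]\setminus\{b\}$; the subgroups indexed by $[r]\setminus\{a,b\}$ lie in $X$ by Proposition~\ref{elements-in-X}, and the remaining one, $S_{\frac{n}{p_a^{s+1}p_b^t}}$, lies in $X$ because $E_{\frac{n}{p_a^{s+1}}}\subseteq B$ by Proposition~\ref{s-less} while $E_{\frac{n}{p_a^sp_b^t}}\subseteq A$, so Proposition~\ref{useful} applies (the greatest common divisor of $\frac{n}{p_a^{s+1}}$ and $\frac{n}{p_a^sp_b^t}$ is $\frac{n}{p_a^{s+1}p_b^t}$). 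Since $X_{a,b}^{s,t}$ is the disjoint union of $H_{a,b}^{s,t}$, $N_{a,b}^{s,t}$ and $K_{a,b}^{s,t}\setminus N_{a,b}^{s,t}$, and $X$ is a minimum cut-set containing $H_{a,b}^{s,t}$ and $K_{a,b}^{s,t}\setminus N_{a,b}^{s,t}$, Corollary~\ref{cor-X-a-b-s-t} forces the number of elements of $X$ lying outside $H_{a,b}^{s,t}\cup\left(K_{a,b}^{s,t}\setminus N_{a,b}^{s,t}\right)$ to be at most $\left|N_{a,b}^{s,t}\right|$.

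Next I would observe that $E_{\frac{n}{p_b^{t+1}}}$ is disjoint from $X_{a,b}^{s,t}$: an element of order $\frac{n}{p_b^{t+1}}$ has full $p_a$-part, so its order is not of the form $\frac{n}{p_a^ip_b^j}$ with $j\le t$ and does not divide $\frac{n}{p_a^sp_b^t}$ (as $s\ge 1$). Hence the assumption $E_{\frac{n}{p_b^{t+1}}}\subseteq X$ gives $\left|E_{\frac{n}{p_b^{t+1}}}\right|\le\left|N_{a,b}^{s,t}\right|$, i.e.
\[
\xi_2:=\left|E_{\frac{n}{p_b^{t+1}}}\right|-\left|N_{a,b}^{s,t}\right|\le 0.
\]
Using $\left|E_{\frac{n}{p_b^{t+1}}}\right|=\phi\!\left(\frac{n}{p_a^{n_a}p_b^{n_b}}\right)\phi\!\left(p_a^{n_a}p_b^{n_b-t-1}\right)$ and $\left|N_{a,b}^{s,t}\right|=\phi\!\left(\frac{n}{p_a^{n_a}p_b^{n_b}}\right)\phi\!\left(p_a^{n_a-s}\right)p_b^{n_b-t-1}$ (Proposition~\ref{M-N-a-b-s-t}(ii)), one obtains
\[
\xi_2=\phi\!\left(\frac{n}{p_a^{n_a}p_b^{n_b}}\right)\Big[\phi\!\left(p_a^{n_a}p_b^{n_b-t-1}\right)-\phi\!\left(p_a^{n_a-s}\right)p_b^{n_b-t-1}\Big].
\]

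Finally I would prove that the bracket is strictly positive, splitting into four cases according to whether $s=n_a$ or $s<n_a$ and whether $t+1=n_b$ or $t+1<n_b$. In each case, pulling out the largest common power of $p_a$ and $p_b$ reduces the bracket to a positive factor times a quantity bounded below by $p_b-2\ge 1$ (when $t+1<n_b$) or by $\phi(p_a^{n_a})-1$ or $p_a^{s}-1$ (when $t+1=n_b$); the inputs are $p_b>p_a\ge 2$ and, crucially in the sub-cases $s=n_a$ where $\phi(p_a^{n_a-s})=\phi(1)=1$, the fact that $(p_a,n_a)\ne(2,1)$ by Proposition~\ref{not-(2,1)} (applicable since $t<n_b$), which gives $\phi(p_a^{n_a})\ge 2$. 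Hence $\xi_2>0$, contradicting the displayed inequality, and therefore $E_{\frac{n}{p_b^{t+1}}}\subseteq B$. The only delicate point is the case $s=n_a$, which is exactly where Proposition~\ref{not-(2,1)} is needed; unlike in Proposition~\ref{s-less}, no borderline situation $\xi_2=0$ occurs, so no extra argument is required.
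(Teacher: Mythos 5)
Your proposal is correct and follows essentially the same route as the paper's proof: you establish that $X$ contains $K_{a,b}^{s,t}\setminus N_{a,b}^{s,t}$ (using Proposition~\ref{elements-in-X} when $s=n_a$ and Proposition~\ref{s-less} when $s<n_a$), deduce $\left|E_{\frac{n}{p_b^{t+1}}}\right|\leq\left|N_{a,b}^{s,t}\right|$ from the minimality of $X$, and refute this via the same four-case positivity analysis of $\xi_2$, with Proposition~\ref{not-(2,1)} handling the sub-cases $s=n_a$. Your added remarks (the explicit appeal to Proposition~\ref{useful} and the check that $E_{\frac{n}{p_b^{t+1}}}$ is disjoint from $X_{a,b}^{s,t}$) only make explicit what the paper leaves implicit, and your observation that no borderline case $\xi_2=0$ arises matches the paper.
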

\begin{proof}
We shall apply a similar argument as used in the proof of Proposition \ref{s-less}. By Proposition \ref{two-2-1}, the set $E_{\frac{n}{p_b^{t+1}}}$ is contained in either $B$ or $X$. If possible, suppose that $E_{\frac{n}{p_b^{t+1}}}$ is contained in $X$.

If $s=n_a$, then $J=[r]\setminus\{a\}$ and so $J\setminus\{b\}=[r]\setminus\{a,b\}$. In this case, the subgroups $S_{\frac{n}{p_ip_a^sp_b^t}}$, $i\in J\setminus\{b\}$, are contained in $X$ by Proposition \ref{elements-in-X}.
If $s<n_a$, then the set $E_{\frac{n}{p_a^{s+1}}}$ is contained in $B$ by Proposition \ref{s-less} and so the subgroup $S_{\frac{n}{p_a^{s+1}p_b^{t}}}$ must be contained in $X$. In this case also, the subgroups $S_{\frac{n}{p_ip_a^sp_b^t}}$, $i\in J\setminus\{b\}$, are contained in $X$. Thus $X$ contains $K_{a,b}^{s,t}\setminus N_{a,b}^{s,t}$ in both cases.

The cut-set $X_{a,b}^{s,t}$ of $\mathcal{P}(C_n)$ is a disjoint union of $H_{a,b}^{s,t}$, $N_{a,b}^{s,t}$ and $K_{a,b}^{s,t}\setminus N_{a,b}^{s,t}$. Since $X$ is a minimum cut-set of $\mathcal{P}(C_n)$ containing $H_{a,b}^{s,t}$ and $K_{a,b}^{s,t}\setminus N_{a,b}^{s,t}$, it follows that the number of elements of $X$ which do not belong to $H_{a,b}^{s,t}$ or $K_{a,b}^{s,t}\setminus N_{a,b}^{s,t}$ is at most $\left|N_{a,b}^{s,t}\right|$. In particular, as $E_{\frac{n}{p_b^{t+1}}}$ is disjoint from the cut-set $X_{a,b}^{s,t}$, we must have $\left|E_{\frac{n}{p_b^{t+1}}}\right|\leq \left|N_{a,b}^{s,t}\right|$, that is,
\begin{equation}\label{ineq-E-N}
\xi_2:=\left|E_{\frac{n}{p_b^{t+1}}}\right| - \left|N_{a,b}^{s,t}\right| \leq 0.
\end{equation}
We have $\left|N_{a,b}^{s,t}\right|=\phi\left(\frac{n}{p_a^{s}p_b^{n_b}}\right) p_b^{n_b-t-1}$ by Proposition \ref{M-N-a-b-s-t}(ii) and $\left|E_{\frac{n}{p_b^{t+1}}}\right|= \phi\left(\frac{n}{p_b^{t+1}}\right) =\phi\left(\frac{n}{p_a^{n_a}p_b^{n_b}}\right)\phi\left(p_a^{n_a}p_b^{n_b-t-1}\right)$. So
$$\xi_2 =\phi\left(\frac{n}{p_a^{n_a}p_b^{n_b}}\right) \left[\phi\left(p_a^{n_a}p_b^{n_b-t-1}\right) - \phi\left(p_a^{n_a-s}\right)p_b^{n_b-t-1}\right]. $$
We show that $\xi_2 > 0$, thus getting a contradiction to (\ref{ineq-E-N}).

Since $t<n_b$, we have $(p_a,n_a)\neq (2,1)$ by Proposition \ref{not-(2,1)}. We consider four different cases depending on $s=n_a$ or $s<n_a$, and $t+1=n_b$ or $t+1<n_b$.

If $s < n_a$ and $t+1< n_b$, then $\xi_2 =\phi\left(\frac{n}{p_a^{n_a}p_b^{n_b}}\right) \times p_a^{n_a-s-1}p_b^{n_b-t-2}\times \phi(p_a)\left[p_a^s\phi(p_b)-p_b\right]>0$.
If $s < n_a$ and $t+1 = n_b$, then $\xi_2 =\phi\left(\frac{n}{p_a^{n_a}p_b^{n_b}}\right) \left[\phi\left(p_a^{n_a}\right) - \phi\left(p_a^{n_a-s}\right)\right]>0$.
If $s = n_a$ and $t+1=n_b$, then $\xi_2 =\phi\left(\frac{n}{p_a^{n_a}p_b^{n_b}}\right) \left[\phi\left(p_a^{n_a}\right) - 1 \right] > 0$ as $(p_a,n_a)\neq (2,1)$.
If $s= n_a$ and $t+1 < n_b$, then $\xi_2 =\phi\left(\frac{n}{p_a^{n_a}p_b^{n_b}}\right) \left[\phi\left(p_a^{n_a}p_b^{n_b-t-1}\right) - p_b^{n_b-t-1}\right]> 0$ as $(p_a,n_a)\neq (2,1)$.
\end{proof}

\begin{proposition}
$X=X_{a,b}^{s,t}$.
\end{proposition}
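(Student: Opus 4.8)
\emph{Proof proposal.} The plan is to show that $X$ contains the entire cut-set $X_{a,b}^{s,t}=H_{a,b}^{s,t}\cup K_{a,b}^{s,t}$; since $X_{a,b}^{s,t}$ is a cut-set of $\mathcal{P}(C_n)$ by Proposition \ref{prop-X-a-b-s-t} and $X$ is a minimum cut-set, equality then follows at once. By Corollary \ref{s=n-a t=n-b} we may assume $(s,t)\neq(n_a,n_b)$. Proposition \ref{elements-in-X} already gives $H_{a,b}^{s,t}\subseteq X$ together with the subgroups $S_{\frac{n}{p_ip_a^sp_b^t}}\subseteq X$ for all $i\in[r]\setminus\{a,b\}$, so (recalling the description of $K_{a,b}^{s,t}$ as the union of $S_{\frac{n}{p_ip_a^sp_b^t}}$ over $i\in J$) the only work left is to place the subgroup $S_{\frac{n}{p_a^{s+1}p_b^t}}$ inside $X$ when $a\in J$, i.e.\ when $s<n_a$, and the subgroup $S_{\frac{n}{p_a^sp_b^{t+1}}}$ inside $X$ when $b\in J$, i.e.\ when $t<n_b$.

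For the first of these, suppose $s<n_a$. By definition of $s$, the set $E_{\frac{n}{p_a^sp_b^t}}$ lies in $A$, while by Proposition \ref{s-less} the set $E_{\frac{n}{p_a^{s+1}}}$ lies in $B$. Thus $A$ contains an element of order $\frac{n}{p_a^sp_b^t}$ and $B$ contains an element of order $\frac{n}{p_a^{s+1}}$; comparing prime-power factorizations, the greatest common divisor of these two orders is $\frac{n}{p_a^{s+1}p_b^t}$ (the exponent of $p_a$ drops to $n_a-s-1$, the exponent of $p_b$ drops to $n_b-t$, and all other exponents stay $n_i$). Proposition \ref{useful} then forces $S_{\frac{n}{p_a^{s+1}p_b^t}}\subseteq X$. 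Symmetrically, if $t<n_b$, then $E_{\frac{n}{p_a^sp_b^t}}\subseteq A$ and, by Proposition \ref{t-less}, $E_{\frac{n}{p_b^{t+1}}}\subseteq B$, and the gcd of $\frac{n}{p_a^sp_b^t}$ and $\frac{n}{p_b^{t+1}}$ is $\frac{n}{p_a^sp_b^{t+1}}$, so Proposition \ref{useful} gives $S_{\frac{n}{p_a^sp_b^{t+1}}}\subseteq X$.

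Combining these with the subgroups already supplied by Proposition \ref{elements-in-X}, one checks against the three cases in the definition of $J$ that $X$ contains $S_{\frac{n}{p_ip_a^sp_b^t}}$ for every $i\in J$: when $s=n_a$, $t<n_b$ the extra index is $i=b$; when $s<n_a$, $t=n_b$ it is $i=a$; and when $s<n_a$, $t<n_b$ both $i=a$ and $i=b$ are covered. Hence $K_{a,b}^{s,t}\subseteq X$, so $X_{a,b}^{s,t}\subseteq X$, and minimality of $X$ (using Corollary \ref{cor-X-a-b-s-t}) yields $X=X_{a,b}^{s,t}$. The only mildly delicate point is the case bookkeeping against $J$ and the two gcd computations; everything substantive has already been absorbed into Propositions \ref{s-less} and \ref{t-less}, so I expect no real obstacle here.
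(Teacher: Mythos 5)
Your proposal is correct and follows essentially the same route as the paper: reduce via Corollary \ref{s=n-a t=n-b}, use Proposition \ref{elements-in-X} for $H_{a,b}^{s,t}$ and the subgroups indexed by $[r]\setminus\{a,b\}$, and then use Propositions \ref{s-less} and \ref{t-less} to force the two remaining subgroups $S_{\frac{n}{p_a^{s+1}p_b^t}}$ and $S_{\frac{n}{p_a^sp_b^{t+1}}}$ into $X$, concluding by minimality. Your explicit gcd computations via Proposition \ref{useful} are exactly the step the paper leaves implicit in its phrase ``it then follows,'' so there is no substantive difference.
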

\begin{proof}
The set $E_{\frac{n}{p_a^sp_b^t}}$ is contained in $A$ and the sets $E_{\frac{n}{p_i}}$, $i\in [r]\setminus\{a,b\}$, are contained in $B$. By Corollary \ref{s=n-a t=n-b}, we may assume that $(s,t)\neq (n_a,n_b)$. If $s<n_a$, then $E_{\frac{n}{p_a^{s+1}}}$ is contained in $B$ by Proposition \ref{s-less}. If $t<n_b$, then $E_{\frac{n}{p_b^{t+1}}}$ is contained in $B$ by Proposition \ref{t-less}. It then follows that the subgroups $S_{\frac{n}{p_ip_a^sp_b^t}}$ for $i\in J$ and hence the set $K_{a,b}^{s,t}$ must be contained in $X$. By Proposition \ref{elements-in-X}, $H_{a,b}^{s,t}$ is contained in $X$. Thus the cut-set $X_{a,b}^{s,t}$ of $\mathcal{P}(C_n)$ is contained in $X$. Since $X$ is a minimum cut-set of $\mathcal{P}(C_n)$, we must have $X=X_{a,b}^{s,t}$.
\end{proof}

\textbf{Acknowledgement:}

The first author was supported by the Council of Scientific and Industrial Research grant no. 09/1248(0004)/2019-EMR-I, Ministry of Human Resource Development, Government of India.

\vskip .5cm

\noindent{\bf Addresses}: Sanjay Mukherjee, Kamal Lochan Patra, Binod Kumar Sahoo

\begin{enumerate}
\item[1)] School of Mathematical Sciences\\
National Institute of Science Education and Research (NISER), Bhubaneswar\\
P.O.- Jatni, District- Khurda, Odisha--752050, India

\item[2)] Homi Bhabha National Institute (HBNI)\\
Training School Complex, Anushakti Nagar, Mumbai--400094, India
\end{enumerate}

\noindent {\bf E-mails}: sanjay.mukherjee@niser.ac.in, klpatra@niser.ac.in, bksahoo@niser.ac.in
\end{document}